\newtheorem{thm}{Theorem}[section]
\newtheorem{prop}[thm]{Proposition}
\newtheorem{lemma}[thm]{Lemma}
\newtheorem{cor}[thm]{Corollary}
\theoremstyle{definition}
\newtheorem{defn}[thm]{Definition}
\newtheorem{question}[thm]{Question}
\theoremstyle{remark}
\newtheorem{ex}[thm]{Example}
\newtheorem{rem}[thm]{Remark}
\newcommand{\ga}{\Gamma}
\newcommand{\e}{\varepsilon}
\renewcommand{\d}{\mathrm{d}}
\newcommand{\tr}{{\rm tr}}
\DeclareMathOperator{\comm}{Comm}
\newcommand{\cat}{{\rm CAT(0)}}
\newcommand{\cN}{\mathcal{N}}
\newcommand{\cA}{\mathcal{A}}
\newcommand{\cB}{\mathcal{B}}
\newcommand{\cC}{\mathcal{C}}
\newcommand{\cM}{\mathcal{M}}
\newcommand{\cL}{\mathcal{L}}
\newcommand{\tildephi}{{\tilde\phi}}
\newcommand{\A}{\mathbb{A}}
\newcommand{\E}{\mathbb{E}}
\newcommand{\N }{\mathbb{N}}
\newcommand{\T}{\mathbb{T}}
\newcommand{\isom}{\mathrm{Isom}}
\def\coloneq{\mathrel{\mathop\mathchar"303A}\mkern-1.2mu=}
\begin{document}

\keyword{commensurating HNN-extension}
\keyword{biautomatic group}
\keyword{\cat{} group}
\subject{primary}{MSC2000}{20F67}
\subject{secondary}{MSC2000}{20F10}
\subject{secondary}{MSC2000}{20E06}

\title{Commensurating HNN-extensions: non-positive curvature
and biautomaticity}

\author{Ian J. Leary}
\author{Ashot Minasyan}
\address{CGTA, School of Mathematical Sciences,
University of Southampton, Highfield, Southampton, SO17~1BJ, United
Kingdom.}
\email{I.J.Leary@soton.ac.uk, aminasyan@gmail.com}

\begin{abstract}
We show that the commensurator of any quasiconvex abelian subgroup
in a biautomatic group is small, in the sense that it has
finite image in the abstract commensurator of the subgroup.  Using this
criterion we exhibit groups that are \cat{} but not biautomatic.
These groups also resolve a number of other questions concerning
\cat{} groups.
\end{abstract}

\maketitle

\section{Introduction}
The theory of automatic and biautomatic groups was developed in the late
1980's, and described in the book \cite{Eps} written by Epstein, Cannon, Holt, Levy, Paterson and Thurston.  The question of whether
there are any automatic groups that are not biautomatic appears in
this book as~\cite[Question~2.5.6]{Eps} and as Remark~6.19 at the end
of the paper of Gersten and Short \cite{G-S} in which biautomatic
groups were introduced.
By that time it had been shown that hyperbolic groups
are biautomatic (implicit in \cite[Theorem~3.4.5]{Eps}).
The definition of biautomaticity has both language-theoretic
and geometric aspects, whose interaction is non-trivial.
This motivated Alonso and Bridson to introduce, in the early
1990's, the geometric class of semihyperbolic groups~\cite{alobri}.
This class contains all biautomatic groups and all \cat{} groups.
In the mid 1990's Niblo and Reeves  proved that \cat{}
cubical groups are biautomatic~\cite{nibree}.  The question of whether
all \cat{} groups are automatic or even biautomatic must have been a
motivating question for much of the above work, but the earliest written
versions that we were able to find were in the PhD thesis of Elder \cite[Open Question~2]{Elder}, written in 2000, and in a problem list compiled
by McCammond after the American Institute of Mathematics meeting
`Problems in Geometric Group Theory' April 23--27,
2007~\cite[Question~13]{mccammond} (see also \cite[Section~6.6]{FHT}).
We answer the latter question by constructing the first examples
of \cat{} groups that are not biautomatic.

The groups that we construct are higher-dimensional analogues of
Baumslag-Solitar groups \cite{Baum-Sol}, in the sense that they are HNN-extensions
of free abelian groups of rank greater than one in which the stable letter
conjugates two finite-index subgroups.  Consider the group $G_P$
given by the presentation
\[G_P \coloneq \langle a,b,t \,\|\, [a,b]=1,\,\, ta^2b^{-1}t^{-1}=a^2b,\,\,
tab^2t^{-1}= a^{-1}b^2 \rangle.\]
This is an HNN-extension of  $L=\langle a,b\rangle \cong \Z^2$
in which the stable letter conjugates two subgroups of index five.
If we let $L$ act on the Euclidean plane $\E^2$ in such a way that
$a$~and~$b$ act as translations of length one in orthogonal directions,
the elements $a^2b^{-1}$ and $ab^2$ act as translations of length
$\sqrt{5}$ in orthogonal directions, as do the elements
$a^2b$~and~$a^{-1}b^2$.  The action of $\langle a,b\rangle$ on $\E^2$
extends to an action of the whole group $G_P$, in which the stable
letter acts as a rotation through $\arccos(3/5)$.  The fact that this
action is by isometries is what ensures that $G_P$ is \cat{}, while
the fact that the rotation through $\arccos(3/5)$ has infinite order
is what allows us to show that $G_P$ is not virtually biautomatic (i.e., no finite-index subgroup is biautomatic).

The action of $G_P$ on the Euclidean plane can be used to show that
for any $n\neq 0$, the subgroup $\langle a^n,t^n\rangle<G_P$ is not
abelian.  But also $\langle a^n,t^n\rangle$ contains a finite-index
subgroup of $\langle a,b\rangle$, and so it cannot be a free group (see Corollary~\ref{cor:wise} below).
Thus the elements $a,t\in G_P$ give the first negative answer to a
question of Wise concerning a strong version of the Tits Alternative
for \cat{} groups~\cite[Question~2.7]{Bestvina-list}.

The Bass-Serre tree $T$ for $G_P$ is a regular tree of valency $10$, and
we show that $G_P$ acts geometrically on the direct product $\E^2\times T$.
For $F$ a free group of rank 5, the group $\Z^2\times F$ acts geometrically
on the same \cat{} space as $G_P$, and it follows that these groups are
quasi-isometric to each other. Hence any property that is
not shared by $\Z^2\times F$ and the group $G_P$ cannot be
invariant under quasi-isometry, even amongst \cat{} groups.  In
particular there are \cat{} groups that are quasi-isometric to
$\Z^2\times F$ but are not virtually biautomatic, and hence cannot
be virtually cubical. In fact, by a recent result of Huang and Prytu\l{a}
\cite{huangprytula}, no finite-index subgroup of $G_P$ admits a proper action on a finite-dimensional \cat{} cube complex by cubical automorphisms (because no positive power of $t$ normalizes a subgroup of finite index in the abelian base group $L$).

Note that although $G_P$ is quasi-isometric to $\Z^2\times F$ it is
not commensurable to it, and so $\Z^2\times F$ is not quasi-isometrically
rigid, contrary to some claims in the existing literature.
Moreover, $G_P$ embeds as an
irreducible lattice in the group of isometries of $\E^2\times T$.
There has been some confusion concerning this property in the
literature: in particular \cite{capmon} claims that no such lattices
exist (this has recently been rectified in \cite{capmon-corr}).

By varying the geometry of the free abelian subgroup, one can
construct similar examples in which the indices of the subgroups
conjugated by the stable letter are smaller.  Consider
the groups $G_{k,2}$ for $k\in\Z$ with presentations
\[G_{k,2}\coloneq\langle a,b,t \,\|\, [a,b]=1,\,\, tat^{-1}=b,\,\,
tb^2 t^{-1}= a^{-2}b^k \rangle.\]
In $G_{k,2}$, the stable letter conjugates two index $2$ subgroups
of $\langle a,b\rangle \cong \Z^2$, and since $tat^{-1}=b$, this relator and
the generator $b$ can be eliminated, giving a presentation of
$G_{k,2}$ with just two generators and two relators.
We show that $G_{k,2}$ is \cat{} if and only if $-3\leq k\leq 3$
and that $G_{k,2}$ is biautomatic if and only if
$k\in\{-2,0,2\}$.  Thus the groups $G_{k,2}$ for $k\in\{-3,-1,1,3\}$
are \cat{} but not biautomatic, and the elements $a,t\in G_{k,2}$
give counterexamples to Wise's question for these values of $k$
too~\cite[Question~2.7]{Bestvina-list}.

Although our main examples arise already for base groups free abelian
of rank $2$, we consider commensurating HNN-extensions of free
abelian groups of arbitrary finite rank.  Such a group is
described by a pair $L',L''$ of finite-index subgroups of
$L=\Z^n$, together with a matrix $A\in GL(n,\Q)$ such that
multiplication by $A$ defines an isomorphism
$A\times:L'\to L''$.  We denote this group by $G(A,L')$, because
$L''=A\,L'$ is determined by the pair $(A,L')$.
Many of the results that we obtain
concerning these groups are summarized in the following
theorem.

\begin{thm}\label{thm:ashtalk}
Let $G=G(A,L')$ be a group defined above. Then
\begin{enumerate}
  \item\label{claim:1}  $G$ is residually finite $\Leftrightarrow $
 $G$ is linear $\Leftrightarrow $  either
$L'=L$ or $A\,L'=L$ or $A$ is conjugate in $GL(n,\Q)$ to
    an element of $GL(n,\Z)$;
    \item\label{claim:2} $G$ is \cat{} $\Leftrightarrow $
$A$ is conjugate in $GL(n,\R)$ to an orthogonal matrix;
    \item\label{claim:3}  $G$ is biautomatic $\Leftrightarrow $  $G$ is virtually biautomatic
 $\Leftrightarrow $ $A$ has finite order.
\end{enumerate}

\end{thm}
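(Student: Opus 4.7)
The three equivalences are largely independent; I would address (\ref{claim:1}) and (\ref{claim:2}) via direct constructions and classical tools, and save the deepest work for (\ref{claim:3}), which hinges on the main commensurator criterion announced in the abstract.

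For (\ref{claim:1}), ``linear $\Rightarrow$ residually finite'' is Mal'cev's theorem. For ``condition $\Rightarrow$ linear'' I would exhibit a linear embedding in each case: $L'=L$ makes $G$ an ascending HNN-extension that embeds in the metabelian linear group $\mathbb{Z}[1/\det A]^n\rtimes_A\mathbb{Z}$; the case $AL'=L$ is symmetric via $t\mapsto t^{-1}$; and if $A$ preserves a rational lattice then after conjugation $A\in GL(n,\mathbb{Z})$ and $G$ embeds in an explicit arithmetic group built from this lattice together with the index data of $L'$. For the converse implication ``residually finite $\Rightarrow$ one of the three conditions'' I would argue contrapositively: when all three fail, the absence of an $A$-invariant rational lattice combined with the strict inclusions $L'\subsetneq L$ and $AL'\subsetneq L$ yields a Baumslag--Solitar-type subgroup $BS(m,n)$ with $|m|\neq|n|$ (and both $\geq 2$) inside $G$, which by Meskin is not residually finite.

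For (\ref{claim:2}), the backward direction generalises the $G_P$ construction from the introduction: conjugating $A$ to an orthogonal matrix $B$, let $L$ act on $\E^n$ by translations (in the basis in which $B$ is orthogonal) and let $t$ act as $B$ composed with a translation aligning a fundamental domain for $L$ with one for $AL$; the induced $G$-action on $\E^n\times T$, where $T$ is the Bass--Serre tree, is proper and cocompact. For the forward direction, given a geometric action of $G$ on a \cat{} space $X$, the Flat Torus Theorem provides an $L$-invariant $n$-flat $F\subset X$; the commensurating element $t$ permutes the flats parallel to $F$ inside the $L$-minimal set and induces an affine isometry of $F$ whose linear part, read in the basis given by the translation generators of $L$, is exactly $A$. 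Since linear parts of affine isometries of Euclidean space are orthogonal, $A$ is conjugate in $GL(n,\mathbb{R})$ to an orthogonal matrix.

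For (\ref{claim:3}) I would prove the cycle biautomatic $\Rightarrow$ virtually biautomatic $\Rightarrow$ $A$ has finite order $\Rightarrow$ biautomatic. The first implication is trivial. The second, the heart of the argument, applies the commensurator criterion to a biautomatic finite-index $G_1\leq G$: one verifies that $L\cap G_1$ is a quasi-convex abelian subgroup of $G_1$, and that $t^N\in G_1$ for some $N\geq 1$ commensurates $L\cap G_1$ with induced action $A^N\in\comm(L\cap G_1)=GL(n,\mathbb{Q})$; the criterion forces $A^N$, and hence $A$, to have finite order. For the third implication, setting $k=|A|$, the kernel $K$ of the homomorphism $G\to\langle A\rangle$ sending $t\mapsto A$ and $L\mapsto I$ is normal of index $k$ in $G$ and decomposes by Bass--Serre as the fundamental group of a finite cycle of $\mathbb{Z}^n$-vertex groups with finite-index edge-group inclusions and trivial monodromy; such a tubular graph of abelian groups acts properly cocompactly on a \cat{} cube complex, and is biautomatic by Niblo--Reeves, whence biautomaticity transfers from $K$ to the finite normal extension $G$. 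The main obstacle I anticipate is the quasi-convexity of $L$ in an arbitrary biautomatic structure on $G$ (or $G_1$), which is not automatic for abelian subgroups of biautomatic groups and is crucial to applying the commensurator criterion; a secondary difficulty is arranging an $\langle A\rangle$-invariant cubical wall-structure on $\E^n\times T$ when $\langle A\rangle$ does not preserve the standard cube complex on $\mathbb{R}^n$.
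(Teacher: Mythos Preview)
Your outline has the right shape, but several steps invoke results that do not exist or do not apply.

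The fatal gap is in (\ref{claim:3}), ``$A$ has finite order $\Rightarrow$ biautomatic''. Your argument shows only that the finite-index subgroup $K$ is biautomatic and then asserts that biautomaticity transfers to the finite extension $G$. That transfer is an \emph{open problem} (Epstein et al., Open Question~4.1.5); the paper explicitly flags this and proves the implication differently. Instead one embeds $G$ as a subdirect product of a finitely generated virtually abelian group $H$ (the image under the affine action on $\Q^n$) and a virtually free group $F=G/M$, where $M$ is the $\langle t\rangle$-core of $L'$; a short lemma shows any subdirect product of such $H$ and a biautomatic $F$ is biautomatic. Your cubulation sketch is thus both unnecessary and, because of the transfer step, insufficient.

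Two further gaps. In (\ref{claim:2}) forward, your sentence ``$t$ permutes the flats parallel to $F$ inside the $L$-minimal set'' presumes $t$ preserves $\mathrm{Min}(L)$, but the Flat Torus Theorem guarantees this only for elements that \emph{normalize} $L$; here $t$ merely commensurates $L$, and $tLt^{-1}\neq L$ in general. One needs an addendum: compare inner products on $L'\otimes\R$ and $L''\otimes\R$ via the inclusions $\mathrm{Min}(L)\subseteq\mathrm{Min}(L'),\mathrm{Min}(L'')$ to conclude that the image of $t$ in $GL(L\otimes\Q)$ is orthogonal. In (\ref{claim:1}), your contrapositive via a $BS(m,n)$ subgroup with $|m|\neq|n|$ is unjustified and likely false in the interesting cases---for instance when $A$ is conjugate to an element of $SO(2)$ with irrational trace (as in $G_P$), $A$ has no rational eigenlines and there is no obvious such subgroup; the paper instead quotes the Andreadakis--Raptis--Varsos criterion for residual finiteness of HNN-extensions of $\Z^n$. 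Finally, you correctly flag quasiconvexity of $L\cap G_1$ as the obstacle in (\ref{claim:3}); the resolution is that (a conjugate of) $L$ is the centralizer of a finite set---using self-centralization of $L$ when $L',L''\subsetneq L$, and for the virtual case, centralizers of suitable non-commuting elements in $G^{(2)}$---and centralizers of finite sets are $\cL$-quasiconvex in any biautomatic structure.
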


In the special case when $n=1$ the three parts of Theorem~\ref{thm:ashtalk}
are previously known results concerning Baumslag-Solitar groups.
Similarly to Baumslag-Solitar groups, many of the groups
$G(A,L')$ can be shown to be non-Hopfian.  We give a criterion
for this in Proposition~\ref{prop:nonhopf} which implies that
the groups $G_P$ and $G_{k,2}$, for $k$ odd, are all
non-Hopfian.

The three parts of Theorem~\ref{thm:ashtalk} are proved separately. The characterization of when $G$
is residually finite in claim \eqref{claim:1} follows from earlier work in \cite{A-R-V}, and we
use the affine action of $G$ on $L\otimes \Q$ to show that each residually
finite $G$ is linear over $\Q$. For claim \eqref{claim:2} we introduce an addendum to the
Flat Torus Theorem concerning the commensurator of an abelian group of
semi-simple isometries of a \cat{} space, which may be of independent
interest. The main result used to prove claim  \eqref{claim:3} is the next theorem that imposes strong restrictions on the commensurator of a quasiconvex
abelian subgroup in a biautomatic group.

\begin{thm}\label{thm:comm_of_ab_qc_sbgp} Suppose that $G$ is a group with a biautomatic structure $(\cA,\cL)$,
and $H \leqslant G$ is an $\cL$-quasiconvex abelian subgroup. Then the commensurator $\comm_G(H)$, of $H$ in $G$, has finite image in the abstract commensurator $\comm(H)$. In particular, there is a finite-index subgroup
 $\comm_G^0(H) \lhd \comm_G(H)$ such that every finitely generated subgroup of $\comm_G^0(H)$
centralizes a finite-index subgroup of $H$ in $G$.
\end{thm}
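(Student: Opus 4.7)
First I would reduce to $H\cong\Z^n$ by replacing $H$ by its torsion-free part, which does not affect whether the image of $\comm_G(H)$ in $\comm(H)$ is finite. Two classical facts about biautomatic groups underlie the whole argument: (i) $\cL$-quasiconvex subgroups are undistorted in $G$, and (ii) by a theorem of Gersten and Short, the translation length $\tau_\cA(g)\coloneq\lim_{k\to\infty}|g^k|_\cA/k$ takes rational values with a uniformly bounded denominator. By (i), $H$ is undistorted, so $\tau_\cA|_H$ is positive on nontrivial elements; using the subadditivity of $\tau_\cA|_H$ (which holds because $H$ is abelian, via $(h_1h_2)^k=h_1^kh_2^k$), it extends to a norm $\tau$ on $H\otimes\R\cong\R^n$. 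Since conjugation preserves $\tau_\cA$, every $g\in\comm_G(H)$ induces a $\Q$-linear commensuration $c_g\in GL(n,\Q)=\comm(H)$ that is an isometry of $(\R^n,\tau)$. This yields a homomorphism $\phi\colon\comm_G(H)\to GL(n,\Q)\cap\isom(\R^n,\tau)$ whose kernel is exactly $\comm_G^0(H)$, so the theorem reduces to showing the image $\bar\Gamma\coloneq\phi(\comm_G(H))$ is finite.

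This finiteness does not follow from the containment in $GL(n,\Q)\cap\isom(\R^n,\tau)$ alone: if $\tau$ is Euclidean and $n\geq 2$, this intersection contains the rotation by $\arccos(3/5)$ realised in $G_P$, which has infinite order. My plan is to prove finiteness of $\bar\Gamma$ in two steps. Step~1 (the main obstacle) is to show that every $c_g\in\bar\Gamma$ has finite order. Step~2 then uses the elementary observation that any torsion element of $GL(n,\Q)$ has order dividing a constant $M_n$ depending only on $n$: its characteristic polynomial factors over $\Q$ as a product of cyclotomic polynomials $\Phi_d$ with $\varphi(d)\leq n$, and only finitely many such $d$ exist. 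So $\bar\Gamma$ is a torsion subgroup of $GL(n,\C)$ of bounded exponent, and Burnside's theorem on torsion linear groups of bounded exponent (via Schur's local finiteness together with Jordan's bound on finite subgroups) implies $\bar\Gamma$ is finite.

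Step~1 is the heart of the theorem. The plan is to suppose $c_g$ has infinite order and derive a contradiction. Its eigenvalues lie on the unit circle (since $c_g$ is a norm isometry), so under the infinite-order hypothesis at least one eigenvalue is not a root of unity; a Weyl-equidistribution argument then shows that for generic $h\in H$ the orbit $\{c_g^k(h):k\in\Z\}$ is dense in a positive-dimensional subtorus of the $\tau$-sphere $\{x:\tau(x)=\tau(h)\}$, and therefore meets $H=\Z^n$ (discrete there) only in a finite set, forcing $[H:H\cap g^{-k}Hg^k]\to\infty$. To turn this into a contradiction with biautomaticity, I would compare the combing paths $\sigma(h)$ and $\sigma(c_g^k(h))$ using the fellow-traveller property of the bicombing applied to the conjugation path $g^k\sigma(h)g^{-k}$, and exploit the bounded-denominator rationality of translation lengths to produce quantitative constraints on the $c_g$-orbit that are incompatible with irrational-rotation dispersal. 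The concluding statement about $\comm_G^0(H)$ is then formal: as $\ker\phi$ it has finite index, and a finitely generated $F=\langle g_1,\dots,g_k\rangle\leq\comm_G^0(H)$ centralises $\bigcap_i H_i$, where $H_i$ is the finite-index subgroup of $H$ centralised by $g_i$.
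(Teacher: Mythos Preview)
Your reduction to $H\cong\Z^n$, the construction of the translation-length norm $\tau$ on $H\otimes\R$, and the observation that each $c_g\in GL(n,\Q)$ is a $\tau$-isometry are all correct, and your Step~2 is fine (Minkowski's bound on finite subgroups of $GL(n,\Q)$ plus Schur's local finiteness gives finiteness of any torsion subgroup).  The gap is in Step~1, and it is the whole difficulty of the theorem.  You correctly note that being a $\tau$-isometry in $GL(n,\Q)$ does not by itself force finite order; but your proposed remedy does not close this.  The equidistribution step shows at best that for generic $h\in\Z^n$ one has $c_g^k(h)\notin\Z^n$ for all large $k$, hence $h\notin H\cap g^{-k}Hg^k$; but the ``comparison of combing paths'' you describe, namely comparing $\sigma(c_g^k(h))$ with $g^k\sigma(h)g^{-k}$ via the biautomatic fellow-traveller property, yields a fellow-traveller constant proportional to $|g^k|_\cA$, which grows linearly in $k$, so no uniform constraint emerges.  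The rationality with bounded denominator of $\tau_\cA$ has already been entirely spent in constructing $\tau$; it gives no further restriction on which $\tau$-isometries can arise, and in particular does not exclude an irrational rotation.  As stated, the last sentence of your Step~1 is not an argument but a hope.

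The paper's proof is structurally different and supplies exactly what is missing.  Via Gersten--Short one restricts the biautomatic structure to obtain a biautomatic structure $(\cB,\cM)$ on $H$, and then introduces the \emph{boundary} $\partial\cM$ (equivalence classes of $\cM$-sequences tending to infinity).  The two substantive lemmas are: (a)~for abelian $H$ the boundary is \emph{finite}, because every boundary point is represented by a sequence $(SW^iT)_{i\in\N}$ coming from a simple cycle in the accepting automaton; and (b)~$\comm_G(H)$ acts on $\partial\cM$, where the action of $g$ sends the class of $(U_i)$ to the class of $\cM$-representatives of $g\,\nu(U_i)\,e_i$ for suitable short $e_i$ --- here the fellow-traveller constant depends only on $|g|_\cC$, not on any iterate.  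The kernel $\comm_G^0(H)$ of this action on a finite set then has finite index, and a pigeonhole argument along the fellow-travelling paths $(SW^iT)$ shows that any $g$ fixing such a boundary point centralises a power of $\mu(W)$; since the labels of simple cycles generate a finite-index subgroup of $H$, each $g\in\comm_G^0(H)$ centralises a finite-index subgroup.  In effect the finite boundary plays the role that a polyhedral unit ball for $\tau$ would play in your approach; proving $\tau$ is polyhedral (so that its isometry group is finite) would be another route, but that amounts to re-deriving the Neumann--Shapiro finiteness that underlies the paper's Theorem~3.1.
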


The key tool used in the proof of Theorem~\ref{thm:comm_of_ab_qc_sbgp} is the boundary of an automatic structure, discussed in Section~\ref{sec:background}.
Our strategy is to show that a biautomatic structure on $G$ induces a biautomatic structure
on the quasiconvex subgroup $H$, and $\comm_G(H)$ acts on the boundary of this structure in a natural way (see Section~\ref{sec:act_of_comm}).  In Section~\ref{sec:bd-abelian} we show that
the boundary of any (bi)automatic structure on the abelian group $H$ is finite, hence a finite-index subgroup of $\comm_G(H)$ must fix this boundary pointwise.
Finally, in Section~\ref{sec:abgp} we apply the latter to prove Theorem~\ref{thm:comm_of_ab_qc_sbgp}.

The technical heart of this paper is the results concerning biautomaticity,
but the other parts of the paper may be read independently of this
material.  Section~\ref{sec:flat_torus} contains our addendum to the Flat Torus Theorem.  Section~\ref{sec:comm_HNN} introduces the
groups $G(A,L')$, and characterizes which of them
are \cat{}.  Section~\ref{sec:biaut} characterizes which of the
groups $G(A,L')$ are biautomatic.  Section~\ref{sec:2dim} considers
in more detail the case when $L=\Z^2$, and discusses
a class of examples which includes the groups $G_P$
and $G_{k,2}$ already mentioned above,
establishing many of their properties.
Section~\ref{sec:n-H} concerns the non-Hopfian property,
with results only in the case $L=\Z^2$, and residual
finiteness, with a more general result.
Section~\ref{sec:amalgams} shows that many of our
examples can be embedded as index two subgroups of
free products with amalgamation in which each factor
is virtually abelian. This construction gives rise to amalgamated products of virtually  abelian groups with surprising properties. Section~\ref{sec:open-q} concludes with a short
list of open problems concerning the groups $G(A,L')$.

\subsection*{Acknowledgements}
Firstly, the authors thank Tomasz Prytu\l{a}.  He asked the authors
for an example of an abelian subgroup of a \cat{} group, whose
commensurator does not normalize any finite-index subgroup, in
connection with his work on classifying spaces for families of abelian
subgroups~\cite{osajdaprytula,prytula}.  This question was what
originally led us to consider the groups $G'_P$ and $G_P$.  The
authors also thank Martin Bridson, Pierre-Emmanuel Caprace, Derek
Holt, Jingyin Huang, Denis Osin and Kevin Whyte for helpful comments
on aspects of the work.  The first named author was partially
supported by a Research Fellowship from the Leverhulme Trust.  Most of
the work was done in Southampton.  However, during the project the
first named author spent some weeks at INI, Cambridge, where research
was supported by EPSRC grant EP/K032208/1.

\section{Background and notation}\label{sec:background}
\subsection{Commensurators}\label{subsec:comm}
If $G$ is a group and $H \leqslant G$ is a subgroup, the
  \emph{commensurator of $H$ in $G$} is the subset defined by
\[\comm_G(H)\coloneq \{g \in G \mid |H:(H\cap gHg^{-1})|<\infty
\mbox{ and } |gHg^{-1}:(H\cap gHg^{-1})|<\infty\}.\]
It is not difficult to see that $\comm_G(H)$ is actually a subgroup of $G$.
We will say that $G$ \emph{commensurates} $H$ if $G=\comm_G(H)$.

The elements of the \emph{abstract commensurator} of a group $G$,
denoted $\comm(G)$, are equivalence classes of isomorphisms
$\phi:H\rightarrow K$, where both $H$ and $K$ are finite-index
subgroups of $G$.  Two such isomorphisms $\phi_1:H_1\rightarrow K_1$
and $\phi_2:H_2\rightarrow K_2$ are equivalent if there is $H\leqslant H_1\cap H_2$
also of finite index in $G$ so that $\phi_1|_H= \phi_2|_H$.  The
abstract commensurator is a group, in which the composite of the
equivalence classes of $\phi:H\rightarrow K$ and $\psi:L\rightarrow M$
is the class of
$\psi\circ\phi:H\cap \phi^{-1}(L)\rightarrow \psi(\phi(H)\cap L)$.

For any group $G$ and a subgroup $H$, there is a natural map from
$\comm_G(H)$ to $\comm(H)$, taking $g\in \comm_G(H)$ to the element of
$\comm(H)$ represented by conjugation by $g$.  The kernel of
this homomorphism consists of the elements $g\in G$ that centralize some
finite-index subgroup of $H$.

It is easy to see that the abstract commensurator $\comm(\Z^n)$ can be
naturally identified with $GL(n,\Q)$.  Equivalently, if $L$ is a finitely
generated free abelian group then $\comm(L)$ is identified with
$GL(L\otimes\Q)$, the group of vector space automorphisms of $L\otimes\Q$.
There is a coordinate-free description of the homomorphism from the
abstract commensurator of $L$ to $GL(L\otimes \Q)$: suppose that
$\phi:L'\rightarrow L''$ is an isomorphism between finite-index subgroups
of $L$, and let $i:L'\rightarrow L$ and $j:L''\rightarrow L$ be the
inclusions.  Each of $i\otimes 1:L'\otimes \Q\rightarrow L\otimes \Q$
and $j\otimes 1:L''\otimes \Q \rightarrow L\otimes \Q$ is an
isomorphism.  The image $\tildephi$ of $(\phi:L'\rightarrow L'')$ in $GL(L\otimes \Q)$
is $\tildephi\coloneq (j\otimes 1)\circ(\phi\otimes 1)\circ(i\otimes 1)^{-1}:L\otimes \Q
\rightarrow L\otimes \Q$.

\subsection{Automatic structures and the fellow traveller property}
In this subsection we will briefly discuss the notions of automatic and biautomatic structures on groups. The reader is referred to
\cite[Section 2.3, 2.5]{Eps} for more details and examples.

Let $\cA$ be a finite set and let $G$ be a group with a map $\mu:\cA \to
G$. We will say that \emph{$G$ is generated by $\cA$} if the extension
of $\mu$ to a homomorphism from the free monoid $\cA^*$ to $G$ is
surjective. Elements of $\cA^*$ will be called \emph{words}, and if $W
\in \cA^*$ and $g \in G$ are such that $\mu(W)=g$, we will say that
\emph{$W$ represents $g$ in $G$}.
Given a word $W$ in $\cA^*$, $|W|$ will denote its length.
We will always assume that $\cA$ is \emph{closed under inversion}, that is there is an involution $\iota: \cA \to  \cA$,
where, for each $a \in A$, $\iota(a)$ is denoted $a^{-1}$ and satisfies $\mu(a^{-1})=\mu(a)^{-1}$ in $G$. Any subset $\cL \subseteq A^*$ will be called a
\emph{language} over $\cA$.

We can form the \emph{Cayley graph} $\Gamma(G,\cA)$, of $G$ with respect to
$\cA$ as follows: the vertices are elements of $G$ and for every $g \in G$ and $a \in A$ there is an edge
from $g$ to $g\mu(a)$, labelled by $a$.
Metrically, every edge in $\Gamma(G,\cA)$ will be considered as an isometric copy of the interval $[0,1]$.

We will use $\d_\cA(\cdot,\cdot)$ to denote the standard graph metric on
$\Gamma(G,\cA)$; its restriction to $G$ is the word metric corresponding to the generating set $\cA$.
For any element $g \in G$ we will use $|g|_\cA$ to denote
$\d_\cA(1_G,g)$; in other words, $|g|_\cA$ is the length of a shortest
word in $\cA^*$ representing $g$ in $G$. Note that $|g|_\cA=|g^{-1}|_\cA$ since $\cA$ is closed under inversion.

For an edge path $p$ in $\Gamma(G,\cA)$, $p_-$ and $p_+$ will denote the
\emph{start} and \emph{end} vertices of $p$ respectively, and $|p|$ will denote the
\emph{length} of $p$.  The \emph{label} of $p$ is a word from $\cA^*$ obtained by collating the labels of its edges.

Any edge path $p$ in $\Gamma(G,\cA)$ can be equipped
with the following \emph{ray parametrization}: $\widehat p:[0,\infty)
  \to \ga(G,\cA)$, where for each $t \in [0,|p|] \cap \Z$, $\widehat
  p(t)$ is the $t$-th vertex of $p$ (so that $\widehat p(0)=p_-$,
  $\widehat p(|p|)=p_+$), and $\widehat p(t)=p_+$ for all $t > |p|$;
  for every $s \in [0,|p|-1]\cap \Z$ and each $t \in [s,s+1]$,
  $\widehat p(t)$ is defined so that the restriction of $\widehat p$
  to $[s,s+1]$ is an isometry with the corresponding edge of $p$ in
  $\Gamma(G,\cA)$.

\begin{defn} Let $p, q$ be two edge paths in $\Gamma(G,\cA)$, with ray
  parametrizations $\widehat p, \widehat q :[0,\infty) \to \ga(G,\cA)$
    respectively, and let $\zeta \ge 0$ be a constant.
We will say that $p$ \emph{$\zeta$-follows} $q$ if $\d_\cA(\widehat
p(t),\widehat q(t)) \le \zeta$ for all $t \in [0,|p|]\cap \Z$.
The paths $p$ and $q$ are said to \emph{$\zeta$-fellow travel } if  $\d_\cA(\widehat p(t),\widehat q(t)) \le \zeta$ for all $t \in \Z$.

If $U$ and $V$ are two words from $\cA^*$, we can consider two edge
paths $p$, $q$ in $\ga(G,\cA)$ which start at $1_G$ and are labelled by $U$,
$V$ respectively.  We will say that $U$ \emph{$\zeta$-follows} $V$ if
the path $p$ $\zeta$-follows the path $q$. Similarly, $U$ and $V$ are said to \emph{$\zeta$-fellow travel} if $p$ and $q$ $\zeta$-fellow travel.
\end{defn}

It is easy to see that two edge paths (words) $\zeta$-fellow travel if and only if each of them $\zeta$-follows the other one.

\begin{defn}\label{def:biaut_struc} Let $G$ be a group. An \emph{automatic structure} on the group $G$ is a pair $(\cA,\cL)$,
where $\cA$ is a finite generating set of $G$, which comes equipped with a map
$\mu:\cA \to G$ as above and which is closed under inversion, and $\cL \subseteq \cA^*$ is a language satisfying the following conditions:

\begin{itemize}
  \item[(i)] $\mu(\cL)=G$;
  \item[(ii)] $\cL$ is a \emph{regular language}, i.e., $\cL$ is the accepted language of a finite state automaton $\mathfrak A$ over $\cA$;
  \item[(iii)] there exists $\zeta \ge 0$ such that for any two edge paths $p,q$ in $\Gamma(G,\cA)$, labelled by some words from $\cL$ and satisfying $p_-=q_-$ and
  $\d_\cA(p_+,q_+) \le 1$, $p$ and $q$ $\zeta$-fellow travel.
\end{itemize}
$(\cA,\cL)$ is a \emph{biautomatic structure} on $G$, if $\cL$ satisfies the conditions (i), (ii) and
\begin{itemize}
  \item [(iii')] there exists $\zeta \ge 0$ such that for any two edge paths $p,q$ in $\Gamma(G,\cA)$, labelled by some words from $\cL$ and satisfying
  $\d_\cA(p_-,q_-)\le 1$ and  $\d_\cA(p_+,q_+) \le 1$, $p$ and $q$ $\zeta$-fellow travel.
\end{itemize}
The group $G$ is said to be \emph{automatic} (\emph{biautomatic}) if it admits an automatic (respectively, biautomatic) structure.
\end{defn}

Obviously  condition (iii') is stronger than condition (iii), so every biautomatic structure on a group is also an automatic structure. Also,  condition (iii)
implies that for any two paths $p,q$ that are labelled by some words from $\cL$ and satisfy $p_-=q_-$ and $\d_\cA(p_+,q_+) \le C$, for some $C \in \N \cup \{0\}$,
$p$ and $q$ $(\zeta \max\{C,1\})$-fellow travel  in $\Gamma(G,\cA)$. And, if  (iii') holds, then the requirement $p_-=q_-$ can be relaxed to $\d_\cA(p_-,q_-) \le C$.

An automatic structure $(\cA,\cL)$ on a group $G$ is said to be
\emph{finite-to-one} if $|\mu^{-1}(g) \cap \cL|<\infty$ for every $g
\in G$. It is known (see \cite[Theorem 2.5.1]{Eps}) that every
automatic (biautomatic) structure can be refined to a finite-to-one
automatic (respectively, biautomatic) structure, hence from now on we
will assume that all the automatic and biautomatic structures are
finite-to-one. Without loss of generality we will also suppose that all the automata in this paper have no dead states.

It is worth noting that the nowadays standard definition of a biautomatic structure that we give above is not the same as the definition from \cite[Definition 2.5.4]{Eps}: see \cite{Amrhein}. However, for finite-to-one structures these definitions are equivalent \cite[Theorem 6]{Amrhein}.

\subsection{The boundary of an automatic structure}
\begin{defn}\label{def:tend_to_inf} Let $(\cA,\cL)$ be an automatic
  structure on $G$, and let $(W_i)_{i \in \N}$ be a sequence of words
  from $\cL$. We will say that this sequence
\emph{tends to infinity} if $|W_i|\to \infty$ as $i \to \infty$ and
there exists $\zeta \ge 0$ such that for any $i,j \in \N$, $W_i$
$\zeta$-follows $W_j$ whenever $i \le j$.

Suppose that $(U_i)_{i\in \N}$ and $(V_i)_{i \in \N}$ are two sequences of words
from $\cL$ tending to infinity, and, for each $i \in \N$, $p_i$,
$q_i$ are the edge paths in $\ga(G,\cA)$ starting at $1_G$ and labelled
by $U_i$, $V_i$ respectively. We will say that $(U_i)_{i\in \N}$ is
\emph{equivalent} to $(V_i)_{i \in \N}$ if  the Hausdorff distance between the corresponding sequences of
paths $(p_i)_{i\in \N}$ and $(q_i)_{i \in \N}$ is at most $\eta$ in
$\ga(G,\cA)$. In other words, there must exist $\eta \ge 0$ such
that for all $i \in \N$, any vertex of $p_i$ is at most
$\eta$ away from a vertex of $q_j$, for some $j \in \N$, and
vice-versa.
\end{defn}

Definition \ref{def:tend_to_inf} immediately implies the following.

\begin{rem}\label{rem:subseq} If $(U_{i_j})_{j \in \N}$ is a
  subsequence of a sequence of words $(U_i)_{i \in \N}$ tending to
  infinity, then $(U_{i_j})_{j \in \N}$ also tends to infinity and is
  equivalent to $(U_i)_{i \in \N}$.
\end{rem}

\begin{defn}\label{def:boundary} If $(\cA,\cL)$ is an automatic structure on a group $G$,
  then the \emph{boundary}, $\partial \cL$, of this automatic structure
  is the set of equivalence classes of sequences of words from $\cL$
  tending to infinity. If $\alpha \in \partial\cL$ is the equivalence
  class of a sequence $(U_i)_{i \in \N}$, we will say that this
  sequence \emph{converges} to the boundary point $\alpha$.
\end{defn}

The first definition of a boundary of an automatic structure was given by Neumann and Shapiro in \cite[pp. 459-460]{Neu-Sha}. It is not difficult to see that
there is a natural bijection between their boundary and ours. However, our Definition \ref{def:boundary} is  better suited for constructing
the action of a group on the boundary of a biautomatic structure (see Section~\ref{sec:act_of_comm} below).

Given an automaton $\mathfrak A$, by a \emph{path} in this automaton
we will mean any directed path in the corresponding graph. If $x$ is a
state of $\mathfrak A$, a \emph{cycle based at $x$} is a closed path
starting and ending at the state $x$.  A path in $\mathfrak A$ is
\emph{simple} if it does not pass through the same state twice. A
cycle based at a state $x$ in $\mathfrak A$ is \emph{simple} if it
passes through $x$ exactly twice (at its beginning and at its end) and
does not pass through any other state more than once. A path or a
cycle is \emph{non-trivial} if it has at least one edge.

The following lemma is a straightforward consequence of the
definitions.

\begin{lemma}\label{lem:bd_pt_from_cycle} Suppose that $(\cA,\cL)$ is an
  automatic structure on a group $G$, $\mathfrak A$ is a finite state automaton
  for $\cL$ and $W$ is the label of
a non-trivial cycle $w$ in $\mathfrak A$ based at some state $x$.  Let $S$
and $T$ be words from $\cA^*$ labelling some paths $s$ and $t$ connecting
the initial state of $\mathfrak A$ with $x$ and $x$ with an accept
state of $\mathfrak{A}$ respectively. Then $(SW^iT)_{i\in\N}$ is a
sequence of words from $\cL$ tending to infinity.  In particular, if
$\cL$ is infinite then $\partial \cL \neq \emptyset$.
\end{lemma}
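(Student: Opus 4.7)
The plan is to verify directly the two conditions in Definition~\ref{def:tend_to_inf} for the sequence $(SW^iT)_{i\in\N}$; the argument is purely combinatorial and does not invoke the fellow-traveller property~(iii) of the automatic structure. Throughout, set $U_i\coloneq SW^iT$ and let $p_i$ be the edge path in $\ga(G,\cA)$ starting at $1_G$ and labelled by $U_i$.

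First I would observe that $U_i\in\cL$ for every $i\in\N$: the concatenated path $s\cdot w^i\cdot t$ in $\mathfrak A$ runs from the initial state to an accept state (since $w$ is a cycle at $x$), so its label is accepted. Since $w$ is non-trivial, $|W|\geq 1$, and therefore $|U_i|=|S|+i|W|+|T|\to\infty$ as $i\to\infty$, which handles the first clause of Definition~\ref{def:tend_to_inf}.

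The crux is producing a uniform $\zeta\geq 0$ such that $U_i$ $\zeta$-follows $U_j$ whenever $i\leq j$. Put $n\coloneq |S|+i|W|$, and note that $U_i$ and $U_j$ share the common prefix $SW^i$ of length $n$, so $\widehat p_i(t)=\widehat p_j(t)$ for every $t\in[0,n]\cap\Z$. For the remaining range $t\in[n,|U_i|]\cap\Z=[n,n+|T|]\cap\Z$, each of the vertices $\widehat p_i(t)$ and $\widehat p_j(t)$ is reached from the common vertex $\widehat p_i(n)$ by traversing exactly $t-n\leq |T|$ edges of $\ga(G,\cA)$. Hence $\d_\cA(\widehat p_i(t),\widehat p_i(n))\leq |T|$ and $\d_\cA(\widehat p_j(t),\widehat p_j(n))\leq |T|$, and the triangle inequality yields $\d_\cA(\widehat p_i(t),\widehat p_j(t))\leq 2|T|$. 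Thus $\zeta\coloneq 2|T|$ works uniformly in $i,j$, completing the verification that $(U_i)_{i\in\N}$ tends to infinity.

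For the ``in particular'' clause, suppose $\cL$ is infinite. Since $\mathfrak A$ has finitely many states, say $N$, any $V\in\cL$ with $|V|>N$ has an accepting path in $\mathfrak A$ that visits some state $x$ twice, by pigeonhole. Decomposing this accepting path as $s\cdot w\cdot t$, where $s$ goes from the initial state to the first visit of $x$, $w$ is a non-trivial cycle at $x$, and $t$ goes from the final visit of $x$ to the accept state, the first part of the lemma produces a sequence in $\cL$ tending to infinity and hence an element of $\partial\cL$. The main ``obstacle'' here is not really an obstacle: one only needs to notice that the shared prefix $SW^i$ forces exact agreement up to time $n$, after which the remaining suffixes are each of combinatorial length at most $|T|$, so the elementary triangle-inequality bound $2|T|$ suffices.
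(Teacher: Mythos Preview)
Your proof is correct and is precisely the direct verification that the paper has in mind: the paper omits any argument, merely stating that the lemma is ``a straightforward consequence of the definitions''. Your explicit computation---shared prefix $SW^i$ giving exact agreement up to time $n=|S|+i|W|$, then a $2|T|$ bound from the triangle inequality for the remaining $|T|$ steps---together with the pumping-lemma observation for the ``in particular'' clause, is exactly the kind of elementary check the authors are leaving to the reader.
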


\begin{defn}\label{def:simple_bd_pt} In the notation of Lemma \ref{lem:bd_pt_from_cycle}, if
  the paths $s,t$ and the cycle $w$ are all simple,
the sequence of words $(SW^iT)_{i\in\N}$ will be called a \emph{simple sequence
  tending to infinity} and the corresponding point $\alpha \in
\partial\cL$ will be called a \emph{simple boundary point}.
\end{defn}

\section{The boundary of an automatic structure on an abelian group}\label{sec:bd-abelian}
\begin{thm}\label{thm:bd_of_aut_struc_on_ab} Let $H$ be an abelian
  group with an automatic structure $(\cB, \cM)$ and let $\mathfrak{A}$
  be a finite state automaton accepting the language $\cM$. Then
  every boundary point $\alpha \in \partial\cM$ is simple; in other
  words, every sequence of words from $\cM$ tending to infinity is
  equivalent to a simple such sequence.
\end{thm}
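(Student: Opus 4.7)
I would code each $U_i$ as its accepting path $\pi_i$ in $\mathfrak A$ and decompose $\pi_i$ as a simple accepting path together with simple cycles inserted along it. Since $\mathfrak A$ has only finitely many simple paths and finitely many simple cycles, the pigeonhole principle together with Remark~\ref{rem:subseq} allows me to pass to a subsequence for which this decomposition is combinatorially uniform: the same underlying simple accepting path $\sigma$ and the same list of simple cycles $c_1,\ldots,c_r$ appearing along $\sigma$ with multiplicities $n_{i,1},\ldots,n_{i,r}$. A further subsequence extraction lets me assume that for each $k$ the multiplicity $n_{i,k}$ either tends to infinity or is eventually constant; since the constant-multiplicity cycles contribute only a bounded amount, they may be ignored in the asymptotic analysis, so I may assume $n_{i,k}\to\infty$ for every $k$. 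Write $W_k$ for the label of $c_k$, $\ell_k = |W_k|$, and $h_k = \mu(W_k) \in H$.

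Next, I would use abelianness of $H$ to express the position of the path $p_i$ (labelled by $U_i$) at any integer time $\tau$ as $\sum_k m_{i,k}(\tau)\, h_k$ plus a uniformly bounded error, where $m_{i,k}(\tau)$ counts the complete traversals of $c_k$ performed by $p_i$ up to time $\tau$. The assumption that $U_i$ $\zeta$-follows $U_j$ for $i \leq j$ would then be applied by choosing $\tau$ inside a long stretch where $U_j$ is still executing $c_k^{n_{j,k}}$ while $U_i$ has already moved on to some $c_{k'}^{n_{i,k'}}$. Letting $\tau$ and $n_{j,k}-n_{i,k}$ tend to infinity while the positional discrepancy remains bounded by $\zeta$ would force the proportionality
\[
\ell_{k'}\, h_k \;=\; \ell_k\, h_{k'} \quad \text{in } H \otimes \Q,
\]
for every pair of indices $k, k'$. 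Consequently all ratios $h_k/\ell_k$ equal a single element $v \in H\otimes\Q$, the asymptotic ``velocity'' of the sequence.

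Finally I would set $c := c_1$, take $x$ to be its base state, and choose simple paths in $\mathfrak A$ from the initial state to $x$ and from $x$ to some accept state, with labels $S$ and $T$ (these exist because $c_1$ appears in each $\pi_i$). By Lemma~\ref{lem:bd_pt_from_cycle} and Definition~\ref{def:simple_bd_pt}, $(S W_1^i T)_{i\in\N}$ is a simple sequence tending to infinity. To verify equivalence with $(U_i)$ I would use the identity $h_k = \ell_k v$ in $H \otimes \Q$ to see that both families of paths sweep out, modulo a uniformly bounded offset, the same discrete ray in $H$ in the direction $v$ (or remain in a bounded region if $v = 0$); a Hausdorff distance bound then follows from a direct estimate using the explicit expressions for the positions.

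The main obstacle I anticipate is the rigidity step yielding $\ell_{k'} h_k = \ell_k h_{k'}$: the bookkeeping required to identify which portion of $U_i$ or $U_j$ the time parameter $\tau$ lies in, followed by the promotion of an \emph{a priori} bounded discrepancy to an exact proportionality in $H \otimes \Q$ by letting $n_{j,k}-n_{i,k}$ be arbitrarily large, is the technical heart of the argument.
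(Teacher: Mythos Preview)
Your approach has a genuine gap, and it is precisely at the point you flag as the main obstacle. The decomposition of $\pi_i$ into a simple accepting path together with a multiset of simple cycles does \emph{not} guarantee that the cycles occur in consecutive blocks $c_k^{n_{i,k}}$ inside $U_i$; they may be arbitrarily interleaved. For instance, if $\mathfrak A$ has a single state with two self-loops labelled $a$ and $b$, then $U_i=(ab)^i$ contains $i$ copies each of the simple cycles $a$ and $b$, yet no stretch $a^m$ or $b^m$ with $m>1$ ever appears. There is then no time $\tau$ ``inside a long stretch where $U_j$ is still executing $c_k^{n_{j,k}}$'', and your mechanism for forcing $\ell_{k'}h_k=\ell_kh_{k'}$ collapses. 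More fundamentally, your argument invokes only the $\zeta$-following built into the definition of ``$(U_i)$ tends to infinity'' and never the fellow-traveller axiom~(iii) of the automatic structure itself; but in the example just given, $(U_i)=((ab)^i)$ tends to infinity regardless of the relationship between $\mu(a)$ and $\mu(b)$, so the proportionality you seek cannot be extracted from that hypothesis alone. (Of course, in any genuine automatic structure with this automaton one must have $\mu(a)=\mu(b)$, since $a^nb^n$ and $b^na^n$ must fellow travel; the point is that this information comes from axiom~(iii), which you never use.)

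The paper's proof proceeds quite differently and avoids any velocity computation. After passing to a subsequence whose words share arbitrarily long common prefixes, it singles out one simple cycle $w$ (with label $W$) that occurs unboundedly often in those common prefixes. The key step is then the observation that if $U_j=SWS_1WS_2\cdots WS_i$ with each $S_l$ labelling a path from the base state $x$ back to $x$ (or to an accept state for $S_i$), then the rearranged word $R=SW^iS_1\cdots S_i$ labels the path $sw^is_1\cdots s_i$ in $\mathfrak A$ and therefore also lies in $\cM$; by abelianness it represents the same element of $H$ as $U_j$. Now the fellow-traveller property of the automatic structure can be applied to the pair $U_j,R\in\cM$, and this immediately gives that $SW^i$ $\varepsilon$-follows $U_j$. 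Equivalence of $(SW^iT)$ with $(U_i)$ then follows by routine estimates. The idea your argument is missing is exactly this: use commutativity to manufacture a \emph{new} word of $\cM$ containing a long block $W^i$, rather than trying to locate such a block already inside $U_j$.
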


\begin{proof} Let $w$ be a simple cycle in $\mathfrak A$ based at a
  state $x$. Given a path $v$ in $\mathfrak A$, an \emph{occurrence}
  of $w$ in $v$ is a subpath of $v$ starting and ending at $x$ and
  traversing $w$ exactly once. We will use $\log_{w}(v)$ to denote the
  number of occurrences of $w$ in $v$.

Let $(U_i)_{i \in \N}$ be a sequence of words from $\cM$ tending to
infinity, and for each $i \in \N$ fix a path $u_i$ from the initial
state to an accept state in $\mathfrak{A}$, labelled by $U_i$. Since
$|u_i|=|U_i|\to \infty$ as $i \to \infty$ and the automaton
$\mathfrak{A}$ is finite, in view of Remark \ref{rem:subseq} we can
replace $(U_i)_{i \in \N}$ by a subsequence so that for all $i \in \N$
$u_i$ and $u_{i+1}$ share a common initial subpath $v_i$, of length $i$.

Evidently, since $|v_i|=i \to \infty$ as $i \to \infty$, there must
exist some non-trivial simple cycle $w$, based at a state $x$ in
$\mathfrak A$, such that
\begin{equation}\label{eq:limsup}
\limsup_{i \to \infty}\log_w(v_i)=\infty.
\end{equation}

Choose $I \in \N$ so that $\log_w(v_I)>0$, then there is a path $s$
from the initial state of $\mathfrak A$ to $x$ such that $s w$ is an
initial subpath of $v_I$ (and, hence, of $u_i$ for all $i \ge I$). We
can also choose a simple path $t$ from $x$ to an accept state of
$\mathfrak A$, and let $S$, $T$ and $W$ be the labels of $s$, $t$ and
$w$ respectively. We will now show that the sequence of words $(SW^iT)_{i \in   \N}$, which tends to infinity by Lemma \ref{lem:bd_pt_from_cycle},
is equivalent to the original sequence $(U_i)_{i \in \N}$.

Let $p_i$ and $q_i$ be the paths in $\ga(H,\cB)$ starting at $1_H$ and
labelled by the words $U_i$ and $SW^iT$ respectively, $i \in \N$.  For
any $i \in \N$, according to \eqref{eq:limsup}, there is $j \ge I$
such that $\log_w(u_j) \ge i$, hence $u_j$ is the concatenation $u_j=s
w s_{1} w s_{2} \dots w s_{i}$, where $s_{1},\dots, s_{i-1}$ are some
paths in $\mathfrak A$ from $x$ to itself, and $s_{i}$ is a path from
$x$ to an accept state of $\mathfrak A$. Let $S_{k}$ be the word in
$\cB^*$ labelling $s_{k}$, $k=1,\dots, i$.

Note that since $H$ is abelian, the word $U_j=SWS_{1} \dots W S_{i}$
represents the same element of $H$ as the word $R\coloneq
SW^iS_{1}\dots S_{i}$.  Moreover, clearly $R$ is accepted by
$\mathfrak A$ (it labels the path $sw^i s_1 \dots s_i$ in $\mathfrak A$), by
construction, hence $U_j$ and $R$ $\e$-fellow travel in $\ga(H,\cB)$,
where $\e \ge 0$ is the constant from the definition of the automatic
structure $(\cB,\cM)$. Therefore the word $SW^i$ $\e$-follows $U_j$ in
$\ga(H,\cB)$, which implies that every vertex of the path
$q_i$ lies $\eta_1$-close to a vertex of $p_j$, where  $\eta_1 \coloneq \e+|T|$.

To show the converse, let $l \in \N$ be arbitrary and set $i\coloneq
|U_l|$. By \eqref{eq:limsup} there exists $j \ge \max\{l,i,I\}$ such
that $u_j=s w s_{1} w s_{2} \dots w s_{i}$ as before. If $S_k$ is the
label of $s_k$, $k=1,\dots,i$, arguing as above we can conclude that
the words $U_j$ and $R\coloneq SW^iS_{1}\dots S_{i}$ $\e$-fellow
travel in $\ga(H,\cB)$.  Since $(U_m)_{m\in\N}$ tends to infinity, we
know that there is $\zeta \ge 0$, depending only on this sequence,
such that $U_l$ $\zeta$-follows $U_j$, as $l \le j$. Hence $U_l$
$\zeta$-follows the prefix of $U_j$, of length $|U_l|=i$, which, in
its own turn, $\e$-follows the prefix of $R$, of the same
length. Since $SW^i$ is a prefix of $R$ of length at least $i$, we can
conclude that $U_l$ $(\zeta+\e)$-follows the word $SW^i$ in
$\ga(H,\cB)$, hence it also $(\zeta+\e)$-follows the word
$SW^iT$. Therefore each vertex of the path $p_l$ is $\eta_2$-close to
a vertex of the path $q_i$ in $\ga(H,\cB)$, where $\eta_2\coloneq
\zeta+\e$.

Thus we have shown that the sequences of paths $(p_i)_{i \in \N}$ and
$(q_i)_{i \in \N}$ lie in $\eta
\coloneq\max\{\eta_1,\eta_2\}$-neighborhoods of each other in
$\ga(H,\cB)$, yielding that the sequences $(U_i)_{i \in \N}$ and
$(SW^iT)_{i\in\N}$ are equivalent, as claimed.

The proof of the theorem is not quite finished yet, as the path $s$,
labelled by $S$ in $\mathfrak A$, may not be simple. So, choose some
simple path $s'$, joining the initial state of $\mathfrak A$ with the
state $x$, at which the simple cycle $w$ is based, and let $S'$ be the
word labelling $s'$.  By Lemma \ref{lem:bd_pt_from_cycle}, $(S'W^iT)_{i
  \in \N}$ is a simple sequence tending to infinity, and we will
complete the proof by showing that this sequence is equivalent to
$(SW^iT)_{i\in\N}$ (and, hence to $(U_i)_{i \in \N}$, by
transitivity).

Let $q_i'$ be the path in $\ga(H,\cB)$ starting at $1_H$ and labelled by
the word $S' W^i T$, $i \in \N$. Since $H$ is abelian, for all $i \in
\N$ the word $(SW^iT)^{-1} S' W^i T$ represents the same element of
$H$ as the word $S^{-1} S'$. Therefore $\d_\cB((q_i)_+,(q_i')_+) \le
\theta$ for all $i \in \N$ , where $\theta \coloneq |S^{-1}S'|$.
Since the labels of $q_i$ and $q_i'$ are both in the language $\cM$,
these paths $\lambda$-fellow travel in $\ga(H,\cB)$ for each $i \in \N$,
where $\lambda \coloneq \e\max\{1,\theta\}$.  Hence the Hausdorff
distance between the sequences of paths $(q_i)_{i \in \N}$ and
$(q_i')_{i \in \N}$ is at most $\lambda$, which implies that the
sequences of words $(SW^iT)_{i\in\N}$ and $(S'W^iT)_{i\in\N}$ are
equivalent.  Thus the theorem is proved.
\end{proof}

Definition \ref{def:simple_bd_pt} implies that any automatic structure admits only finitely many simple sequences of words tending to infinity.
Therefore the following statement is a consequence of Theorem \ref{thm:bd_of_aut_struc_on_ab} (cf. \cite[Theorem~6.7]{Neu-Sha}).

\begin{cor}\label{cor:finite_boundary} If $(\cB,\cM)$ is an automatic structure on an abelian group then $\partial \cM$ is finite.
\end{cor}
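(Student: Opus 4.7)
The plan is to combine Theorem~\ref{thm:bd_of_aut_struc_on_ab} with a straightforward finiteness observation about the automaton $\mathfrak{A}$ accepting $\cM$. By Theorem~\ref{thm:bd_of_aut_struc_on_ab}, every boundary point of $\cM$ is simple, i.e., is represented by a sequence of the form $(SW^iT)_{i\in\N}$, where $S$, $W$, $T$ are the labels of, respectively, a simple path $s$ from the initial state of $\mathfrak{A}$ to some state $x$, a simple non-trivial cycle $w$ based at $x$, and a simple path $t$ from $x$ to an accept state of $\mathfrak{A}$ (Definition~\ref{def:simple_bd_pt}).

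Now I would observe that because $\mathfrak{A}$ has finitely many states and no state is repeated along a simple path or simple cycle, the set of simple paths and simple cycles in $\mathfrak{A}$ is finite. Consequently there are only finitely many triples $(s,w,t)$ as above, and hence only finitely many simple sequences $(SW^iT)_{i\in\N}$ tending to infinity. Each such sequence determines a single equivalence class in $\partial\cM$, so the set of simple boundary points is finite.

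Combining the two steps, every element of $\partial\cM$ is the equivalence class of some simple sequence (by Theorem~\ref{thm:bd_of_aut_struc_on_ab}), and there are only finitely many such classes (by the finiteness of simple paths and cycles in $\mathfrak{A}$). Therefore $\partial\cM$ is finite.

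There is no real obstacle here: Theorem~\ref{thm:bd_of_aut_struc_on_ab} does all the heavy lifting, and the remaining ingredient is just the elementary fact that a finite directed graph has finitely many simple paths and finitely many simple cycles. The only minor subtlety is to note that distinct triples $(s,w,t)$ may well define equivalent sequences, but this only makes $\partial\cM$ smaller and does not affect finiteness.
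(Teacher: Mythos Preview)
Your proof is correct and matches the paper's own argument: the paper likewise notes that Definition~\ref{def:simple_bd_pt} immediately yields only finitely many simple sequences tending to infinity, and then invokes Theorem~\ref{thm:bd_of_aut_struc_on_ab} to conclude that $\partial\cM$ is finite.
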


\section{The action of the commensurator on the boundary of a
  quasiconvex subgroup}\label{sec:act_of_comm}
Let $G$ be a group equipped with a (finite-to-one) biautomatic structure $(\cA, \cL)$.
Recall that a subgroup $H\leqslant G$ is $\cL$-\emph{quasiconvex} if
there exists $\varkappa \ge 0$ such that for any path $p$ in
$\ga(G,\cA)$ starting at $1_G$, ending at some $h \in H$ and labelled by
a word $W \in \cL$, every vertex of $p$ lies in the
$\varkappa$-neighborhood of $H$ (see \cite[p. 129]{G-S}).  Given such a
quasiconvex subgroup, let $\cL' \coloneq \cL \cap \mu^{-1}(H)$ and define a finite subset $\cB$ of $H$ by
\begin{equation}\label{eq:B} \cB\coloneq \{g^{-1} \mu(a) g'
\mid a \in \cA,~g, g' \in G,~|g|_\cA,|g'|_\cA \le \varkappa\} \cap H.
\end{equation}
Note that $\cB=\cB^{-1}$ since $\cA$ is closed under inversion.
Recalling the construction from \cite[p. 138]{G-S}, given any word
$W=a_1\dots a_n \in \cL'$, where $a_1,\dots,a_n \in \cA$, the
quasiconvexity of $H$ implies that for each $i \in \{1,\dots,n\}$
there is $g_i \in G$ such that $|g_i|_\cA \le \varkappa$ and
$\mu(a_1\dots a_i)g_i \in H$. Clearly we can choose $g_n =1_G$ and let
$g_0\coloneq 1_G$ so that $\mu(W)=\prod_{i=1}^n g_{i-1}^{-1} \mu(a_i)
g_i$ in $G$. This allows us to re-write the words from $\cL'$ as words
from $\cB^*$, possibly in a non-unique fashion due to some freedom in
the choice of $g_i$. Let $\cM \subseteq \cB^*$ denote the resulting
language, consisting of words from $\cL'$ re-written as words in $\cB^*$
in such a way (with all possible $g_i$'s).

In \cite[Theorem 3.1]{G-S} Gersten and Short proved that $(\cB,\cM)$ is
a biautomatic structure on $H$. For our purposes it will be convenient
to modify the original biautomatic structure on $G$ as follows.  Let
$\cC \coloneq \cA \sqcup \cB$ be the abstract union of the finite sets $\cA$
and $\cB$. Then $\cN \coloneq \cL \cup \cM$ is a language in $\cC^*$.
Obviously $\cL$ and $\cM$ are still regular in $\cC^*$, hence $\cN$ is
also a regular language in $\cC^*$, as a union of regular languages (see
\cite[Lemma 1.4.1]{Eps}).

One can define a map $\nu:\cC \to G$, where $\nu|_\cA=\mu$ and $\nu|_\cB$ is
the identity map on $\cB$ (as $\cB \subseteq G$, by definition), and
extend it to a monoid homomorphism $\nu:\cC^* \to G$. Clearly $\cC$ is a
finite generating set of $G$.

Consider any path $p$ in $\ga(G,\cC)$, labelled by a word  $U \in \cM$. Let $f\coloneq p_- \in G$, then $p_+$ belongs to the coset $fH$.
By definition, $U$ is obtained from a word $W \in \cL'=\cL \cap \mu^{-1}(H) \subseteq \cL$
by applying the re-writing process above.
Let $\tilde p$ denote the path in $\ga(G,\cC)$ labelled by $W$ and starting at $f=p_-$. Then ${\tilde p}_+=p_+$ and $|\tilde p|=|p|$.
Moreover, the paths $p$ and $\tilde p$  $\varkappa$-fellow travel in $\ga(G,\cC)$, by construction (see Figure \ref{fig:two_paths}).

\begin{figure}[!ht]
  \begin{center}
   \includegraphics{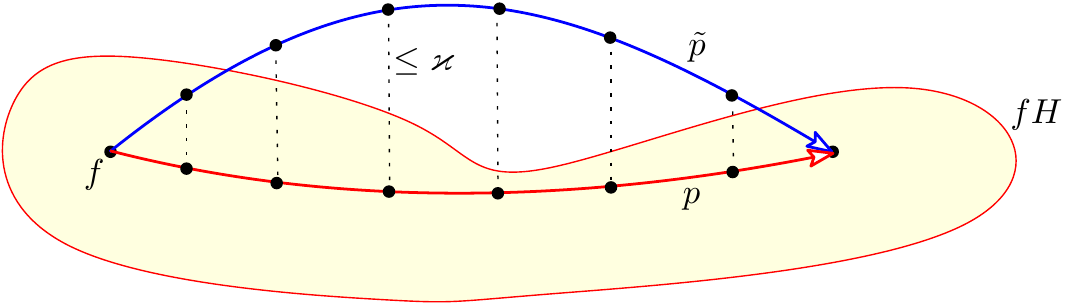}
  \end{center}
  \caption{The paths $p$ and $\tilde p$.}\label{fig:two_paths}
\end{figure}

\begin{lemma}\label{lem:new_biaut_struc} $(\cC,\cN)$ is a finite-to-one
  biautomatic structure on $G$.
\end{lemma}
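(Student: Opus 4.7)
\emph{Plan.} I would verify the conditions of Definition~\ref{def:biaut_struc} in turn. The generating set $\cC=\cA\sqcup\cB$ is finite and closed under inversion (since $\cA$ is given to be, and $\cB=\cB^{-1}$ was noted just after \eqref{eq:B}); surjectivity $\nu(\cN)=G$ follows from $\nu|_\cA=\mu$ and $\mu(\cL)=G$; and regularity of $\cN=\cL\cup\cM$ in $\cC^*$ has already been observed. So the real content is the biautomatic fellow-traveller condition (iii') together with the finite-to-one property.

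The key idea is to reduce everything to the biautomaticity of $(\cA,\cL)$ via a simple comparison trick. To every word $X\in\cN$ I associate a word $\overline X\in\cL$ of the \emph{same length} as $X$ so that, for any starting vertex $f\in G$, the paths in $\ga(G,\cC)$ starting at $f$ and labelled by $X$ and $\overline X$ respectively $\varkappa$-fellow travel. If $X\in\cL$, take $\overline X=X$. If $X\in\cM$, then $X$ arose from some $W\in\cL'\subseteq\cL$ by replacing its $i$-th letter $a_i\in\cA$ with $g_{i-1}^{-1}\mu(a_i)g_i\in\cB$; so $|X|=|W|$, and setting $\overline X=W$ the required $\varkappa$-fellow-travel relation is exactly the one between $p$ and $\tilde p$ displayed in Figure~\ref{fig:two_paths}.

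Next, every $\cC$-edge has $\cA$-length at most $2\varkappa+1$, because each $b\in\cB$ is of the form $g^{-1}\mu(a)g'$ with $|g|_\cA,|g'|_\cA\le\varkappa$. Hence on $G$ one has $\d_\cC\le\d_\cA\le(2\varkappa+1)\,\d_\cC$. Given paths $p,q$ in $\ga(G,\cC)$ labelled by $X,Y\in\cN$ with $\d_\cC(p_-,q_-)\le1$ and $\d_\cC(p_+,q_+)\le1$, let $\overline p,\overline q$ be the paths starting at $p_-,q_-$ labelled by $\overline X,\overline Y\in\cL$. Their endpoints coincide with those of $p,q$ (by the construction of $\overline X,\overline Y$), and therefore $\d_\cA(\overline p_\pm,\overline q_\pm)\le2\varkappa+1$. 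By biautomaticity of $(\cA,\cL)$ (in the enhanced form noted immediately after Definition~\ref{def:biaut_struc}), $\overline p$ and $\overline q$ $\zeta(2\varkappa+1)$-fellow travel in $\ga(G,\cA)$, hence also in $\ga(G,\cC)$. Combining with the two $\varkappa$-fellow-traveller relations $p\sim\overline p$ and $q\sim\overline q$, which hold at every integer time because all four paths have matching lengths in pairs, yields a common fellow-traveller constant $\zeta(2\varkappa+1)+2\varkappa$ for $p,q$ in $\ga(G,\cC)$, establishing (iii').

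Finally, for the finite-to-one property: fix $g\in G$. The set $\nu^{-1}(g)\cap\cL=\mu^{-1}(g)\cap\cL$ is finite by assumption on $(\cA,\cL)$. For $\cM$, each $W\in\cL'$ produces only boundedly many rewritings, since at each step the auxiliary element $g_i$ must lie in the finite set of elements of $\cA$-length at most $\varkappa$; as only finitely many $W\in\cL'$ represent $g$, the set $\nu^{-1}(g)\cap\cM$ is finite as well. Hence $\nu^{-1}(g)\cap\cN$ is finite, completing the proof. The main obstacle is essentially bookkeeping; the real work is done by the length-preservation of the rewriting process and by the linear comparison between $\d_\cA$ and $\d_\cC$.
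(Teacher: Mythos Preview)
Your proposal is correct and follows essentially the same route as the paper's proof: associate to each $\cN$-word an $\cL$-word of the same length (identity on $\cL$, the pre-rewriting word $W$ on $\cM$), use $|b|_\cA\le 2\varkappa+1$ to bound the $\cA$-distance between endpoints by $2\varkappa+1$, apply biautomaticity of $(\cA,\cL)$ to the $\cL$-labelled paths, and add $2\varkappa$ to pass back to the original paths; the finite-to-one argument is also the same. Even your constant $\zeta(2\varkappa+1)+2\varkappa$ matches the paper's.
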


\begin{proof} We have already observed that $\cN$ is a regular
  language in $\cC^*$; moreover, $\nu(\cN)=G$ as $\nu(\cL)=G$ and $\cL
  \subseteq \cN$.  Let us now check that the fellow travelling
  property holds.

Since $(\cA,\cL)$ is a biautomatic structure on $G$, there is some $\e
\ge 0$ such that two paths in $\ga(G,\cA)$ with labels from $\cL$ whose endpoints are at distance at most $1$ from each other
$\e$-fellow travel.  Consider any two
paths $p$ and $q$ in $\ga(G,\cC)$ labelled by words from $\cN$, with
$\d_\cC(p_-,q_-) \le 1$ and $\d_\cC(p_+,q_+) \le 1$.  Define the paths
$p'$ and $q'$ in $\ga(G,\cC)$ as follows. If $p$ is labelled by a word
from $\cL$, then $p'=p$; otherwise, if $p$ is labelled by a word from
$\cM$ then $p'$ is the path $\tilde p$ defined above, labelled by a
word from $\cL$. We construct the path $q'$ similarly, and note that
since the labels of $p'$ and $q'$ are in $\cA^*$, they can be considered
as paths in $\ga(G,\cA)$.

Since $|b|_\cA \le 2\varkappa+1$ for any $b \in \cB$, by \eqref{eq:B},
$p'$ has the same endpoints as $p$, and $q'$ has the same endpoints as
$q$, we see that $\d_\cA(p'_-,q'_-)\le 2\varkappa+1$ and
$\d_\cA(p'_+,q'_+) \le 2\varkappa+1$. Therefore the paths $p'$ and $q'$
$\zeta$-fellow travel in $\ga(G,\cA)$, where $\zeta\coloneq \e
(2\varkappa+1)$. Hence, these paths also $\zeta$-fellow travel in
$\ga(G,\cC)$, as $\cA \subseteq \cC$. It follows that the original paths $p$
and $q$ $\lambda$-fellow travel in $\ga(G,\cC)$, where $\lambda\coloneq
\zeta+2\varkappa$. Thus $(\cC,\cN)$ is a biautomatic structure on $G$.

By our convention, the biautomatic structure $(\cA,\cL)$ on $G$ is
finite-to-one, and, by the above construction, there are only finitely
many possibilities for re-writing each word $W \in \cL'$ as a word in
$\cB^*$, hence the biautomatic structure $(\cB,\cM)$ on $H$ is also
finite-to-one. It follows that the biautomatic structure $(\cC,\cN)$ on
$G$ is finite-to-one as well.
\end{proof}

The new biautomatic structure $(\cC, \cN)$ on $G$ naturally extends the
biautomatic structure $(\cB,\cM)$ on $H$, and the Cayley graph
$\ga(H,\cB)$ is a subgraph of the Cayley graph $\ga(G,\cC)$.  This allows
us to define the action of the commensurator $\comm_G(H)$ on the
boundary of $\cM$ as follows. Let $(U_i)_{i \in \N}$ be a sequence of
words from $\cM$ tending to infinity and representing a boundary point
$\alpha \in \partial \cM$, and let $g \in \comm_G(H)$.  Since $H \cap
gHg^{-1}$ has finite index in $gHg^{-1}$, there exist $f_1,\dots,f_k
\in G$ such that $gHg^{-1} \subseteq H f_1 \cup \dots \cup H f_k$,
thus, for any $gh \in gH$ there is $e \in G$, with $|e|_\cC \le
\max\{|f_j g|_\cC \mid j=1,\dots,k\}<\infty$, such that $ghe \in H$.

Note that $g\,\nu(U_i)\in gH$ for all $i \in \N$, and choose any
sequence of elements $e_i \in G$, $i \in \N$, such that
$\sup\{|e_i|_\cC\mid i \in \N\} <\infty$ and $g\, \nu(U_i)\, e_i \in H$
for all $i \in \N$. Let $V_i \in \cM$ be any word representing $g\,
\nu(U_i)\, e_i$, $i \in \N$. We claim that the sequence of words
$(V_i)_{i \in \N}$ tends to infinity and define its equivalence class
in $\partial \cM$ to be the result of the action of $g$ on $\alpha$.

\begin{lemma}\label{lem:action_of_comm_on_bd} The above construction gives a well-defined action of
  $\comm_G(H)$ on $\partial \cM$.
\end{lemma}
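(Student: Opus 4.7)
The proof requires four verifications: that $(V_i)$ tends to infinity, that its equivalence class in $\partial\cM$ is independent of the choices of $e_i$ and $V_i$, that it depends only on $\alpha=[U_i]$, and that the resulting map $\comm_G(H)\times\partial\cM\to\partial\cM$ satisfies the action axioms. I expect the first of these to be the main technical step and the others to reduce to variants of the same bounded-perturbation theme.

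For the first, $|V_i|\to\infty$ follows by standard means: each $V_i\in\cM$ is quasi-geodesic in $\ga(H,\cB)$, so $|V_i|$ is comparable to $|h_i|_\cB$ where $h_i=g\,\nu(U_i)\,e_i$; quasiconvexity of $H$ in $(G,\cC)$ makes $\d_\cB$ and $\d_\cC|_H$ Lipschitz-comparable, and $|h_i|_\cC\ge|\nu(U_i)|_\cC-|g|_\cC-\sup_i|e_i|_\cC$, with $|\nu(U_i)|_\cC\to\infty$ because $U_i\in\cM$ is quasi-geodesic and $|U_i|\to\infty$. The harder claim is that $V_i$ $\zeta'$-follows $V_j$ for $i\le j$. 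My plan is to start from the hypothesis that $U_i$ $\zeta_0$-follows $U_j$ in $\ga(H,\cB)$, pass to $\ga(G,\cC)$, left-translate by $g$, and append bounded-length $\cC$-paths labelled by words for $e_i,e_j$; this produces auxiliary $\cC$-paths from $g$ to $h_i,h_j$ whose initial segments fellow-travel. Prepending a fixed $\cC$-path from $1_G$ to $g$ gives auxiliary paths from $1_G$ to $h_i,h_j$ with the same property. These are not $\cN$-words, so biautomaticity does not apply to them directly; however, by Lemma~\ref{lem:new_biaut_struc}, $(\cC,\cN)$ is biautomatic, so any two $\cN$-words from $1_G$ ending at $h_i$ fellow-travel, and similarly at $h_j$. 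Choosing comparison $\cN$-words (for instance rewritings of suitable $\cL$-words for $h_i,h_j$ that track the auxiliary paths) transfers the fellow-traveler bound from the auxiliary pairs to $V_i,V_j$, first in $\ga(G,\cC)$ and then, by Lipschitz-comparability, in $\ga(H,\cB)$.

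For independence of the choice of $V_i$, two $\cM$-words for the same $h_i$ fellow-travel by $(\cB,\cM)$-biautomaticity; changing $e_i$ among the finitely many allowed choices perturbs $h_i$ by a bounded-length element, to which the same argument applies. For independence of the representative of $\alpha$, if $(U_i)$ and $(U'_i)$ are equivalent (bounded Hausdorff distance in $\ga(H,\cB)$), then their $g$-translates followed by bounded right-multiplications have endpoints at bounded Hausdorff distance, and biautomaticity transfers this to the corresponding $V_i,V'_i$. For the action axioms, $g=1_G$ admits the choice $e_i=1_G$, $V_i=U_i$; for composition, the elements $(g_1g_2)\,\nu(U_i)\,e_i$ and $g_1\,\nu(V^{(2)}_i)\,e^{(1)}_i$ (where $V^{(2)}_i$ is the sequence representing $g_2\cdot\alpha$) differ by a bounded-length $\cC$-element, yielding equivalent sequences. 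The main obstacle throughout is the fellow-traveler step above: the auxiliary paths built via left-translation by $g$ are not in the language $\cN$, so biautomaticity cannot be invoked on them directly; the workaround trades the lack of a syntactic structure on the auxiliary paths for an endpoint match with suitable $\cN$-words, and accepts some loss in constants when converting between the $\cC$- and $\cB$-metrics.
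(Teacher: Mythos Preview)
Your overall plan is sound, and most of the pieces work as you describe, but the central fellow-traveling step takes an unnecessary detour that creates the obstacle you yourself flag, and your proposed workaround does not close the gap.

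The point you are missing is that the translated path starting at $g$ and labelled by $U_i$ is \emph{already} an $\cN$-labelled path, because $U_i\in\cM\subseteq\cN$. There is no need to concatenate it with words for $g$ and $e_i$ to manufacture an auxiliary path from $1_G$; doing so destroys the $\cN$-structure and produces exactly the obstacle you describe. Instead, let $p_i$ be the path in $\ga(G,\cC)$ starting at $g$ and labelled by $U_i$, and let $q_i$ be the path starting at $1_G$ labelled by $V_i$. Both labels lie in $\cN$, and $\d_\cC((p_i)_-,(q_i)_-)=|g|_\cC$ and $\d_\cC((p_i)_+,(q_i)_+)=|e_i|_\cC$ are uniformly bounded, so condition~(iii$'$) of the biautomatic structure $(\cC,\cN)$ gives a uniform $\eta$ with $p_i$ and $q_i$ $\eta$-fellow traveling. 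Since $p_i$ $\zeta$-follows $p_j$ for $i\le j$ (this is just the left-translate of the hypothesis on $(U_i)$), one obtains that $q_i$ $(2\eta+\zeta)$-follows $q_j$ in $\ga(G,\cC)$, and then in $\ga(H,\cB)$ after the metric conversion you already describe. This is precisely the paper's argument.

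Your workaround---producing comparison $\cN$-words that ``track the auxiliary paths''---is not justified as stated: biautomaticity guarantees that two $\cN$-labelled paths with nearby endpoints fellow-travel \emph{each other}, but says nothing about an $\cN$-labelled path fellow-traveling an arbitrary concatenated path sharing its endpoints. To make the tracking rigorous you would in any case have to exploit that the middle segment of your auxiliary path is labelled by $U_i\in\cN$, which brings you back to the direct argument above. The remaining verifications (independence of the choices of $e_i$ and $V_i$, independence of the representative of $\alpha$, and the action axioms) are fine as outlined and match the paper's treatment; the same $p_i$-versus-$q_i$ comparison also streamlines the independence-of-representative step.
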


\begin{proof} Using the notation from the preceding paragraph, let us
  first check that the sequence of words $(V_i)_{i \in \N}$ in $\cM$
  tends to infinity.  Note that $\lim_{i\to \infty} |\nu(U_i)|_\cC =
  \infty$, as $\lim_{i \to \infty}|U_i|= \infty$, and the biautomatic
  structure $(\cC,\cN)$ is finite-to-one by Lemma
  \ref{lem:new_biaut_struc}. We can also observe that
\[|V_i| \ge |g\, \nu(U_i)\, e_i|_\cC \ge |\nu(U_i)|_\cC-|g|_\cC-|e_i|_\cC
\to \infty, \mbox{ as } i \to \infty.\]
Therefore $\lim_{i \to \infty} |V_i|=\infty$.

For each $i \in \N$ let $p_i$ be the path in $\ga(G,\cC)$ starting at
$g$ and labelled by $U_i$. Since the sequence $(U_i)_{i \in \N}$, of
words in $\cM \subseteq \cN$, tends to infinity, there exists $\zeta
\ge 0$ such that $p_i$ $\zeta$-follows $p_j$ in $\ga(G,\cC)$, for any $i,j \in \N$ with
$i \le j$.

Let $q_i$ be the path in $\ga(G,\cC)$ starting at $1_G$ and labelled by
$V_i$, $i \in \N$. Observe that for all $i \in \N$,
$\d_\cC((p_i)_-,(q_i)_-)=|g|_\cC$ and
$\d_\cC((p_i)_+,(q_i)_+)=\d_\cC(g\,\nu(U_i),g\,\nu(U_i)e_i)=|e_i|_\cC$. Since
$\sup\{|e_i|_\cC \mid i \in\N\} <\infty$ and the structure $(\cC,\cN)$ is
biautomatic, there exists $\eta \ge 0$
such that the paths $p_i$ and $q_i$ $\eta$-fellow travel, for all $i
\in \N$. Therefore, for $\lambda \coloneq 2\eta+\zeta$, the path $q_i$
$\lambda$-follows the path $q_j$ in $\ga(G,\cC)$, whenever $i,j \in \N$
and $i \le j$. Let $\lambda' \coloneq \max\{|h|_\cB \mid h \in H,~|h|_\cC
\le \lambda\} < \infty$.  Since each $q_i$ is labelled by a word from
$\cB^*$, $q_i$ $\lambda'$-follows $q_j$ as paths in $\ga(H,\cB)$, whenever
$i \le j$.  Thus the sequence $(V_i)_{i \in \N}$, of words in $\cM$,
tends to infinity.

Now, suppose that $(U_i')_{i \in \N}$ is another sequence of words in
$\cM$ which tends to infinity and is equivalent to the sequence
$(U_i)_{i \in \N}$. For each $i \in \N$ choose an arbitrary element
$e_i' \in G$ such that $\sup\{|e_i'|_\cC\mid i \in \N\} <\infty$ and
$g\, \nu(U_i') \, e_i' \in H$, and let $V_i'$ be any word from $\cM$
representing the element $g\, \nu(U_i') \, e_i'$. Let us show that the
sequence $(V_i')_{i \in \N}$ is equivalent to the sequence $(V_i)_{i  \in \N}$.

Let $p_i'$ be the path in $\ga(G,\cC)$ starting at $g$ and labelled by
$U_i'$, and let $q_i'$ be the path in $\ga(G,\cC)$ starting at $1_G$ and
labelled by $V_i'$, $i \in \N$.  By the assumptions, there exists
$\theta \ge 0$ such that the Hausdorff distance between the sequences
$(p_i)_{i \in \N}$ and $(p_i')_{i \in \N}$ is at most $\theta$. On the
other hand, the argument above shows that the paths $p_i'$ and $q_i'$
$\eta'$-fellow travel, for some $\eta' \ge 0$ and all $i \in \N$.  It
follows that the Hausdorff distance between the sequences $(q_i)_{i
  \in \N}$ and $(q_i')_{i \in \N}$ does not exceed $\theta+\eta+\eta'$
in $\ga(G,\cC)$. Since $q_i$ and $q_i'$ are labelled by words from $\cB^*$,
for each $i \in \N$, the Hausdorff distance between the sequences of
these paths is also bounded in $\ga(H,\cB)$ by the constant $\max\{|h|_\cB
\mid h \in H,~|h|_\cC \le \theta+\eta+\eta'\}$. Thus the sequences of
words $(V_i)_{i \in \N}$ and $(V_i')_{i \in \N}$ indeed give rise to
the same boundary point in $\partial \cM$. This shows that the above
action is well-defined.

It remains to check that the axioms of a group action are
satisfied. Let $g,g' \in \comm_G(H)$ and let $(U_i)_{i\in \N}$ be a
sequence of words from $\cM$ converging to a boundary point $\alpha
\in \partial\cM$.  Obviously $1_G\,\alpha=\alpha$, so this axiom is
satisfied. On the other hand, by the definition of the action, the point
$g'(g \, \alpha) \in \partial \cM$ is obtained from a sequence of
words in $\cM$ representing the elements $g' g \, \nu(U_i)\, e_i
e_i'$, where $e_i,e_i' \in G$, $i \in \N$, $\sup\{|e_i|_\cC,|e_i'|_\cC
\mid i \in \N\}<\infty$ and $g\, \nu(U_i)\, e_i \in H$, $g' g \,
\nu(U_i)\, e_i e_i' \in H$, for all $i \in \N$. Set $e_i''\coloneq e_i
e_i' \in G$, and observe that since $|e_i''|_\cC \le |e_i|_\cC+|e_i'|_\cC$
for all $i \in \N$, $\sup\{|e_i''|_\cC \mid i \in
\N\}<\infty$. Therefore the boundary point $(g'g)\, \alpha$ can be
obtained from the same sequence of words in $\cM$, representing the
same elements $g' g \, \nu(U_i)\, e_i''$, $i \in \N$, that were used
for the point $g'(g \, \alpha)$. Thus $(g'g)\, \alpha=g'(g\, \alpha)$,
which completes the proof of the lemma.
\end{proof}

\section{The case of an abelian subgroup}\label{sec:abgp}
In this section we will assume that $G$ is a group equipped with a
biautomatic structure $(\cC, \cN)$, and $H\leqslant G$ is a finitely generated abelian
subgroup with a biautomatic structure $(\cB, \cM)$, where
$\cB \subseteq
\cC$ and $\cM=\cN \cap \cB^*$. As explained in Section
\ref{sec:act_of_comm}, we can find such biautomatic structures on $G$
and $H$ starting from any biautomatic structure $(\cA,\cL)$ on $G$, as
long as $H$ is $\cL$-quasiconvex. We also define the action of the
commensurator $\comm_G(H)$ on the boundary $\partial\cM$ as explained
in that section. Let $\mu:\cB^* \to H$ and $\nu:\cC^*\to G$ denote the
monoid homomorphisms sending the words to the group elements they
represent.

The next theorem is the main result of this section.

\begin{thm}\label{thm:one_elt_fixing_bd} Using the notation from the
  beginning of the section, suppose that an element $g \in \comm_G(H)$
  acts trivially on the boundary $\partial\cM$. Then $g$ centralizes
  a finite-index subgroup of $H$.
\end{thm}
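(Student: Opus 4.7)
The plan is to show that the image $\alpha$ of $g$ in $\comm(H)\cong GL(H\otimes\Q)$ (see Section~\ref{subsec:comm}) is the identity, which is precisely equivalent to $g$ centralizing a finite-index subgroup of $H$. My strategy has three stages: (i)~every label $W$ of a non-trivial cycle in the automaton $\mathfrak{A}$ for $\cM$ will give an eigenvector $\mu(W)$ of $\alpha$ with positive rational eigenvalue; (ii)~such cycle labels span $H\otimes\Q$; (iii)~a dynamical estimate based on the quasi-isometric embedding of $H$ into $G$ forces every eigenvalue to be $1$, so that $\alpha=\mathrm{id}$.

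The heart of the argument is stage~(i). For a cycle $W$ based at a state $x$ of $\mathfrak{A}$, I would choose simple paths $s$ from the initial state to $x$ and $t$ from $x$ to an accept state, with labels $S$ and $T$. By Lemma~\ref{lem:bd_pt_from_cycle}, the sequence $(SW^iT)_{i\in\N}$ tends to infinity and represents a boundary point $\beta\in\partial\cM$. The hypothesis $g\beta=\beta$, together with the construction from Section~\ref{sec:act_of_comm}, yields correction elements $e_i\in G$ of uniformly bounded $\cC$-length with $\tilde h_i\coloneq g\,\mu(SW^iT)\,e_i\in H$, plus words $V_i\in\cM$ representing $\tilde h_i$ whose sequence is equivalent to $(SW^iT)$. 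Since the bounded $\cC$-ball is finite, I pass to a subsequence on which $e_i=e$ is constant. Setting $w_0\coloneq\mu(W)$, abelianness of $H$ gives $\tilde h_i\,\tilde h_1^{-1}=gw_0^{i-1}g^{-1}\in H$, so $u_1\coloneq gw_0g^{-1}\in H$ and $\tilde h_i=\tilde h_1\,u_1^{i-1}$. Every vertex of the path labelled by $SW^jT$ lies within $|S|+|W|+|T|$ of some $\mu(S)w_0^k$ with $k\ge 0$, so the bounded Hausdorff $\cB$-distance between the two sequences of paths yields integers $k(i)\ge 0$ with $|(i-1)u_1-k(i)w_0|_\cB=O(1)$ in $H$. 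Since $w_0$ must have infinite order (otherwise the infinitely many distinct words $SW^iT$ would represent only finitely many elements, contradicting the finite-to-oneness of $(\cB,\cM)$), this bound forces $u_1\in\Q w_0$ in $H\otimes\Q$; positivity of $k(i)$ ensures that the scalar $r_W$ with $\alpha(w_0)=r_W w_0$ is positive.

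Stage~(ii) will be a pumping argument: iteratively removing cycles from an accepted path leaves a simple path from the initial state to an accept state of length at most the number of states of $\mathfrak{A}$, and only finitely many such simple paths exist, so $H$ is covered by finitely many cosets of the subgroup generated by cycle labels. Combined with~(i), $\alpha$ is diagonalizable over $\Q$ with positive rational eigenvalues. For stage~(iii), note that $g^{-1}\in\comm_G(H)$ also fixes $\partial\cM$ pointwise and has image $\alpha^{-1}$, so it suffices to rule out any eigenvalue $r=p/q>1$ of $\alpha$. Pick any nonzero eigenvector $h_0\in H$ for $r$ and set $h\coloneq q^k h_0\in H$; then $g^k h g^{-k}=p^k h_0\in H$. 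The bound $|g^k h g^{-k}|_\cC\le 2k|g|_\cC+|h|_\cC\le 2k|g|_\cC+O(q^k)$, combined with the fact that the $\cL$-quasiconvex subgroup $H$ is quasi-isometrically embedded in $G$ (a standard consequence of biautomaticity plus quasiconvexity), which gives $|p^k h_0|_\cC\ge c\,p^k$, forces $(p/q)^k$ to remain bounded, contradicting $p>q$ for large $k$. Hence every eigenvalue of $\alpha$ is $1$; diagonalizability then gives $\alpha=\mathrm{id}$, and $g$ centralizes a finite-index subgroup of $H$. The principal obstacle is stage~(i), where the purely combinatorial equivalence of boundary sequences must be translated into the sharp algebraic condition $u_1\in\Q w_0$.
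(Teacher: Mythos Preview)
Your argument is correct and reaches the conclusion, but by a genuinely different route from the paper's.  Your stage~(ii) is exactly the paper's Lemma~\ref{lem:simple_cycles_gen_a_fi_sbgp}.  The difference lies in how the cycle labels are handled.  The paper's Lemma~\ref{lem:fix_bd_pt->centr_power} proves in one stroke that $g$ centralizes some power $h^m$ of $h=\mu(W)$ (so your eigenvalue $r_W$ is already forced to equal~$1$): it uses the \emph{two-sided} fellow-traveller property of $(\cC,\cN)$ to make the path starting at $g$ with label $SW^iT$ synchronously fellow-travel the path starting at $1_G$ with label $SW^{k(i)}T$, and then a pigeonhole on the finitely many possible offset elements $v_l^{-1}u_l\in G$ produces a periodicity giving $gh^m=h^mg$.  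You instead extract from the (asynchronous) Hausdorff equivalence of boundary sequences only the weaker fact that $h$ is an eigenvector with some positive rational eigenvalue, and then invoke a separate growth estimate (stage~(iii)), based on the quasi-isometric embedding of $H$ in $G$, to pin the eigenvalue to~$1$.  The paper's route is shorter and stays inside the Cayley-graph picture; yours decouples direction from scale and brings out the underlying linear algebra more explicitly.

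One small imprecision in stage~(iii): the element $q^kh_0$ need not lie in the domain $H\cap g^{-k}Hg^{k}$ on which conjugation by $g^k$ agrees with $\alpha^k$, so the identity $g^k(q^kh_0)g^{-k}=p^kh_0$ is not automatic.  The repair is immediate: with $h_0$ primitive, let $M_k$ be minimal with $M_kh_0\in H\cap g^{-k}Hg^{k}$; then $g^k(M_kh_0)g^{-k}=(M_kp^k/q^k)h_0\in H$ forces $q^k\mid M_k$, so $M_k\ge q^k$, and your inequality becomes $c'(p/q)^k\le 2k|g|_\cC/M_k+C\le 2k|g|_\cC/q^k+C$, which still yields the desired contradiction.
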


The proof of Theorem \ref{thm:one_elt_fixing_bd} will require two
auxiliary statements.

\begin{lemma}\label{lem:fix_bd_pt->centr_power} Suppose that $g \in
  \comm_G(H)$ fixes a simple boundary point $\alpha \in \partial \cM$,
  given by a sequence of words $(SW^iT)_{i \in \N}$ in $\cM$ tending
  to infinity, where $S,W,T \in \cB^*$. Then there exists $m \in \N$
  such that $g h^m g^{-1}=h^m$ in $G$, where $h \in H$ is the element
  represented by the word $W$.
\end{lemma}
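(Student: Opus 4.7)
My plan is to show that the image $\tilde g \in \comm(H)\cong GL(H\otimes\Q)$ of $g$ under the natural map fixes $\bar h\in H\otimes\Q$; once this is established, a short computation absorbing the finite torsion of $H$ produces the required $m$. First I would note that finite-to-oneness of $(\cB,\cM)$ combined with $|SW^iT|\to\infty$ forces $h$ to have infinite order, so $\bar h\neq 0$. Pick $N\in\N$ with $h^N\in H_0\coloneq\{x\in H:gxg^{-1}\in H\}$, set $h''\coloneq gh^Ng^{-1}\in H$, and let $(V_i)\subset\cM$ be a sequence representing $gsh^ite_i$ with $|e_i|_\cC$ uniformly bounded, as constructed in Section~\ref{sec:act_of_comm}. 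A direct computation using $H$ abelian shows that, for $i=Nk$,
\[
gsh^{Nk}te_{Nk} \;=\; (h'')^k\cdot gste_{Nk} \;=\; (h'')^k d_k,
\]
where $d_k\coloneq gste_{Nk}\in H$ has uniformly bounded $\cB$-norm: quasiconvexity makes $H$ undistorted in $G$, so bounded $\cC$-norm yields bounded $\cB$-norm.

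The next step is the direction argument. Since $g\cdot\alpha=\alpha$ we have $(V_i)\sim(SW^iT)$, so the endpoint $(q_{Nk})_+=(h'')^kd_k$ lies within bounded $\cB$-distance of a vertex $sh^{j(k)}$ on some $p_i$ with $j(k)\ge 0$. Translating to $H\otimes\R$ (where $\cB$-distance is bi-Lipschitz equivalent to a norm modulo torsion), this reads $\|kN\tilde g(\bar h)-j(k)\bar h\|\le C$ uniformly in $k$. If $\tilde g(\bar h)\notin\R\bar h$, the orthogonal component of $kN\tilde g(\bar h)$ grows linearly in $k$ and breaks this bound, so $\tilde g(\bar h)=\lambda\bar h$ for some $\lambda\in\R$. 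Non-negativity of $j(k)$ rules out $\lambda<0$, and $\lambda=0$ would trap $(q_{Nk})_+$ in a bounded subset of $H$, contradicting the fact that $|V_{Nk}|\to\infty$ produces infinitely many distinct endpoints via finite-to-oneness. So $\lambda>0$, and since $\tilde g$ is $\Q$-linear, $\lambda\in\Q$; write $\lambda=p/q$ in lowest terms with $p,q\in\N$.

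The main obstacle, I expect, is promoting $\lambda\in\Q_{>0}$ to $\lambda=1$, and here the full biautomatic structure $(\cC,\cN)$ on $G$ enters through word-length geometry. From $\tilde g(\overline{h^{Nq}})=\overline{h^{Np}}$ one deduces $gh^{Nq}g^{-1}=h^{Np}\tau$ for some $\tau\in H_{\mathrm{tors}}$. Quasiconvexity of $H$ forces $\langle h\rangle$ to be undistorted in $G$, so the translation length $\sigma\coloneq\lim_n|h^n|_\cC/n$ is strictly positive and $|h^n|_\cC=n\sigma+o(n)$. On one hand, $|h^{Np}\tau|_\cC=|h^{Np}|_\cC+O(1)=Np\sigma+o(N)$; on the other, conjugation by $g$ changes $\cC$-length by at most $2|g|_\cC$, so $|gh^{Nq}g^{-1}|_\cC=|h^{Nq}|_\cC+O(1)=Nq\sigma+o(N)$. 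Equating the two expressions and dividing by $N$ forces $p=q$, i.e.\ $\lambda=1$.

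Finally, $\lambda=1$ gives $gh^Ng^{-1}=h^N\tau'$ for some $\tau'\in H_{\mathrm{tors}}$. Taking $m$ to be any common multiple of $N$ and the (finite) order of $H_{\mathrm{tors}}$ yields $(\tau')^{m/N}=1$, whence
\[
gh^mg^{-1} \;=\; (gh^Ng^{-1})^{m/N} \;=\; (h^N\tau')^{m/N} \;=\; h^m(\tau')^{m/N} \;=\; h^m,
\]
which completes the proof.
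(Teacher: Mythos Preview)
Your approach is genuinely different from the paper's and, with one step tightened, it works. The paper argues purely with fellow-travelling paths in $\Gamma(G,\cC)$: it shows that the path starting at $g$ with label $SW^iT$ and the path starting at $1_G$ with label $SW^{k(i)}T$ (for suitable $k(i)$) have endpoints at uniformly bounded distance, hence $\lambda$-fellow travel by biautomaticity of $(\cC,\cN)$; a pigeonhole argument along the $W$-periodic portions of these two paths then produces indices $l_1<l_2$ at which the ``offset'' elements $v_l^{-1}u_l$ coincide, and this repetition directly yields an element $z$ commuting with $h^m$ for $m=l_2-l_1$, with $g$ conjugate to $z$ by an element of the abelian group $H$. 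No passage to $H\otimes\Q$ or to translation lengths is needed. Your route instead pushes the problem into the abstract commensurator $GL(H\otimes\Q)$ and uses linear algebra plus an undistortion argument; it is more structural, and it makes transparent why the eigenvalue must be~$1$, at the cost of the extra translation-length machinery.

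The step that needs repair is the promotion of $\lambda$ to $1$. As written, $N$, $p$, $q$ are all \emph{fixed}, so $gh^{Nq}g^{-1}=h^{Np}\tau$ is a single equation, and the expressions $Np\sigma+o(N)$ and $Nq\sigma+o(N)$ carry no asymptotic content: ``dividing by $N$'' yields nothing. The fix is immediate, however. Since $H$ is abelian, iterating gives $gh^{kNq}g^{-1}=(h^{Np}\tau)^k=h^{kNp}\tau^k$ for every $k\in\N$, and now letting $k\to\infty$ (with $|\tau^k|_\cC$ uniformly bounded because $H_{\mathrm{tors}}$ is finite) yields $kNq\sigma+O(1)=kNp\sigma+O(1)$, whence $p=q$. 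Equivalently, one can simply invoke conjugation-invariance of translation length in $(G,\d_\cC)$: since some power of $h^{Np}\tau$ equals a power of $h$, the translation lengths of $h^{Nq}$ and $h^{Np}\tau$ are $Nq\sigma$ and $Np\sigma$ respectively, and these must agree.
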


\begin{proof} Since $g \in \comm_G(H)$, we can choose elements $e_i \in
  G$, $i \in \N$, so that $\eta \coloneq \sup\{|e_i|_\cC \mid i \in \N\} < \infty$ and $g\, \nu(S  W^iT)\, e_i \in H $ for all $i \in \N$.  By the definition of the
  action, given in Section~\ref{sec:act_of_comm}, $g\,\alpha$ is the
  equivalence class of a sequence of words $(V_i)_{i \in \N}$, where
  $V_i \in \cM $ represents the element $g\, \nu(S W^iT)\, e_i$, $i
  \in \N$.

For each $i \in \N$ let $p_i$ be the path in $\ga(G,\cC)$ starting at
$g$ and labelled by the word $SW^iT$, let $q_i$ and $r_i$ be the paths
in $\ga(G,\cC)$ starting at $1_G$ and labelled by the words $V_i$ and
$SW^iT$, respectively (see Figure \ref{fig:1}). Note that
$\d_\cC((p_i)_+,(q_i)_+)=|e_i|_\cC\le \eta$ and, since $g\,\alpha=\alpha$,
there are $\theta \ge 0$, independent of $i$, and $j=j(i) \in \N$,
such that $(q_i)_+$ is at most $\theta$-away from some vertex of
$r_j$. Obviously, any vertex of $r_j$ is at most $\varkappa$-away from
$(r_k)_+$, for some $k \le j$ and $\varkappa \coloneq |S|+|W|+|T|$.

\begin{figure}[!ht]
  \begin{center}
   \includegraphics{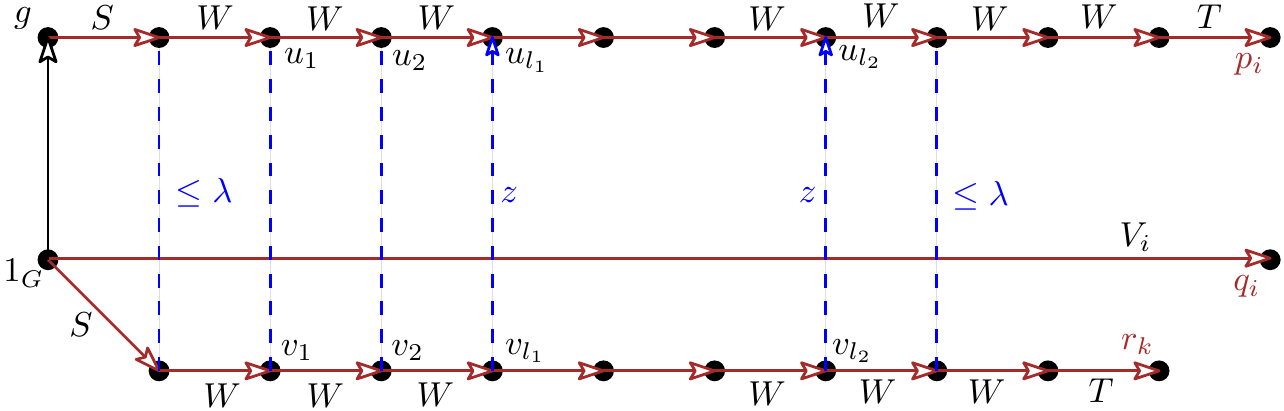}
  \end{center}
  \caption{}\label{fig:1}
\end{figure}

Thus for every $i \in \N$ there is $k=k(i) \in \N$ such that
$\d_\cC((p_i)_+),(r_k)_+) \le \eta+\theta+\varkappa$. On the other hand,
recall that $\d_\cC((p_i)_-,(r_k)_-)=\d_\cC(g,1_G)=|g|_\cC$. Since the paths
$p_i$ and $r_k$ are both labelled by words from $\cM\subseteq \cN$ and
$(\cC,\cN)$ is a biautomatic structure on $G$, there is $\lambda \ge 0$
such that these paths $\lambda$-fellow travel in $\ga(G,\cC)$ for all $i
\in \N$ and $k=k(i)$.

Now, the word $W$ cannot be empty by the assumptions, and since the
structure $(\cC,\cN)$ is finite-to-one, $\nu(W)=h$ must have infinite
order in $G$. It follows that $\lim_{i \to \infty}
\d_\cC((p_i)_-,(p_i)_+)=\infty$, hence $k(i)$ must also tend to infinity
as $i \to \infty$. Therefore there is $i_0 \in \N$ such that both
$i_0$ and $k_0\coloneq k(i_0)$ are greater than $\xi \coloneq |\{f \in
G \mid |f|_\cC \le \lambda\}|+1$.

For each $l \in \{1,\dots,\xi\}$ let $u_l \in G$ be the vertex of
$p_{i_0}$ such that the subpath of $p_{i_0}$ from $g=(p_{i_0})_-$ to
$u_l$ is labelled by $SW^l$.  Similarly, we define $v_l\in G$ to be the
vertex of the path $r_{k_0}$ such that the subpath of $r_{k_0}$ from
$1_G=(r_{k_0})_-$ to $v_l$ is labelled by $SW^l$.  By construction,
$|v_l^{-1}u_l|_\cC=\d_\cC(u_l,v_l) \le \lambda$ for every $l=1,\dots,\xi$,
and the definition of $\xi$ implies that there must exist indices
$l_1,l_2$, $1 \le l_1 < l_2 \le \xi$, such that
$v_{l_1}^{-1}u_{l_1}=v_{l_2}^{-1}u_{l_2}$ in $G$.  Set $z\coloneq
v_{l_1}^{-1}u_{l_1}$, then the quadrilateral in $\ga(G,\cC)$ with
vertices $v_{l_1}$, $u_{l_1}$, $u_{l_2}$ and $v_{l_2}$ (see Figure
\ref{fig:1}) gives rise to the equality $z\, \nu(W^m)\, z^{-1}
=\nu(W^m)$ in $G$, where $m \coloneq l_2-l_1 \in \N$. Thus $z$
commutes with $h^m$ in $G$. On the other hand, the quadrilateral with
vertices $1_G$, $g$, $u_{l_1}$ and $v_{l_1}$ in $\ga(G,\cC)$ gives rise
to the equality $g= \nu(SW^{l_1})\, z \,\nu(SW^{l_1})^{-1}$ in
$G$. Since $SW^{l_1}$ is a word from $\cB^*$ and $H$ is abelian, the
element $\nu(SW^{l_1})$ commutes with $h^m$ in $G$, therefore $g$ also
commutes with $h^m$ and the lemma is proved.
\end{proof}

\begin{lemma}\label{lem:simple_cycles_gen_a_fi_sbgp} Let $\mathfrak A$
  be a finite state automaton accepting the language $\cM$, and let
  $D$ be the set of elements of $H$ represented by the labels of
  non-trivial simple cycles in $\mathfrak A$. Then $D$ generates a
  finite-index subgroup of $H$.
\end{lemma}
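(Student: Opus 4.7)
The strategy is to produce a finite set $F \subseteq H$ such that $H = F \cdot \langle D \rangle$, which immediately gives $[H : \langle D \rangle] \le |F| < \infty$. Let $x_0$ denote the initial state of $\mathfrak A$, and take $F$ to be the (finite) set of elements of $H$ represented by the labels of simple paths in $\mathfrak A$ from $x_0$ to an accept state. Since $\mathfrak A$ has finitely many states, $F$ is indeed finite.

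Given an arbitrary $h \in H$, choose a word $W \in \cM$ with $\mu(W) = h$ (such a word exists because $\mu(\cM) = H$), and let $p$ be the corresponding path in $\mathfrak A$ from $x_0$ to some accept state. I will show, by strong induction on $|p|$, that $h \in F \cdot \langle D \rangle$. If $p$ is a simple path, then $h \in F$ by definition. Otherwise $p$ revisits some state $y$, say at positions $i < j$; let $c$ be the closed subwalk of $p$ between these two visits and let $p'$ be the path obtained from $p$ by excising $c$. Then $p'$ is a strictly shorter path from $x_0$ to the same accept state, so its label belongs to $\cM$ and represents some element $h' \in F \cdot \langle D \rangle$ by the inductive hypothesis. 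Since $H$ is abelian, $h$ equals $h'$ times the element of $H$ represented by the label of $c$, so it suffices to prove that the label of any non-trivial closed walk in $\mathfrak A$ represents an element of $\langle D \rangle$.

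This auxiliary statement is proved by a second induction, on the length of the closed walk. A non-trivial simple closed walk contributes an element of $D$ by definition. A non-simple one, based at a state $y$, must revisit some vertex $z$: if $z = y$, the walk splits as the concatenation of two strictly shorter non-trivial closed walks based at $y$; if $z \neq y$, then excising the closed subwalk at $z$ leaves a strictly shorter closed walk based at $y$, and the excised piece is itself a strictly shorter non-trivial closed walk at $z$. In either case, commutativity of $H$ lets us write the element represented by the label of the original walk as a product of elements represented by the labels of strictly shorter closed walks, completing the induction.

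There is no real obstacle here beyond the elementary graph-theoretic decomposition of walks into a simple path plus simple cycles; the essential use of the hypothesis that $H$ is abelian is that it allows one to freely rearrange the contributions of cycles that, as subwords of the original word, are interleaved with one another and with the backbone simple path.
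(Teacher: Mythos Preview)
Your proof is correct and follows essentially the same approach as the paper: both show $H = F\cdot\langle D\rangle$ for a finite set $F$ by inductively stripping cycles from an accepting path, using commutativity of $H$ to move the cycle contributions to the right. The only differences are cosmetic: the paper takes $F=\{h\in H : |h|_\cB\le n\}$ (where $n$ is the number of states) rather than your set of simple-path labels, and it extracts a \emph{simple} cycle directly at each inductive step (via the first-repeated-state argument) instead of first pulling off an arbitrary closed subwalk and then running your second induction to break that into simple cycles.
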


\begin{proof} Let $n$ be the number of states in the automaton
  $\mathfrak A$. We claim that $H \subseteq E \langle D \rangle$,
  where $E \coloneq \{h \in H \mid |h|_\cB \le n\}$.

Indeed, choose any element $f \in H$ and let $V$ be the shortest word
from $\cM$ representing $f$ in $H$. We will prove that $f \in E\langle
D \rangle$ by induction on the length of $V$.  If $|V| \le n$ then $f
\in E$. Otherwise, if $|V|>n$, any path $v$ in $\mathfrak{A}$ labelled
by $V$, from the initial state to an accept state of $\mathfrak A$
(which exists as this automaton accepts $\cM$), must contain a
non-trivial simple cycle based at some state $x$ and labelled by a word
$W$, with $\mu(W) \in D$.  Thus $V=S W T$, where $S$, $T$ are labels
of some subpaths of $v$ ending and beginning at $x$ respectively.
Clearly $ST \in \cM$, as this word is accepted by $\mathfrak A$, but
its length is strictly smaller than the length of $V$. Moreover, since
$H$ is abelian, we have $f=\mu(V)=\mu(ST)\mu(W) \in \mu(ST)D$.  By the
induction hypothesis, $\mu(ST) \in E\langle D \rangle $, so $f \in E
\langle D \rangle D=E\langle D \rangle$, and the claim is proved.

Since $E$ is a finite set, by definition, and $\langle D \rangle
\leqslant H$, the inclusion $H \subseteq E\langle D \rangle$ implies
that $|H:\langle D \rangle|<\infty$, as required.
\end{proof}

\begin{proof}[Proof of Theorem \ref{thm:one_elt_fixing_bd}] Let
  $\mathfrak A$ be a finite state automaton accepting the language
  $\cM$, and let $\mathcal{W}=\{W_1,\dots,W_n\}$ be the list of the
  labels of all non-trivial simple cycles in $\mathfrak A$. Let $D$
  denote the (finite) set of elements of $H$ represented by the words
  from $\mathcal W$.

Take any $h \in D$, then $h=\mu(W_k)$ for some $k \in
\{1,\dots,n\}$. Let $x$ be the state of $\mathfrak A$ at which a cycle
labelled by $W_k$ is based, and choose some simple paths $S$ and $T$
joining the initial state of $\mathfrak A$ with $x$ and $x$ with an
accept state of $\mathfrak A$ respectively.  Then, according to
Lemma~\ref{lem:bd_pt_from_cycle}, the sequence of words $(SW_k^i T)_{i
  \in \N}$ converges to some point $\alpha \in \partial \cM$. By the
assumptions, $g\,\alpha=\alpha$, so we can use Lemma
\ref{lem:fix_bd_pt->centr_power} to conclude that $gh^m g^{-1}=h^m$ in
$G$, for some $m=m(h) \in \N$. Since the latter holds for every $h \in
D$ and $|D|<\infty$, we can find a single $l \in \N$ such that $g$
commutes with $h^l$ for all $h \in D$.

Now, the elements $\{h^l \mid h \in D\} $ obviously generate a
finite-index subgroup $H'$ of the finitely generated abelian group $\langle D
\rangle$, which itself has finite index in $H$, by Lemma
\ref{lem:simple_cycles_gen_a_fi_sbgp}. Thus $|H:H'|< \infty$ and $H'$
is centralized by $g$, as required.
\end{proof}

We can now prove Theorem \ref{thm:comm_of_ab_qc_sbgp} stated in the introduction.
\begin{proof}[Proof of Theorem \ref{thm:comm_of_ab_qc_sbgp}]
Define a biautomatic structure $(\cB,\cM)$ on the subgroup $L$ as in Section~\ref{sec:act_of_comm}.
This gives rise to an action of $\comm_G(H)$ on the finite set $\partial \cM$
(see Lemma~\ref{lem:action_of_comm_on_bd} and Corollary~\ref{cor:finite_boundary}), and we denote the kernel of this action by $\comm_G^0(H)$. Then $\comm_G^0 (H)\lhd\comm_G(H)$ and $|\comm_G(H):\comm_G^0(H)|<\infty$.

By Theorem~\ref{thm:one_elt_fixing_bd}, every element of $\comm_G^0(H)$ centralizes a finite-index subgroup of $H$, hence $\comm_G^0(H)$ lies in the kernel of the homomorphism from $\comm_G(H)$ to $\comm(H)$. It follows that the image of $\comm_G(H)$ in $\comm(H)$ is finite.
Any finite subset of $\comm_G^0(H)$ centralizes a finite-index subgroup of $H$, thus the same holds for any finitely generated subgroup $F \leqslant \comm_G^0(H)$.
\end{proof}

The main examples of $\cL$-quasiconvex subgroups in biautomatic groups are centralizers of finite subsets (see \cite[Corollary~8.3.5 and Theorem~8.3.1]{Eps}). Therefore the
following statement is an immediate corollary of Theorem~\ref{thm:comm_of_ab_qc_sbgp}.

\begin{cor}\label{cor:comm_of_centralizer} Let $G$ be a biautomatic group and let $X \subseteq G$ be a finite subset such
that $H \coloneq\mathrm{C}_G(X)$ is abelian. Then there is a finite-index subgroup
 $\comm_G^0(H) \lhd\comm_G(H)$ such that every finitely generated subgroup of $\comm_G^0(H)$
centralizes a finite-index subgroup of $H$ in $G$.
\end{cor}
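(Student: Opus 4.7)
The plan is essentially to reduce this corollary to Theorem~\ref{thm:comm_of_ab_qc_sbgp} by producing a biautomatic structure on $G$ with respect to which $H = \mathrm{C}_G(X)$ is quasiconvex. Once that is done, the hypothesis that $H$ is abelian lets us invoke Theorem~\ref{thm:comm_of_ab_qc_sbgp} verbatim, and the two conclusions in the corollary statement are exactly the two conclusions of that theorem (the first being that the image of $\comm_G(H)$ in $\comm(H)$ is finite, and the second being the statement about finitely generated subgroups of $\comm_G^0(H)$ centralizing a finite-index subgroup of $H$).

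The only non-formal step is the quasiconvexity input. Since $G$ is biautomatic, it admits some biautomatic structure; and by the results of Epstein, Cannon, Holt, Levy, Paterson and Thurston cited immediately before the corollary (\cite[Corollary~8.3.5 and Theorem~8.3.1]{Eps}), every centralizer of a finite subset in a biautomatic group is quasiconvex with respect to some biautomatic structure $(\cA,\cL)$ on $G$. Applied to $X$, this furnishes a biautomatic structure $(\cA,\cL)$ on $G$ for which $H = \mathrm{C}_G(X)$ is $\cL$-quasiconvex. Note that we have no control over which biautomatic structure we end up with, but this does not matter: Theorem~\ref{thm:comm_of_ab_qc_sbgp} only needs the existence of some $(\cA,\cL)$ making $H$ quasiconvex.

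With this quasiconvex abelian subgroup $H \leqslant G$ in hand, Theorem~\ref{thm:comm_of_ab_qc_sbgp} applies and produces a finite-index normal subgroup $\comm_G^0(H) \lhd \comm_G(H)$ such that every finitely generated subgroup of $\comm_G^0(H)$ centralizes a finite-index subgroup of $H$ in $G$. That is precisely the conclusion of the corollary.

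I do not anticipate a genuine obstacle: the corollary is a direct specialization, and both quoted inputs (the quasiconvexity of centralizers from \cite{Eps} and the main Theorem~\ref{thm:comm_of_ab_qc_sbgp}) are already in place. If anything required care, it would be merely to note that the choice of biautomatic structure used inside Theorem~\ref{thm:comm_of_ab_qc_sbgp} is allowed to depend on $X$, but since the statement of the corollary only refers to $\comm_G(H)$ and $H$ as abstract subgroups of $G$, this choice is invisible in the final conclusion.
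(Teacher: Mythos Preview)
Your proposal is correct and matches the paper's own argument essentially verbatim: the paper states just before the corollary that centralizers of finite subsets are $\cL$-quasiconvex by \cite[Corollary~8.3.5 and Theorem~8.3.1]{Eps}, and then declares the corollary an immediate consequence of Theorem~\ref{thm:comm_of_ab_qc_sbgp}. One small sharpening: the cited results actually give quasiconvexity of $\mathrm{C}_G(X)$ with respect to \emph{any} given biautomatic structure on $G$, so there is no need to hedge that the structure might depend on $X$.
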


\begin{rem} In \cite[Proposition~9.1]{huangprytula} Huang and Prytu\l{a} use an example from Wise's thesis \cite{Wise-thesis} to show that there exists a group $G$, acting properly discontinuously, cocompactly and cellularly on a product of two trees, and an infinite cyclic subgroup $H \leqslant G$ such that $\comm_G(H)$ is not finitely generated and does not normalize any finite-index subgroup of $H$.

Since the product of two trees is a \cat{} cube complex, the group $G$ is biautomatic by \cite{nibree}. After analysing the construction it
becomes clear that one can replace $H$ with a commensurable infinite cyclic subgroup to ensure that $H=\mathrm{C}_G(X)$ for some finite subset $X \subseteq G$. Therefore, the examples of  $G$ and $H$ show that it is indeed necessary to pass to finitely generated subgroups of $\comm_G(H)$  in Theorem~\ref{thm:comm_of_ab_qc_sbgp} and Corollary~\ref{cor:comm_of_centralizer}.
\end{rem}

\section{Commensuration and the Flat Torus Theorem}\label{sec:flat_torus}
Let us start this section by recalling the Flat Torus Theorem~\cite[II.7.1]{brihae}.  Throughout this section
we will use additive notation for the group operation on
a free abelian group $L$.

\begin{thm} \label{thm:flattorus}
Let $L$ be a free abelian group of rank $n$ acting properly by
semi-simple isometries on a \cat{} space $X$.  Then:

\begin{enumerate}

\item{} The min set $M$ for $L$ is non-empty and $M=Y\times \E^n$.

\item{} Every $c\in L$ leaves $M$ invariant, respects the product
decomposition, and acts trivially on $Y$ and by translation on
$\E^n$.

\item{} For $y\in Y$, the quotient $(\{y\}\times \E^n)/L$ is an
  $n$-torus.

\item{} If an isometry of $X$ normalizes $L$ then it preserves $M$
and the direct product decomposition.

\item{} If a group $\Gamma$ of isometries of $X$ normalizes $L$, then
a finite-index subgroup of $\Gamma$ centralizes $L$.  If $\Gamma$ is
finitely generated, then $\Gamma L$ has a finite-index subgroup
containing $L$ as a direct factor.

\end{enumerate}

\end{thm}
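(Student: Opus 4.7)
Parts (1)--(3) form the classical Flat Torus Theorem \cite[II.7.1]{brihae}, so my attention would be on (4) and (5). For (4), let $\psi \in \isom(X)$ normalise $L$; for each $c \in L$ the isometry identity $\d(\psi x, (\psi c\psi^{-1})(\psi x)) = \d(x, cx)$ shows that $\psi$ maps the min-set of $c$ onto the min-set of $\psi c \psi^{-1} \in L$, and intersecting over $c\in L$ yields $\psi(M) = M$. The splitting $M = Y \times \E^n$ is canonical, $\E^n$ being the convex hull of the translation axes of the non-trivial elements of $L$, so $\psi$ necessarily respects the product decomposition.

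For the first half of (5) I would examine the conjugation homomorphism $\rho\colon \Gamma \to \aut(L)$. By (4) each $\gamma \in \Gamma$ acts on the $\E^n$-factor as an affine isometry whose linear part $A_\gamma$ permutes $L$ and preserves the Euclidean norm, so $\rho(\Gamma) \subseteq O(n) \cap \aut(L)$. The latter is the isometric automorphism group of the Euclidean lattice $L$ and is therefore finite. Consequently $\Gamma_0 \coloneq \ker\rho$ has finite index in $\Gamma$, and each of its elements acts on $\E^n$ by a pure translation. Since an isometry of $\E^n$ commuting with a spanning set of translations is itself a translation, $\Gamma_0$ centralises $L$.

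For the direct-factor assertion assume $\Gamma$ is finitely generated. Then $\Gamma_0 L$ has finite index in $\Gamma L$, is finitely generated, and contains $L$ as a central subgroup. Because $\Gamma_0$ and $L$ both act on $\E^n$ by translations, there is a translation homomorphism $\tau\colon \Gamma_0 L \to \mathbb{R}^n$ which is injective on $L$, and $[\Gamma_0 L, \Gamma_0 L] \subseteq \ker \tau$ since $\tau$ has abelian image. Hence
\[
L \cap [\Gamma_0 L, \Gamma_0 L] \ \subseteq\ L \cap \ker \tau \ =\ 0,
\]
so $L$ embeds in the abelianisation $A \coloneq (\Gamma_0 L)^{\mathrm{ab}}$ and, being torsion-free, also in $A_0 \coloneq A/\mathrm{Tor}(A)$; write $L_0$ for its image. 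In the free abelian group $A_0$ take $\widehat{L}$ to be the saturation of $L_0$, fix a complement $B$ so that $A_0 = \widehat{L} \oplus B$, set $d \coloneq [\widehat{L} : L_0]$, and put $A' \coloneq L_0 \oplus d B$; then $A'$ has finite index in $A_0$ and admits a retraction $A' \to L_0 \cong L$. Pulling this back along the surjection $\Gamma_0 L \twoheadrightarrow A_0$ yields a retraction $r\colon \widehat{\Gamma} \to L$ defined on the preimage $\widehat{\Gamma} \leqslant \Gamma_0 L$ of $A'$, which has finite index in $\Gamma L$. Centrality of $L$ in $\Gamma_0 L$ then forces $\widehat{\Gamma} = L \times \ker r$, as required.

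The step I expect to demand the greatest care is the vanishing $L \cap [\Gamma_0 L, \Gamma_0 L] = 0$: an abstract central extension by $\mathbb{Z}^n$ of a finitely generated group need not admit a direct-factor splitting on any finite-index subgroup (Heisenberg groups provide such obstructions), and it is precisely the faithful translation action of $\Gamma_0 L$ on $\E^n$ that rules this out. The remainder amounts to standard manipulations with lattices in free abelian groups via saturation.
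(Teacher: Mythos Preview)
The paper does not give a proof of this theorem: it is stated verbatim as the Flat Torus Theorem and attributed to \cite[II.7.1]{brihae}, with no argument supplied. All five parts, including (4) and (5), are quoted from Bridson--Haefliger; the paper then proceeds to prove its own addendum (Theorem~\ref{thm:addendum}) concerning commensurating rather than normalising isometries. So there is no ``paper's own proof'' to compare your attempt against.

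That said, your sketch is essentially the standard argument one finds in \cite{brihae}. A couple of minor comments: in (4) your description of the $\E^n$-factor as ``the convex hull of the translation axes'' is imprecise --- the slices $\{y\}\times\E^n$ are more naturally characterised as the convex hulls of individual $L$-orbits, or via the de Rham--type uniqueness of the Euclidean factor in a \cat{} product; and in the last paragraph the passage to $dB$ rather than $B$ is harmless but unnecessary, since $L_0\oplus B$ already sits inside $\widehat L\oplus B=A_0$ with finite index and retracts onto $L_0$. Your identification of the key non-formal step --- that the faithful translation action on $\E^n$ forces $L\cap[\Gamma_0 L,\Gamma_0 L]=0$, ruling out Heisenberg-type obstructions --- is exactly right.
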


We want an analogous statement to (4) above, but for isometries that
lie in the commensurator of $L$ rather than in its normalizer.  For
this it is easier first to describe a different statement that is
equivalent to~(3).

Recall that a torsor for an abelian group is a non-empty set on which
it acts freely and transitively.  An affine space is naturally a torsor
for its vector space of translations.

\begin{rem}\label{rem:equiv}
Let $L$ be a free abelian group of finite rank $n$, and suppose that
$L$ acts by translations on a finite-dimensional real affine space
$\A$.  The following are equivalent:
\begin{itemize}
\item the action of $L$ is  properly discontinuous and   cocompact;

\item the unique affine extension of the action to $L\otimes \R$
makes $\A$ a torsor for $L\otimes \R$.
\end{itemize}
\end{rem}

In fact, the affine extension to $L\otimes \R$ is free if and only
if the action of $L$ on $\A$ is  properly discontinuous, and in
this case $\A$ is a torsor for $L\otimes \R$ if and only if $\A$ has
dimension~$n$.

Now suppose that one is given a torsor action of a vector space $V$ by
translations on the Euclidean space $\E^n$.  In this case, the
Euclidean distance on $\E^n$ enables one to define an inner product on
$V$, via $\displaystyle \langle v,w\rangle\coloneq\frac12 \bigl(d((v+w)\,x,x)^2- d(v\,x,x)^2-d(w\,x,x)^2\bigr)$, for any $x\in \E^n$.

In particular, with hypotheses and notation as in
Theorem~\ref{thm:flattorus}, we may define an inner product
$\langle \,\cdot\,,\,\cdot\,\rangle_{L}$ on $L\otimes \R$ by
setting
\begin{equation}\label{eq:inner_prod_formula}
\langle b,c\rangle_{L}\coloneq \frac{1}{2}\left(d((b+c)\,x,x)^2
-d(x,b\,x)^2-d(x,c\,x)^2 \right)
\end{equation}
for each $b,c\in L$, and extending linearly to $L\otimes \R$.
Here $x\in M$, the min set of $L$, and Theorem~\ref{thm:flattorus}(2)
tells us that the definition does not depend on which $x$ we choose.
The fact that this is an inner product follows easily from the cosine rule.
The following observation is an immediate consequence of the definitions and Theorem~\ref{thm:flattorus}.

\begin{rem}\label{rem:sp_on_sbgp} Let $L$ be a finitely generated free abelian group acting properly by isometries on a \cat{} space $X$, and let $L' \leqslant L$ be any subgroup. Then the min set of $L$ is contained in the min set of $L'$, and for all $ b,c \in L'$ we have $\langle b,c\rangle_{L'}=\langle b,c\rangle_{L}$.
\end{rem}

We are now ready state our addendum to the Flat Torus Theorem.

\begin{thm}\label{thm:addendum}
Let $L$ be a free abelian group of rank $n$ acting properly by
semi-simple isometries on a \cat{} space $X$.  Then:

\begin{itemize}

\item[(1)]{} The min set $M$ for $L$ is non-empty and $M=Y\times \E^n$.

\item[(2)]{} Every $c\in L$ leaves $M$ invariant, respects the product
decomposition, and acts trivially on $Y$ and by translation on
$\E^n$.

\item[($3'$)]{} For each $y\in Y$, $\{y\}\times \E^n$ is a torsor for
$L\otimes \R$ under the affine extension of the action of $L$.

\item[($4'$)]{} For any isometry $\varphi$ of $X$ that commensurates $L$, the
  image of $\varphi$ in $GL(L\otimes\Q)\leqslant GL(L\otimes\R)$
  preserves the inner product $\langle\cdot,\cdot\rangle_L$.

\end{itemize}
\end{thm}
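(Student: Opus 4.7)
My plan is to obtain (1), (2), and ($3'$) directly from Theorem~\ref{thm:flattorus} combined with Remark~\ref{rem:equiv}, and then to reduce ($4'$) to the fact that an isometry of $X$ preserves translation lengths. Parts (1) and (2) are simply the corresponding parts of Theorem~\ref{thm:flattorus}. For ($3'$), part~(3) of that theorem says that $L$ acts properly discontinuously and cocompactly on each slice $\{y\}\times\E^n$ by translations, so Remark~\ref{rem:equiv} implies that the affine extension of this action makes the slice into a torsor for $L\otimes\R$.

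The key observation for ($4'$) is that $\langle\cdot,\cdot\rangle_L$ computes the squared translation length on $X$. Taking $b=c$ in the defining formula \eqref{eq:inner_prod_formula} and using part~(2) (so that $b$ acts on the slice as a translation, making $d(2b\cdot x,x)=2d(bx,x)$ for $x\in M$) gives $\langle b,b\rangle_L=d(bx,x)^2=|b|^2$, where $|b|$ denotes the translation length of $b$ on $X$.

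With this in hand, ($4'$) should go as follows. Let $\varphi$ be an isometry of $X$ that commensurates $L$, so $\varphi L_1\varphi^{-1}=L_2$ for some finite-index subgroups $L_1,L_2\leqslant L$, and let $\phi\colon L_1\to L_2$ denote the conjugation isomorphism $b\mapsto\varphi b\varphi^{-1}$. By the construction recalled in Section~\ref{subsec:comm}, the image of $\varphi$ in $GL(L\otimes\Q)$ is $\tildephi$, the unique $\Q$-linear extension of $\phi$. Since $\varphi$ is an isometry, for each $b\in L_1$ we have
\[d(\phi(b)\,x,x)=d(\varphi b\varphi^{-1}x,x)=d(b\varphi^{-1}x,\varphi^{-1}x),\]
and taking infima over $x\in X$ yields $|\phi(b)|=|b|$. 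Combining this with the key observation (applied to $L$ and, via Remark~\ref{rem:sp_on_sbgp}, to $L_1$) gives
\[\langle\phi(b),\phi(b)\rangle_L=|\phi(b)|^2=|b|^2=\langle b,b\rangle_L\qquad\text{for all }b\in L_1.\]
The $\R$-linear extension of $\tildephi$ to $L\otimes\R$ therefore preserves the norm on $L_1\otimes\R=L\otimes\R$, and the polarisation identity upgrades this to preservation of the inner product $\langle\cdot,\cdot\rangle_L$.

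The only point that requires care is the bookkeeping around finite-index subgroups: $\tildephi$ is originally defined through the possibly proper subgroup $L_1$, so one needs the inner product on $L_1\otimes\R$ to be the restriction of the one on $L\otimes\R$. This is exactly Remark~\ref{rem:sp_on_sbgp}, which holds because the min set of $L$ is contained in the min set of $L_1$, letting the formula \eqref{eq:inner_prod_formula} be evaluated at the same $x\in M$ for both. Past this check, the argument boils down to the facts that isometries preserve distances and that Euclidean translations are homogeneous, so no further obstacle is anticipated.
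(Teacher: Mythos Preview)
Your proof is correct and follows the same overall strategy as the paper: parts (1)--($3'$) come directly from the Flat Torus Theorem together with Remark~\ref{rem:equiv}, and ($4'$) reduces to the invariance of a metric quantity under conjugation by an isometry. The packaging differs slightly. The paper observes that $\varphi$ carries the min set $M'$ of $L_1$ isometrically onto the min set $M''$ of $L_2$, deduces $\langle\phi(b),\phi(c)\rangle_{L_2}=\langle b,c\rangle_{L_1}$ directly from the defining formula~\eqref{eq:inner_prod_formula}, and then invokes Remark~\ref{rem:sp_on_sbgp} to identify $\langle\cdot,\cdot\rangle_{L_1}$, $\langle\cdot,\cdot\rangle_{L_2}$ and $\langle\cdot,\cdot\rangle_L$. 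You instead pass through the translation length, noting $\langle b,b\rangle_L=|b|^2$, use that conjugation by an isometry preserves $|\cdot|$, and then recover the full inner product by polarisation on $L_1$ and linear extension. Your route is a touch more economical: since you only ever evaluate $\langle\cdot,\cdot\rangle_L$ on elements of $L_1,L_2\subseteq L$ and never introduce $\langle\cdot,\cdot\rangle_{L_1}$ or $\langle\cdot,\cdot\rangle_{L_2}$ separately, the appeal to Remark~\ref{rem:sp_on_sbgp} in your final paragraph is in fact unnecessary.
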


\begin{proof}
Statements (1) and (2) are parts of the usual Flat Torus Theorem
(Theorem~\ref{thm:flattorus}), and are restated here for convenience.
By Remark~\ref{rem:equiv}, ($3'$) is equivalent to
Theorem~\ref{thm:flattorus}(3).  It remains to establish ($4'$).

Since $\varphi$ commensurates $L$, conjugation by it in the group of the isometries of $X$ induces
an isomorphism $\phi:L'\rightarrow L''$, for some finite-index subgroups $L'$ and $L''$ of $L$.  Let $M'$ and
$M''$ be the min sets in $X$ for $L'$ and $L''$ respectively.
Note that $M$, the min set for $L$, is contained in both $M'$
and $M''$, and that $\varphi$ restricts to an isometry from $M'$ to $M''$.
It follows that $\phi:L'\rightarrow L''$ respects their
inner products, in the sense that for each $b,c\in L'$,
$\langle \phi(b),\phi(c)\rangle_{L''}=\langle b,c\rangle_{L'}$.
Since $L'$ and $L''$ have finite index in $L$, there exists
$m \in \N$ so that for all $b\in L$, $mb\in L',L''$.
Remark~\ref{rem:sp_on_sbgp} implies that
$\langle \,\cdot\,,\,\cdot\,\rangle_{L}$,
$\langle \,\cdot\,,\,\cdot\,\rangle_{L'}$ and
$\langle \,\cdot\,,\,\cdot\,\rangle_{L''}$ are all equal on the finite-index subgroup $L' \cap L''$ of $L$.
Hence for any $b,c\in L$,
\[\langle \tildephi(b),\tildephi(c) \rangle_L = \frac{1}{m^2}\langle \phi(mb),\phi(mc) \rangle_{L''}=
\frac{1}{m^2}\langle mb,mc\rangle_{L'}=
\langle b,c\rangle_L,\]
where $\tildephi$ is the image of $\phi$ in $GL(L\otimes \Q)$, as
defined in Subsection~\ref{subsec:comm}.
\end{proof}

\section{Commensurating HNN-extensions of free abelian groups}\label{sec:comm_HNN}

Let $L$ be a finitely generated free abelian group, and let $\phi:L'
\rightarrow L''$ be an isomorphism between finite-index subgroups of $L$.
Define a group $G(L,\phi,L')$ as the HNN-extension of $L$ in which the
stable letter conjugates $L'$ to $L''$ via $\phi$:
\begin{equation*}
G(L,\phi,L')\coloneq \langle L, t\,\| \, tct^{-1}=\phi( c),~\forall\, c\in L'
\rangle.
\end{equation*}
In the case when we are given a basis for $L\cong\Z^n$ and $\phi$
is described by a matrix, we simplify the notation slightly.
For $A\in GL(n,\Q)$ and $L'$ a finite-index subgroup of $L\cap A^{-1}\,L=
\Z^n\cap A^{-1}\,\Z^n$, we write $G(A,L')$ for the HNN-extension defined
as above:
\begin{equation}\label{eq:pres_of_G(A,L')}
G(A,L')\coloneq \langle L, t\,\| \, tct^{-1}= A\,c,~\forall\, c\in L' \rangle.
\end{equation}

If in this case, $L'$ is as large as possible, i.e., $L'=L\cap A^{-1}\,L$,
then we write $G(A)$ instead of $G(A,L')$.

When $n=1$, the groups $G(A,L')$ are precisely the
Baumslag-Solitar groups;
if $A$ is a $1\times 1$ matrix with entry $m/d$, then $G(A,d\Z)=BS(m,d)$
and if $(m,d)=1$ then $G(A,d\Z)=G(A)$.

\begin{prop}\label{prop:free-by-ab-by-cyc}
Each group $G=G(A,L')$ is free-by-abelian-by-cyclic.
\end{prop}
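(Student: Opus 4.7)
The plan is to exhibit an explicit chain $F \lhd N \lhd G$ with $G/N$ infinite cyclic, $N/F$ abelian, and $F$ free, using the affine action of $G$ on $V \coloneq L\otimes\Q$. This action has each $l \in L$ act by translation by $l$ and $t$ act by multiplication by $A$, and it is well-defined on the presentation \eqref{eq:pres_of_G(A,L')}: for $c \in L'$ the composite $tct^{-1}$ acts as $x \mapsto x + Ac$, matching translation by $Ac \in L$.

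From this action I extract two homomorphisms. Let $\epsilon : G \to \Z$ be the homomorphism sending $t \mapsto 1$ and $L \mapsto 0$, and put $N \coloneq \ker \epsilon$, so that $G/N \cong \Z$ and $N$ is the normal closure of $L$ in $G$. The linear-part projection $\mathrm{Aff}(V) \to GL(V)$ composed with the affine action yields $\rho : G \to GL(V)$ with $\rho(L) = 1$ and $\rho(t) = A$, and hence $\rho$ vanishes on $N$. So $N$ acts on $V$ by pure translations, and the translation-vector map $\tau : N \to V$ is a homomorphism into an abelian group. Setting $F \coloneq \ker \tau$, the quotient $N/F$ embeds into $V$ and is therefore abelian.

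The crux is that $F$ is free, which I prove using the Bass-Serre tree $T$ of \eqref{eq:pres_of_G(A,L')}. Suppose $f \in F$ fixes a vertex of $T$: then $f = glg^{-1}$ for some $l \in L$ and $g \in G$, and a short calculation shows that in the affine action $f$ acts as translation by $\rho(g)\,l$ (the translational part of $g$ cancels). Since $\tau(f) = 0$ and $\rho(g)$ is invertible, $l = 0$ and hence $f = 1$. Thus $F$ has trivial vertex and edge stabilizers on $T$. Because $G$ is an HNN extension of the torsion-free group $L$, both it and $F$ are torsion-free, so no non-trivial element of $F$ can invert an edge of $T$ (its square would be a non-trivial edge-fixer). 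Hence $F$ acts freely on $T$ and Bass-Serre theory yields that $F$ is free. I expect no serious obstacle beyond this tree argument, whose only mildly delicate point is upgrading ``trivial vertex stabilizers'' to ``free action'' via torsion-freeness.
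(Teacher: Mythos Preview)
Your proof is correct and follows essentially the same approach as the paper: both use the affine action on $L\otimes\Q$ (the paper uses $L\otimes\R$) to produce a homomorphism whose kernel acts freely on the Bass--Serre tree and is therefore free. The paper packages this slightly more economically by taking the single kernel $\ker\alpha$ of the whole affine representation $\alpha:G\to AGL(n,\R)$, which is automatically normal in $G$, instead of your two-step chain $F\lhd N$; and your final paragraph about edge inversions is unnecessary, since the Bass--Serre tree of an HNN extension carries a $G$-invariant orientation on edges, so the action is without inversions by construction and trivial vertex stabilizers already give a free action.
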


\begin{proof}
There is an affine action of $G$ on $L\otimes \R\cong \R^n$
in which elements of $L$ act as translations and $t$ acts
as multiplication by the matrix $A$.  Let $\alpha:G \to AGL(n,\R)$ denote the resulting homomorphism, where
$AGL(n,\R)$ is the group of affine transformations of $\R^n$.
The subgroup $L$ is in the kernel of the standard map $\beta: AGL(n,\R) \to GL(n,\R)$, and hence the image $(\beta \circ \alpha)(G)$ is cyclic. Since
$\ker\beta \cong \R^n$, we can deduce that $\alpha(G)$ is abelian-by-cyclic.

Evidently the intersection $\ker\alpha \cap L$ is trivial, which implies that $\ker\alpha$ acts freely on the Bass-Serre tree for $G$
expressed as an HNN-extension of $L$. Hence this kernel is free, so $G$ is free-by-abelian-by-cyclic.
\end{proof}

See~\cite{krop} for a stronger result in the case $n=1$.

\begin{thm} \label{thm:iscat0}
The group $G(A,L')=G(\Z^n,A,L')$ is a \cat{} group if and only if the
matrix $A$ is conjugate in $GL(n,\R)$ to an orthogonal matrix.
\end{thm}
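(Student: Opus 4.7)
The proof splits into the two directions, with the hard work in each case being isolated by results already proved earlier in the paper.

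\emph{The forward direction ($G$ \cat{} $\Rightarrow$ $A$ orthogonalizable).} Assuming $G$ acts properly and cocompactly by isometries on a \cat{} space $X$, every element of $G$ acts by a semi-simple isometry, so in particular $L \cong \Z^n$ acts properly by semi-simple isometries on $X$. Applying the Flat Torus Theorem together with Theorem~\ref{thm:addendum}, the min set decomposes as $M=Y\times \E^n$, the action of $L$ on $\{y\}\times \E^n$ is that of a cocompact lattice of translations, and we have the canonical inner product $\langle\,\cdot\,,\,\cdot\,\rangle_L$ on $L\otimes \R$ defined by \eqref{eq:inner_prod_formula}. The stable letter $t$ satisfies $tct^{-1}=Ac$ for all $c\in L'$, and since $L'$ and $AL'$ both have finite index in $L$, we see that $t\in\comm_G(L)$ and that the induced element $\tildephi \in GL(L\otimes\Q)$ is exactly multiplication by $A$. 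By Theorem~\ref{thm:addendum}($4'$), $A$ preserves $\langle\,\cdot\,,\,\cdot\,\rangle_L$ on $L\otimes\R$; choosing coordinates in which this inner product is the standard one exhibits $A$ as conjugate in $GL(n,\R)$ to an orthogonal matrix.

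\emph{The reverse direction ($A$ orthogonalizable $\Rightarrow$ $G$ \cat{}).} Suppose there exists $P\in GL(n,\R)$ such that $P^{-1}AP$ is orthogonal; equivalently, $\R^n$ admits an inner product for which $A$ is an orthogonal transformation. Equip $\E^n \coloneq L\otimes\R$ with the corresponding Euclidean metric. Define an affine isometric action of $G$ on $\E^n$ by letting each $c\in L$ act as the translation $x\mapsto x+c$ and letting $t$ act as the linear isometry $x\mapsto Ax$. The HNN relations are satisfied since for $c\in L'$,
\[ (tct^{-1})\cdot x = t\cdot(A^{-1}x+c) = A A^{-1}x + Ac = x+Ac = (Ac)\cdot x,\]
so this defines a homomorphism $G\to \isom(\E^n)$. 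Let $T$ be the Bass–Serre tree for the HNN-decomposition of $G$, on which $G$ acts with vertex stabilizers the conjugates of $L$ and edge stabilizers the conjugates of $L'$; this action is cocompact with a single edge as fundamental domain.

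Now consider the \cat{} space $X\coloneq \E^n\times T$ with the product metric, on which $G$ acts diagonally. The stabilizer of a point $(x,v)\in X$ lies in $\mathrm{Stab}_G(v)$, which is a conjugate of $L$; such a conjugate acts on $\E^n$ as a full rank lattice of translations (the linear part of conjugation in $\isom(\E^n)$ is an orthogonal transformation and so preserves the lattice structure), and translations act freely. Hence $G$ acts freely, and since orbits are discrete in each factor the action is properly discontinuous. For cocompactness, let $F\subset\E^n$ be a compact fundamental domain for the $L$-action and let $e$ be an edge representing the unique $G$-orbit of edges in $T$; then $F\times e$ is a compact subset of $X$ whose $G$-translates cover $X$. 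Therefore $G$ acts geometrically on the \cat{} space $X$, which completes the proof.

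\emph{Main obstacle.} The substantive content of the forward direction is packaged entirely inside Theorem~\ref{thm:addendum}, which has been established in the previous section; without that addendum one would not directly get the orthogonality of $A$ (only that the normalizer, not the commensurator, preserves the flat structure). For the reverse direction the only delicate point is choosing the right space: one must combine the flat factor $\E^n$ with the Bass–Serre tree, since the affine action of $G$ on $\E^n$ alone is not proper (it need not be free on $t$, for example), but the diagonal action on $\E^n\times T$ automatically is.
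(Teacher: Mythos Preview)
Your proof follows the same approach as the paper's: the forward direction invokes Theorem~\ref{thm:addendum} applied to $t\in\comm_G(L)$, and the reverse direction constructs a geometric action on $\E^n\times T$ where $T$ is the Bass--Serre tree.  Two small corrections are needed in the reverse direction.

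First, the justification ``since orbits are discrete in each factor'' for proper discontinuity is incorrect: the $G$-orbit of a point in $\E^n$ need not be discrete (powers of $t$ act as rotations by $A$, which may have infinite order).  The correct argument, as in the paper, is that vertex and edge stabilizers for the action on $T$ are conjugates of $L$ and $L'$, and these act freely, properly discontinuously and cocompactly on $\E^n$ as lattices of translations; this forces the diagonal action on $\E^n\times T$ to be proper.

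Second, $F\times e$ with $F$ a fundamental domain for $L$ does not suffice for cocompactness.  If $p$ lies in the interior of $e$, the set of $g\in G$ with $g^{-1}p\in e$ is exactly one coset of $\mathrm{Stab}(e)=L'$, so only $L'\cdot F\subsetneq\E^n$ is covered over $\{p\}$ when $L'\subsetneq L$.  Replacing $F$ by a compact fundamental domain for the edge stabilizer $L'$ fixes this.
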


\begin{proof}
Let $L=\Z^n$ and consider any $P\in GL(n,\R)$. Let $\Lambda\leqslant \R^n$ be the
lattice $P\,L$, let $\Lambda'\coloneq P\,L'$, and let $B\coloneq PAP^{-1}$.
There is a group isomorphism from $G(A,L')$ to $H\coloneq G(\Lambda,B\times,\Lambda')$, given by $c\mapsto P\,c$ for $c\in L$, and $t\mapsto t$.

Now suppose that $B$ is an orthogonal matrix.  In this case, there is
a homomorphism from $H\coloneq G(\Lambda,B\times,\Lambda')$ to the group of
isometries of $\E^n$, in which elements of $\Lambda$ act naturally as
translations and $t$ acts as multiplication by $B$.  This action is
not properly discontinuous, but its restriction to each conjugate of
$\Lambda$ is free and properly discontinuous.  Now let $T$ be the
Bass-Serre tree for $H$ expressed as an HNN-extension of $\Lambda$.
The stabilizer of each vertex of $T$ is a conjugate of $\Lambda$ and
the stabilizer of each edge of $T$ is a conjugate of $\Lambda'$.
Consider the diagonal action of $H$ on the product $\E^n\times T$.
Since edge and vertex stabilizers for the action of $H$ on $T$ act
freely properly discontinuously and cocompactly on $\E^n$, it
follows that the diagonal action of $H$ on the product $\E^n\times T$
is free, properly discontinuous, cocompact and isometric (for the
product metric on $\E^n\times T$, which is
\cat{}~\cite[Example~II.1.15(3)]{brihae}).
Hence $H$ is a \cat{} group.

For the converse, if $G=G(A,L')$ is a \cat{} group, then since $t$
is in $\comm_G(L)$, it follows that the action of $t$ preserves an
inner product on $L\otimes \R=\R^n$ by~Theorem~\ref{thm:addendum}.  But
this action is just multiplication by $A$.  Hence $A$ preserves an
inner product on $\R^n$ and so (since all $n$-dimensional
real inner product spaces are isomorphic), $A$ is conjugate in
$GL(n,\R)$ to an orthogonal matrix.
\end{proof}

\begin{rem}
There is another way to describe the \cat{} space constructed in the
above proof.  Suppose that $B$ is an orthogonal matrix, $\Lambda\leqslant \R^n$ is a
lattice and multiplication by $B$ induces an isomorphism of
finite-index sublattices $B\times:\Lambda'\rightarrow \Lambda''$. Then
multiplication by $B$ induces an isometry of tori from
$\T'\coloneq\R^n/\Lambda'$ to $\T''\coloneq\R^n/\Lambda''$.  Take the torus
$\T\coloneq\R^n/\Lambda$, and the direct product $\T'\times [0,1]\coloneq
\R^n/\Lambda' \times [0,1]$.  Glue the subspace $\T'\times\{0\}\cong
\T'$ to $\T$ via the covering map $\T'\rightarrow \T$, which is a local
isometry, and glue the subspace $\T'\times\{1\}\cong \T'$ to $\T$ by the
composite of multiplication by $B$ (an isometry $\T'\rightarrow \T''$)
and the covering map $\T''\rightarrow \T$.  By the gluing
lemma~\cite[II.11.13]{brihae}, the resulting space is locally \cat{}.
The universal cover of this space with its group of deck transformations
is of course the direct product $\E^n\times T$ with the isometric action
of $G(\Lambda,B\times,\Lambda')$ as described in the proof above.
\end{rem}

\begin{cor} \label{cor:quasi-isom}
If $A \in GL(n,\Q)$ is conjugate in $GL(n,\R)$ to an orthogonal matrix and $L' \neq L$  then $G(A,L')$ is quasi-isometric
to $\Z^n\times F$, the direct product of a free abelian group and a finite
rank non-abelian free group $F$.
\end{cor}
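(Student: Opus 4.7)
The plan is to show that $G(A,L')$ and $\Z^n \times F$ act geometrically on quasi-isometric proper \cat{} spaces, and then conclude via the \v{S}varc--Milnor lemma that the two groups themselves are quasi-isometric.

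First, I would observe that since $A$ is conjugate in $GL(n,\R)$ to an orthogonal matrix, $|\det A|=1$, so the two associated subgroups of the HNN-extension $G(A,L')$, namely $L'$ and $AL'$, have the same (positive integer) index $k$ in $L$. Because $L' \neq L$, we have $k \geq 2$, and hence the Bass-Serre tree $T$ of $G(A,L')$ viewed as an HNN-extension of $L$ is a regular locally finite tree of valency $[L:L'] + [L:AL'] = 2k \geq 4$.

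Next, I would recall from the proof of Theorem~\ref{thm:iscat0} that $G(A,L')$ acts freely, properly discontinuously, cocompactly and by isometries on the proper \cat{} product $X \coloneq \E^n \times T$. Analogously, $\Z^n \times F$ acts geometrically on the proper \cat{} product $Y \coloneq \E^n \times T_F$, where $T_F$ is the Cayley tree of $F$ with respect to a free basis; $T_F$ is regular of valency $2\,\mathrm{rk}(F) \geq 4$. Any two regular locally finite trees with valencies at least $3$ are quasi-isometric, so $T$ and $T_F$ are quasi-isometric; applying this quasi-isometry on the tree factor and the identity on $\E^n$, the spaces $X$ and $Y$ become quasi-isometric. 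The \v{S}varc--Milnor lemma then shows that both $G(A,L')$ and $\Z^n \times F$ are quasi-isometric to this common metric space, hence to each other.

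The only step that requires attention is the index equality $[L:L'] = [L:AL']$, which is what makes the Bass-Serre tree regular and therefore compatible (up to quasi-isometry) with the Cayley tree of any non-abelian free group; this follows directly from the fact that the matrix $A$, being orthogonally conjugate, has determinant of absolute value one. Everything else is a routine combination of the geometric action produced in Theorem~\ref{thm:iscat0}, the analogous product action for $\Z^n \times F$, and the \v{S}varc--Milnor lemma.
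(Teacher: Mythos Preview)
Your argument is correct and follows essentially the same route as the paper: determinant $\pm 1$ forces $[L:L']=[L:AL']$, the Bass--Serre tree is regular of valency $2k$, and one applies the \v{S}varc--Milnor lemma to geometric actions on $\E^n\times(\text{tree})$.

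The only difference is cosmetic. The paper fixes $F$ to have rank $m=|L:L'|$, so that the Cayley tree of $F$ is \emph{equal} to the Bass--Serre tree $T$ and both $G(A,L')$ and $\Z^n\times F$ act geometrically on the very same space $\E^n\times T$; the conclusion then follows immediately. You instead allow $F$ to have arbitrary rank $\ge 2$ and invoke the (well-known) fact that any two regular trees of valency $\ge 3$ are quasi-isometric. Your version proves a nominally stronger statement (any non-abelian $F$ works), while the paper's version is one step shorter; since all finite-rank non-abelian free groups are mutually quasi-isometric, the two are equivalent anyway.
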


\begin{proof} Let $F$ be the free group of rank $m=|L:L'|>1$.
Since the determinant of $A$ is $\pm 1$, $m=|L:L''|$ too.
Hence the Bass-Serre tree $T$, for the decomposition of $G(A,L')$ as an HNN-extension, is a regular tree of valency
$2m$, and so both $G$ and $\Z^n\times F$ have natural isometric geometric actions
on $\E^n\times T$.  By the \v{S}varc-Milnor lemma \cite[I.8.19]{brihae}
they are quasi-isometric to each other.
\end{proof}

\begin{thm}\label{thm:irredlat}
Suppose that $G=G(A,L')$ where $A$ has infinite order and is conjugate
in $GL(n,\R)$ to an orthogonal matrix.  Then $G$ is a lattice in
$\isom(\E^n)\times\isom(T)$ whose projections to the factors are
not discrete.
\end{thm}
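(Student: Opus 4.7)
The overall plan is to exhibit $G$ as a uniform (discrete, cocompact) lattice in $\isom(\E^n)\times\isom(T)$ by re-using the geometric action on $X\coloneq \E^n\times T$ from the proof of Theorem~\ref{thm:iscat0}, and then to verify that neither of the two coordinate projections is discrete. By first conjugating inside $GL(n,\R)$ (as in the proof of Theorem~\ref{thm:iscat0}) one may assume that $A$ itself is orthogonal with respect to the standard inner product on $\R^n$; this only changes the lattice $L$ up to an $\R$-linear isomorphism and produces an isomorphic group. Then $G$ acts on $X$ properly discontinuously, isometrically, and cocompactly, with $L$ acting on $\E^n$ by translations, $t$ by the rotation $A$, and $G$ on the Bass--Serre tree $T$ in the standard way. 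This gives a homomorphism $\rho\colon G\to\isom(\E^n)\times\isom(T)$.

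The next step is to verify that $\rho$ is injective and that $\rho(G)$ is a uniform lattice. For injectivity, Proposition~\ref{prop:free-by-ab-by-cyc} tells us that the kernel of the affine action $\alpha\colon G\to AGL(n,\R)$ meets $L$ trivially and acts freely on $T$; any element of $\ker\rho$ lies in $\ker\alpha$ and fixes every vertex of $T$, and is therefore trivial. For the lattice property, one invokes the standard fact that in a locally compact group acting properly on a proper metric space with compact point stabilizers (which is the case for $\isom(\E^n)\times\isom(T)$ acting on $X$), a subgroup is a uniform lattice iff it acts properly discontinuously and cocompactly, which $\rho(G)$ does.

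Showing non-discreteness of $\pi_{\E^n}\circ\rho$ is immediate: the stable letter $t$ maps to the rotation $A\in O(n)\leqslant\isom(\E^n)$ fixing the origin, and since $A$ has infinite order by hypothesis while $O(n)$ is compact, the cyclic subgroup $\langle A\rangle$ has an accumulation point in $O(n)$. Hence $\pi_{\E^n}(\rho(G))$ is not discrete in $\isom(\E^n)$.

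The main difficulty is the non-discreteness of $\pi_T\circ\rho$. The image $\pi_T(\rho(L))$ lies in the stabilizer of the vertex $v_0\in T$ fixed by $L$, which is compact since $T$ is locally finite (of valency $2|L:L'|$). If $\pi_T(\rho(G))$ were discrete then $\pi_T(\rho(L))$ would be discrete in a compact group, hence finite; equivalently, the kernel $K=L\cap \bigcap_{g\in G}gLg^{-1}$, the maximal $G$-normal subgroup contained in $L$, would have finite index in $L$. Normality in $G$ would force $A\,K=K$, and choosing a $\Z$-basis of $K$ would realise $A$ as an element of $GL(n,\Z)$. However, by the addendum to the Flat Torus Theorem (Theorem~\ref{thm:addendum}, claim~($4'$)), the element $t\in\comm_G(L)$ preserves the inner product $\langle\cdot,\cdot\rangle_L$ on $L\otimes\R$, so $A$ lies in a compact orthogonal group; the intersection of $GL(n,\Z)$ with any compact subgroup of $GL(n,\R)$ is finite, and hence $A$ would have finite order, contradicting the hypothesis. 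This last argument, combining Theorem~\ref{thm:addendum}($4'$) with the $G$-normal subgroup trick to force $A$ into a finite group, is the crux; the earlier steps reduce to standard facts about geometric actions on proper \cat{} spaces.
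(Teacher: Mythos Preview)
Your proof is correct, and the first three steps---exhibiting the action on $\E^n\times T$, verifying the lattice property, and showing non-discreteness of the projection to $\isom(\E^n)$ via the infinite-order rotation $A$ in the compact group $O(n)$---are essentially identical to the paper's argument.

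The genuine difference lies in the treatment of the projection to $\isom(T)$. Both arguments start the same way: if this projection were discrete then the core $c(L)=\bigcap_{g\in G}gLg^{-1}$ (your $K$) would have finite index in $L$, and since $K\subseteq t^{-1}Lt\cap L=L'$, normality gives $A\,K=K$. From here the paper invokes part~(5) of the Flat Torus Theorem: since $c(L)$ is a normal abelian subgroup of the \cat{} group $G$, some power $t^k$ centralizes it, forcing $A^k$ to act trivially on $c(L)\otimes\R$; but $A^k\neq I$, so $c(L)\otimes\R$ is a proper subspace of $L\otimes\R$, contradicting finite index. You instead observe that $A|_K$ is represented by a matrix in $GL(n,\Z)$ in any $\Z$-basis of $K$, while $A$ (equivalently, its conjugate) lies in a compact subgroup of $GL(n,\R)$; since $GL(n,\Z)$ is discrete, the intersection is finite, forcing $A$ to have finite order. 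Your route is arguably more elementary: it avoids the Flat Torus Theorem entirely and uses only the hypothesis that $A$ is conjugate to an orthogonal matrix together with the discreteness of $GL(n,\Z)$. Note, incidentally, that your appeal to Theorem~\ref{thm:addendum}($4'$) is redundant---you already arranged $A\in O(n)$ in the first paragraph, so the inner product preserved by $A$ is simply the standard one.
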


\begin{proof}
Since $A$ is conjugate to a matrix in $O(n)$, $G$ acts isometrically
on $\E^n$, and the action on the Bass-Serre tree $T$ is always isometric.
Since $G$ acts freely, properly discontinuously, cocompactly and
isometrically on
$\E^n\times T$, it follows that $G$ is a lattice in $\isom(\E^n)\times\isom(T)$.  Since $A$ has infinite
order, the element $t\in G$ acts on $\E^n$ as an infinite order element of
the point stabilizer, which is compact (and isomorphic to $O(n)$), showing
that the projection of $G$ to $\isom(\E^n)$ is not discrete.

The vertex stabilizers of the action of $G$ on $T$ are conjugates of the
subgroup $L\leqslant G$, hence the kernel
of this action is the core $c(L)$ of $L$, i.e., the intersection of
all conjugates of $L$ in $G$.  Provided that $c(L)$ has infinite index
in $L$, it will follow that the image of $L$ in $\isom(T)$ is an
infinite subgroup of the vertex stabilizer, which is a compact (profinite)
group (because the tree $T$ is locally finite).
But $c(L)$ is a normal abelian subgroup of $G$, and so,
by the Flat Torus Theorem (Theorem \ref{thm:flattorus}),
there is $k \in \N$ such that $t^k$ centralizes $c(L)$, and
thus $t^k$ acts trivially on $c(L)\otimes \R$.  But $t^k$ cannot
act trivially on $L\otimes \R$ because $A$ has infinite order, and
so $c(L)\otimes \R$ must be a proper subspace of $L\otimes \R$. It follows that
$|L:c(L)|=\infty$, and so the image of $L$ (and, hence, of $G$)
in $\isom(T)$ is not discrete.
\end{proof}

In the case when $n=2$, it follows that $G$ as in the above statement
is an irreducible lattice in $\isom(\E^2)\times\isom(T)$, because
the matrix $A$ acts irreducibly on $\E^2$.  This is
not necessarily the case for larger $n$.  For example, if $G$ satisfies
the hypotheses, then so does
$\Z\times G\leqslant \isom(\E^1)\times\isom(\E^n)\times \isom(T)
\leqslant\isom(\E^{n+1})\times \isom(T)$.  As mentioned in the introduction,
the existence of irreducible lattices in $\isom(\E^2)\times\isom(T)$
contradicts~\cite[Theorem~1.3(i), Proposition~3.6, Theorem~3.8]{capmon}.

\section{Characterizing biautomaticity of the groups \texorpdfstring{$G(A,L')$}{ G(A,L')}}\label{sec:biaut}
Suppose that  $L=\Z^n$, for some $n \in \N$, $A \in GL(n,\Q)$ and $L' \leqslant L$ is a finite-index subgroup such that $L'' \coloneq A\,L'$ is contained in $L$. In this section we study the (virtual) biautomaticity of the groups $G(A,L')$ defined in \eqref{eq:pres_of_G(A,L')}.

\begin{lemma} \label{lem:selfcent}
If $L'$ and $L''$ are both proper subgroups of $L$ then
$L$ is self-centralizing in $G(A,L')$.  In particular, if
neither $A$ nor $A^{-1}$ is an integer matrix, then $L$ is
self-centralizing.
\end{lemma}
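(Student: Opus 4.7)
The plan is to pass to the Bass-Serre tree $T$ associated with the HNN-decomposition $G=G(A,L')$ over $L$, on which $G$ acts without inversion, with vertex stabilizers conjugate to $L$ and edge stabilizers conjugate to $L'$. Let $v_0$ be the base vertex stabilized by $L$. I will show that $\{v_0\}$ is the entire fixed-point set of $L$ on $T$, and then deduce that any $g\in C_G(L)$ must fix $v_0$ and hence lie in $L$.

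The first step is to identify the $L$-stabilizers of the edges of $T$ incident to $v_0$. These edges fall into two $L$-orbits: one containing $e_+$, joining $v_0$ to $tv_0$, whose $L$-stabilizer is $L\cap tLt^{-1}=L''$ (via the relation $tL't^{-1}=L''$), and one containing $e_-$, joining $v_0$ to $t^{-1}v_0$, whose $L$-stabilizer is $L\cap t^{-1}Lt=L'$. Because $L$ is abelian, translating an edge by an element of $L$ does not alter its $L$-stabilizer, so every edge at $v_0$ has $L$-stabilizer equal to either $L'$ or $L''$; by hypothesis both are proper subgroups of $L$. Hence no edge at $v_0$ is fixed by $L$, and the fixed-point set of $L$ in $T$ reduces to the single vertex $v_0$.

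Second, for $g\in C_G(L)$ and any $l\in L$ one has $l(gv_0)=g(lv_0)=gv_0$, so $gv_0$ is fixed by $L$; by the previous step $gv_0=v_0$, whence $g\in \mathrm{Stab}_G(v_0)=L$. Combined with the trivial inclusion $L\leqslant C_G(L)$, this gives $C_G(L)=L$, the main assertion.

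For the ``in particular'' clause, note that $A$ is an integer matrix precisely when $L\subseteq A^{-1}L$, i.e., when $L\cap A^{-1}L=L$; analogously $A^{-1}$ is integral precisely when $L\cap AL=L$. Under the assumption that neither $A$ nor $A^{-1}$ is integral, both $L'\leqslant L\cap A^{-1}L$ and $L''=AL'\leqslant L\cap AL$ are proper in $L$, so the first part of the lemma applies. I do not anticipate any real obstacle: the whole argument is a short application of Bass-Serre theory, and the abelianness of $L$ is what makes the computation of $L$-stabilizers of adjacent edges collapse cleanly onto the distinguished associated subgroups.
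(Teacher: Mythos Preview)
Your proof is correct and follows essentially the same route as the paper: pass to the Bass--Serre tree, observe that the edges at the base vertex $v_0$ have stabilizers $L'$ or $L''$ (both proper), deduce that $\mathrm{Fix}_T(L)=\{v_0\}$, and conclude $C_G(L)\subseteq\mathrm{Stab}(v_0)=L$. The paper is terser---it just says the hypotheses imply no edge at $v_0$ is fixed and that $T^L$ is a subtree---whereas you spell out the two edge orbits and invoke abelianness of $L$ explicitly; but the argument is the same.
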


\begin{proof}
Let $T$ be the Bass-Serre tree for $G=G(A,L')$ expressed as an
HNN-extension of $L$.  The centralizer of $L$ will act on the set of
$L$-fixed points $T^L$, which is a subtree (because the unique
geodesic path between two fixed points must also be fixed).
The vertex corresponding to the identity coset of $L$ is
fixed by $L$, but the hypotheses imply that no edge that is
incident with this vertex can be fixed by $L$.  Hence
the fixed point set for the action of $L$ on $T$ is this
single vertex.  The first claim
follows since $L$ is the full stabilizer of this vertex.

The second claim follows from the first, because the largest
possible choices for $L'$ and $L''$ are $L'= A^{-1}\,L\cap L$
and $L''= A\,L\cap L$.
\end{proof}

Recall that a subgroup $G$ of a direct product $H \times F$, of two groups $H$ and $F$, is said to be a \emph{subdirect product} if the restrictions to $G$ of the natural projections  $H \times F \to H$ and $H \times F \to F$ are surjective.

The following criterion for biautomaticity will be useful.
\begin{prop}\label{prop:crit_biaut} Let $H$ be a finitely generated virtually abelian group and let $F$ be a biautomatic group. Then every subdirect product $G \leqslant H \times F$ is also biautomatic.
\end{prop}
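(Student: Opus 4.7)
The plan is to reduce, by a sequence of finite-index passages, to the situation where $G$ decomposes as an internal direct product of a finitely generated free abelian group and a biautomatic group. Since direct products of biautomatic groups are biautomatic, and biautomaticity is preserved under passage between a group and its finite-index sub- and over-groups, this will establish the claim.

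The starting point is Goursat's lemma applied to the subdirect product $G \leqslant H \times F$: setting $N \coloneq G \cap (H \times 1) \leqslant H$ and $M \coloneq G \cap (1 \times F) \leqslant F$, one obtains an isomorphism $H/N \cong F/M$; call this common quotient $Q$, with natural surjections $\pi_Q : H \to Q$ and $\phi_F : F \to Q$, so that $G = \{(h,f) \in H \times F : \pi_Q(h) = \phi_F(f)\}$. Two finite-index reductions would then put this into convenient shape. First, since $H$ is finitely generated virtually abelian, replace $H$ by a torsion-free abelian subgroup $A$ of finite index and $G$ by $G \cap (A \times F)$; after relabelling, the new $H$ is finitely generated free abelian and the new $F$ is a finite-index subgroup of the original, hence still biautomatic. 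Second, choose a torsion-free subgroup $Q_0 \leqslant Q$ of finite index (possible since $Q$ is finitely generated abelian) and pass from $G$ to $\pi_F^{-1}(\phi_F^{-1}(Q_0)) \cap G$; a direct check shows that the new Goursat quotient becomes $Q_0$, so we may assume $Q$ is torsion-free, while $H$ (being a subgroup of the previous free abelian $H$) is still free abelian and $F$ is still biautomatic.

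With $H$ finitely generated free abelian and $Q$ torsion-free, the central task is to construct a homomorphism section $\tau : F \to G$ of $\pi_F : G \to F$ of the form $\tau(f) = (s(f), f)$, where $s : F \to H$ is a homomorphism with $\pi_Q \circ s = \phi_F$. Since $Q$ is abelian and torsion-free, $\phi_F$ factors through $F^{ab}/\mathrm{Tors}(F^{ab})$, which is a finitely generated free abelian group; the resulting homomorphism into $Q$ then lifts through the surjection $\pi_Q : H \to Q$ by projectivity of free abelian groups, producing the desired $s$. Since $N$ is central in $G$ (it lies in the abelian factor of $H \times F$), the section $\tau$ yields the internal direct product decomposition $G = N \times \tau(F) \cong N \times F$, and biautomaticity follows from the direct product theorem. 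The main obstacle I anticipate is purely bookkeeping, namely verifying that each finite-index reduction preserves the subdirect product structure and transforms the Goursat data in the predicted way, while the core algebraic step, the splitting via projectivity, is immediate once the reductions are performed.
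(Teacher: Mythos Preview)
Your argument has a genuine gap in its opening premise: you claim that ``biautomaticity is preserved under passage between a group and its finite-index sub- and over-groups,'' but the overgroup direction is a well-known open problem (\cite[Open Question~4.1.5]{Eps}, flagged explicitly later in this very paper). Passing from a biautomatic group to a finite-index subgroup is fine (\cite[Theorem~4.1.4]{Eps}); passing back up is not known. Since both of your reductions replace $G$ by a proper finite-index subgroup before you produce the direct-product splitting, the splitting only establishes biautomaticity of that subgroup, not of $G$ itself. The algebra you carry out---Goursat's lemma, the two reductions, and the lift of $\phi_F$ through $\pi_Q$ via projectivity of free abelian groups---is all correct, but it proves only that $G$ is \emph{virtually} biautomatic.

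The paper avoids this trap by moving in the opposite direction: instead of shrinking $G$, it shrinks the ambient product. With $N = G \cap H$, one finds (via \cite[Lemma~4.2]{Min-VR}) a normal subgroup $R \lhd H$ with $R \cap N = \{1\}$ and $|H : NR| < \infty$, so that the quotient map $H \times F \to (H/R) \times F$ is injective on $G$ and carries $G$ onto a \emph{finite-index} subgroup of $(H/R) \times F$. Since $H/R$ is still finitely generated virtually abelian, hence biautomatic, the product $(H/R) \times F$ is biautomatic, and $G$ inherits biautomaticity as a finite-index subgroup---the direction that \emph{is} known. If you want to rescue your approach, you would need to replace the finite-index descent by an embedding of the original $G$ into a biautomatic overgroup, which is essentially what the paper does.
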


\begin{proof} Abusing the notation we identify any subgroup $S \leqslant H$ with the subgroup $S \times \{1\}$ of $H \times F$.
Since $G \leqslant H \times F$ is subdirect, the subgroup $N\coloneq G \cap H$ is normal in $H$ (cf. \cite[Lemma~2.1]{Min-fibre}). Now, by \cite[Lemma~4.2]{Min-VR}, there exists a normal subgroup
$R \lhd H$ which intersects $N$ trivially and such that $|H:NR|<\infty$. Let $H_1\coloneq H/R$ and $\phi: H \times F \to H_1 \times F$ be the natural homomorphism whose kernel is $R$.

By construction, $R$ has trivial intersection with $G$ in $H\times F$, hence $\phi(G) \cong G$. Evidently $\phi(G)$ is still subdirect in $H_1 \times F$.
Moreover, $\phi(N) \subseteq \phi(G) \cap H_1$ has finite index in $H_1$ as $|H:NR|<\infty$, which implies that $\phi(G)$ has finite index in $H_1\times F$ (see  \cite[Lemma~2.1]{Min-fibre}).

Now, $H_1$ is finitely generated and virtually abelian, so it is biautomatic by \cite[Corollary~4.2.4]{Eps}. Therefore $H_1 \times F$ is biautomatic by \cite[Theorem~4.1.1]{Eps}, and, hence its finite-index subgroup $\phi(G) \cong G$ is biautomatic by \cite[Theorem~4.1.4]{Eps}.
\end{proof}

\begin{thm} \label{thm:notbiaut}
The group $G=G(A,L')$ is biautomatic if and only if $A$ has
finite order.
\end{thm}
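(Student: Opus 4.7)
The proof separates into the two implications and uses the two main results of the paper so far. For the ``if'' direction, assume $A$ has finite order $k$. The affine action $\alpha\colon G\to AGL(n,\R)$ from Proposition~\ref{prop:free-by-ab-by-cyc} has image $\alpha(G)=L\rtimes\langle A\rangle$, a crystallographic and in particular virtually abelian group. Next consider the action $\beta\colon G\to\aut(T)$ on the Bass-Serre tree of the HNN decomposition; its kernel is the core $c(L)=\bigcap_{g\in G}gLg^{-1}$. This core contains the $A$-invariant sublattice $M\coloneq\bigcap_{i=0}^{k-1}A^{-i}L'$, which is normal in $G$ because it is contained in $L'$ and closed under multiplication by $A$, and $M$ has finite index in $L$ (as an intersection of finitely many rank-$n$ sublattices of $L\otimes\Q$). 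Hence $\beta(G)=G/c(L)$ acts on $T$ with finite vertex stabilizers, so it is virtually free and therefore biautomatic. The product map $\alpha\times\beta\colon G\to\alpha(G)\times\beta(G)$ is injective, since $\ker\alpha\cap L=\{0\}$ (the restriction of $\alpha$ to $L$ is the translation embedding) and $\ker\beta=c(L)\leqslant L$; its image is clearly subdirect. Proposition~\ref{prop:crit_biaut} now yields that $G$ is biautomatic.

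For the ``only if'' direction, assume $G$ is biautomatic. The stable letter $t$ commensurates $L$ via $tL't^{-1}=L''$, and the coordinate-free description at the end of Section~\ref{subsec:comm} identifies its image in $\comm(L)\cong GL(n,\Q)$ with the matrix $A$ itself. The plan is to apply Theorem~\ref{thm:comm_of_ab_qc_sbgp} to $L$, which will force the image of $\comm_G(L)$ in $\comm(L)$ to be finite and therefore $A$ to have finite order. For this, $L$ must be $\cL$-quasiconvex in some biautomatic structure on $G$; since centralizers of finite subsets are always $\cL$-quasiconvex by~\cite[Corollary~8.3.5 and Theorem~8.3.1]{Eps}, it suffices to verify that $L$ is self-centralizing. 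When both $L'$ and $L''$ are proper subgroups of $L$ this is precisely Lemma~\ref{lem:selfcent}. When $L=L'=L''$, so that $G=L\rtimes_A\Z$ with $A\in GL(n,\Z)$, the semidirect-product structure shows that $(v,t^m)\in Z_G(L)$ iff $A^m w=w$ for every $w\in L$, i.e.~iff $A^m=I$, so $Z_G(L)=L$ whenever $A$ has infinite order.

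The remaining (strictly ascending) case is when exactly one of $L',L''$ equals $L$, which forces $A$ or $A^{-1}$ to be an integer matrix with $|{\det}|\geqslant 2$. Here $L$ can genuinely fail to be self-centralizing (as in $BS(1,2)$, where the centralizer of $\langle a\rangle$ is isomorphic to $\Z[1/2]$), so the plan is to derive a contradiction from exponential distortion of $L$ in $G$, invoking the well-known fact that finitely generated abelian subgroups of biautomatic groups are undistorted (cf.~\cite{G-S}). Concretely, in the case $L'=L$ the HNN relation iterates to $t^mct^{-m}=A^mc$ for every $c\in L$ and $m\geqslant 0$, so $|A^mc|_G\leqslant 2m+|c|_G$, while the hypothesis $|\det A|\geqslant 2$ gives an eigenvalue $\lambda$ of $A$ with $|\lambda|>1$, and choosing $c\in L$ whose component in the corresponding generalized eigenspace is non-zero makes $|A^mc|_L$ grow like $|\lambda|^m$. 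The case $L''=L$ is handled symmetrically by swapping $t$ and $t^{-1}$. The main obstacle of the plan is this strictly ascending case: without a self-centralizing abelian subgroup Theorem~\ref{thm:comm_of_ab_qc_sbgp} cannot be brought to bear directly on $L$, which is what forces the separate undistortion argument.
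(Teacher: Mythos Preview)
Your proof is correct. The ``if'' direction is essentially the paper's argument: the paper takes $\beta$ to be the quotient map $G\to G/M$ rather than the action on $T$, but since $M$ is normal and contained in $L$ it lies in the core $c(L)$, so your $\beta(G)=G/c(L)$ is a quotient of the paper's $G/M$, and either way one obtains a virtually free (hence biautomatic) second factor and then invokes Proposition~\ref{prop:crit_biaut}.

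The ``only if'' direction diverges in the ascending case. The paper treats every case with $L'=L$ or $L''=L$ at once by citing the Groves--Hermiller theorem~\cite{grhe}: a biautomatic ascending HNN-extension of a finitely generated abelian group is virtually abelian, so $A$ has finite order. You instead split this into two subcases. When $L'=L''=L$ you observe that $L$ is still self-centralizing provided $A$ has infinite order, so Theorem~\ref{thm:comm_of_ab_qc_sbgp} applies and yields a contradiction; this is a neat observation and avoids an external citation. For the strictly ascending case you argue via exponential distortion of $L$. That argument is valid, but the undistortion fact you cite is not stated verbatim in~\cite{G-S}; it needs a short derivation along the lines of: $L\leqslant Z(C_G(L))$, the centre of the quasiconvex (hence biautomatic) centralizer is itself quasiconvex and therefore a finitely generated abelian group undistorted in $G$, and any subgroup of a finitely generated abelian group is undistorted in it by linear algebra. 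So both routes are short; the paper's is a single black-box citation to Groves--Hermiller, while yours stays within the biautomatic toolkit already assembled in the paper at the cost of spelling out the undistortion chain.
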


\begin{proof} Assume that $G$ is biautomatic.
If either $L'=L$ or $L''=L$, then $G$ is an ascending HNN-extension
of $L$; in this case Groves and Hermiller \cite[Main~Theorem]{grhe} proved that $G$ must be virtually abelian. The latter clearly implies that $A$ has finite order in $GL(n,\Q)$.

Thus we can assume $L'$ and $L''$ are proper subgroups of $L$.
Then $L$ is finitely generated and self-centralizing by Lemma~\ref{lem:selfcent}, and the
commensurator $\comm_G(L)$ is the whole of $G$. Therefore, by Corollary~\ref{cor:comm_of_centralizer}, there is $k \in \N$ such that
$t^k$ centralizes a finite-index subgroup of $L$. This means that $A^k$ is the identity matrix, and so $A$ has order
dividing $k$.

Now suppose that $A$ has finite order $k \in \N$, and let $M$
be the intersection of the subgroups $t^i L' t^{-i}=A^i\,L'\leqslant L$, $i=0,\dots,k-1$.  Then
$M$ is a finite-index subgroup of $L$ and is normal
in $G$.  The quotient group $F=G/M$ is an
HNN-extension of the  finite group $L/M$, hence it is virtually free. It follows that $F$ is word hyperbolic, and, therefore, biautomatic (this can be easily deduced from \cite[Chapter 3]{Eps}; see \cite[III.$\Gamma$.2.20]{brihae} for an explicit statement).
Let $\beta: G \to F$ denote the natural epimorphism with $\ker\beta=M$.

As in the proof of Proposition~\ref{prop:free-by-ab-by-cyc}, we also have a homomorphism $\alpha: G \to AGL(n,\R)$ which sends $L$ to a subgroup of translations of $\R^n$ and $t$ to the linear transformation of $\R^n$ corresponding to $A$. Since $A$ has finite order, it is clear that $H \coloneq \alpha(G)$ is virtually abelian; moreover, $L \cap \ker\alpha=\{1\}$ by construction.

Define the homomorphism $\psi: G \to H \times F$ by $\psi(g)=(\alpha(g),\beta(g))$ for all $g \in G$. This homomorphism is injective because the kernels of $\alpha$ and $\beta$ intersect trivially. Since $\alpha(G)=H$ and $\beta(G)=F$,
$\psi(G)$ is a subdirect product in $H \times F$. Therefore $G \cong \psi(G)$ is biautomatic by Proposition~\ref{prop:crit_biaut}.
\end{proof}

A well-known open problem
(see \cite[Open Question~4.1.5]{Eps}) asks whether a group which has a finite-index biautomatic subgroup must itself be biautomatic. In the remainder of this section we will show that this is indeed the case for our groups:  $G(A,L')$ is biautomatic if and only if it is virtually biautomatic.

\begin{lemma} \label{lem:morecents-new} Suppose that $L'$ and $L''$ are both proper subgroups of $L$ and let $G^{(2)}$ denote the
second derived subgroup of $G=G(A,L')$. Then $G^{(2)}$ is a non-abelian free group and for any two non-commuting elements $g_1,g_2 \in G^{(2)}$, the centralizer $\mathrm{C}_G(\{g_1,g_2\})$ is a finite-index subgroup of $fLf^{-1}$, for some $f \in G$.
\end{lemma}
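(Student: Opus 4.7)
My plan uses two ingredients from the earlier part of the paper: the affine action $\alpha\colon G\to AGL(n,\mathbb{R})$ constructed in the proof of Proposition~\ref{prop:free-by-ab-by-cyc} (which sends $L$ injectively to translations and $t$ to multiplication by $A$, with $\alpha(G)$ abelian-by-cyclic and $L\cap\ker\alpha=\{1\}$), together with the Bass-Serre tree $T$ of the HNN-decomposition of $G$. As observed in that proof, $\ker\alpha$ acts freely on $T$ and is therefore a free group.

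For the first assertion, since $\alpha(G)$ is metabelian we have $G^{(2)}\leqslant\ker\alpha$, so $G^{(2)}$ is free. The hypothesis that both $L'$ and $L''$ are proper in $L$ makes the HNN-extension neither ascending nor descending, so $G$ acts on $T$ with no global fixed vertex or end. A ping-pong argument---applied e.g.\ to $t$ and $ltl^{-1}$ for some $l\in L\setminus L''$, whose axes in $T$ are distinct by Britton's lemma---produces a non-abelian free subgroup of $G$. Hence $G$ is not solvable, $G^{(3)}\ne\{1\}$, and the free group $G^{(2)}$ must be non-abelian.

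Now take non-commuting $g_1,g_2\in G^{(2)}$. Since $\ker\alpha$ acts freely on $T$, each $g_i$ is hyperbolic with a well-defined axis $\gamma_i$. If $\gamma_1=\gamma_2$ then $\langle g_1,g_2\rangle$ would embed faithfully into the translation group of $\gamma_1\cong\mathbb{R}$ and so be abelian, a contradiction; hence $\gamma_1\ne\gamma_2$. Any $h\in \mathrm{C}_G(\{g_1,g_2\})$ preserves both axes. A hyperbolic such $h$ would have its own axis equal to each $\gamma_i$, so $h$ is elliptic; and an elliptic isometry preserving $\gamma_i$ must either fix it pointwise or restrict to a reflection, and reflections do not commute with non-trivial translations. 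Thus $h$ fixes each $\gamma_i$ pointwise. Choosing a vertex $v_0\in\gamma_1$ once and for all and writing $\mathrm{Stab}_G(v_0)=fLf^{-1}$ for a suitable $f\in G$, we obtain $\mathrm{C}_G(\{g_1,g_2\})\leqslant fLf^{-1}$.

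To finish, observe that $\alpha$ restricts to an isomorphism from $fLf^{-1}$ onto the translation subgroup $BL\subset\mathbb{R}^n$, where $B$ is the linear part of $\alpha(f)$; in particular $fLf^{-1}\cap\ker\alpha=\{1\}$. For $h\in fLf^{-1}$ and $g\in G^{(2)}\leqslant\ker\alpha$ one has $\alpha(ghg^{-1}h^{-1})=1$, so $g$ and $h$ commute if and only if $ghg^{-1}\in fLf^{-1}$, equivalently $h\in g^{-1}(fLf^{-1})g=\mathrm{Stab}_G(g^{-1}v_0)$. Hence
\[
\mathrm{C}_{fLf^{-1}}(g_i)=\mathrm{Stab}_G(v_0)\cap\mathrm{Stab}_G(g_i^{-1}v_0),
\]
the pointwise stabilizer of a finite geodesic in $T$, which has finite index in $fLf^{-1}$ as an iterated intersection of edge stabilizers. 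Intersecting over $i=1,2$ exhibits $\mathrm{C}_G(\{g_1,g_2\})$ as a finite-index subgroup of $fLf^{-1}$. The main technical hurdle will be the containment $\mathrm{C}_G(\{g_1,g_2\})\leqslant fLf^{-1}$: once the distinct-axes and no-reflection-commutes-with-non-trivial-translation observations force every centralizing element to be elliptic and to fix both axes pointwise, the ``$\alpha$-trick'' turns commutation into a tree-stabilizer condition and finite index follows automatically.
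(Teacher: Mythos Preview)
Your proof is correct, and its overall architecture---freeness of $G^{(2)}$ via the affine map $\alpha$, distinct axes on the Bass--Serre tree, containment of the centralizer in a vertex stabilizer---parallels the paper's. The arguments diverge, however, in how finite index is obtained.

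The paper observes that $G^{(2)}$ lies in the normal closure $N$ of $L$ in $G$ (since $G/N$ is cyclic) and that every element of $N$, being a product of conjugates $t^ict^{-i}$, centralizes a finite-index subgroup of $L$. Hence $g_1,g_2$ already centralize some $K\leqslant L$ of finite index; once $\mathrm{C}_G(\{g_1,g_2\})\leqslant fLf^{-1}$ is known, the fact that $G$ commensurates $L$ forces $K$, and therefore the centralizer, to have finite index in $fLf^{-1}$. Your ``$\alpha$-trick'' bypasses this: using $fLf^{-1}\cap\ker\alpha=\{1\}$ you convert commutation of $h\in fLf^{-1}$ with $g_i\in\ker\alpha$ into the tree-stabilizer condition $h\in\mathrm{Stab}_G(v_0)\cap\mathrm{Stab}_G(g_i^{-1}v_0)$, whose finite index follows simply because edge groups have finite index in vertex groups. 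Your route is more self-contained (it never invokes commensuration of $L$) and along the way proves the sharper intermediate fact that every centralizing element fixes \emph{both} axes pointwise, whereas the paper only shows it fixes \emph{some} vertex via a case analysis on $\gamma_1\cap\gamma_2$. The paper's argument, in turn, isolates the conceptual point that individual elements of $G^{(2)}$ centralize large pieces of $L$, which dovetails with the surrounding material on commensurators. One small quibble: $\gamma_1$ is a simplicial line, so the relevant translation group is $\mathbb{Z}$ rather than $\mathbb{R}$, but this does not affect your conclusion.
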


\begin{proof} Proposition \ref{prop:free-by-ab-by-cyc} implies that $G^{(2)}$ is free, and since $L'$ and $L''$ are proper subgroups of $L$, $G$ cannot be soluble
(it will contain non-abelian free subgroups being an HNN-extension in which both of the associated subgroups are proper subgroups of the base group),
hence $G^{(2)}$ is non-abelian.

Observe that the normal closure $N$, of $L$ in $G$, is generated by the elements $t^i c t^{-i}$, where $i \in \Z$ and $c \in L$. Evidently any such element centralizes the finite-index subgroup $t^i L t^{-i} \cap L$, of $L$. Since each $g \in N$ is a product of finitely many such elements, we conclude that $g$ must also centralize
a finite-index subgroup of $L$ in $G$.

Consider any two non-commuting elements $g_1,g_2 \in G^{(2)}$. Since $G/N$ is cyclic (generated by the image of $t$), $G^{(2)} \subseteq N$, so $g_1$ and
$g_2$ both centralize some finite-index subgroup $K$ of $L$. Let $T$ be the Bass-Serre tree for the splitting of $G$ as an HNN-extension of $L$.
Note that the subgroup $\langle g_1,g_2\rangle \subseteq G^{(2)}$ acts
freely on $T$
(see the proof of Proposition~\ref{prop:free-by-ab-by-cyc}), so each $g_j$ acts
as a hyperbolic isometry of $T$ with an axis $\ell_j$, $j=1,2$.

If $\ell_1=\ell_2$ then the rank $2$ free subgroup $\langle g_1,g_2\rangle$
acts on the simplicial line $\ell_1$ by isometries. This action must have a
non-trivial kernel, because the group of all simplicial isometries of $\ell_1$
is isomorphic to the infinite dihedral group. This means that a non-trivial
element of $\langle g_1,g_2\rangle$
fixes $\ell_1$ pointwise, contradicting  the freeness of the action
of this subgroup on $T$.

Hence $\ell_1$ and $\ell_2$ must be distinct. Since $\mathrm{C}_G(\{g_1,g_2\})$
preserves each of these axes setwise, this centralizer must fix a vertex of $T$:
if $\ell_1 \cap \ell_2$ is a finite segment, it will fix a vertex of this
segment; if $\ell_1 \cap \ell_2$ is an infinite ray, it will fix all of it;
finally, if
$\ell_1 \cap \ell_2=\emptyset$, it will fix all vertices of the unique
geodesic segment connecting these two axes.
The vertex stabilizers for the action of $G$ on $T$ are conjugates of $L$,
so there exists $f \in G$ such that $\mathrm{C}_G(\{g_1,g_2\}) \subseteq fLf^{-1}$. Recall that $G$ commensurates $L$,
hence $ L \cap fLf^{-1}$ has finite index in $fLf^{-1}$.
Since $\mathrm{C}_G(\{g_1,g_2\})$ contains $K$ and $|L:K|<\infty$, we conclude that $|fLf^{-1}:\mathrm{C}_G(\{g_1,g_2\})|<\infty$, as claimed.
\end{proof}

\begin{thm}\label{thm:notvirtbiaut}
If $A$ has infinite order then the group $G=G(A,L')$ is not virtually biautomatic.
\end{thm}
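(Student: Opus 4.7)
My plan is to argue by contradiction: suppose some finite-index subgroup $G_0 \leqslant G$ is biautomatic. The engine is to produce a finite subset $X \subseteq G_0$ for which $H' \coloneq \mathrm{C}_{G_0}(X)$ is abelian; then Corollary~\ref{cor:comm_of_centralizer}, applied inside the biautomatic group $G_0$, yields a finite-index subgroup $\comm_{G_0}^0(H') \lhd \comm_{G_0}(H')$ each of whose cyclic subgroups centralises a finite-index subgroup of $H'$. The contradiction then comes from exhibiting a power $t^k \in G_0$ that lies in $\comm_{G_0}(H')$ and whose conjugation action on $H' \otimes \mathbb{Q}$ is a conjugate of $A^k \in GL(n,\mathbb{Q})$: a further multiple of $k$ lands $t^k$ inside $\comm_{G_0}^0(H')$, so by the Corollary it must centralise a finite-index subgroup of $H'$, forcing the corresponding power of $A$ to act trivially on $L \otimes \mathbb{Q}$ and contradicting the infinite order of $A$.

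The choice of $X$ splits along the dichotomy of Lemmas~\ref{lem:selfcent}~and~\ref{lem:morecents-new}. In the generic case where both $L'$ and $L''$ are proper in $L$, I would invoke Lemma~\ref{lem:morecents-new}: since $G_0 \cap G^{(2)}$ has finite index in the non-abelian free group $G^{(2)}$, it is itself non-abelian free, so we can choose non-commuting $g_1, g_2$ in it. Setting $X = \{g_1, g_2\}$, the lemma furnishes $f \in G$ so that $\mathrm{C}_G(X)$ is a finite-index subgroup of $fLf^{-1}$, and hence $H'$ is abelian and commensurable with $L$. Because $G$ commensurates every conjugate of $L$, one has $\comm_{G_0}(H') = G_0$, and via conjugation by $f$ the action of $t^k$ on $H' \otimes \mathbb{Q}$ identifies with the action of a $GL(n,\mathbb{Q})$-conjugate of $A^k$ on $L \otimes \mathbb{Q}$.

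In the remaining \emph{ascending} case, when $L' = L$ or $L'' = L$, the group $G$ is metabelian, and the natural homomorphism $\tau\colon G \to \comm(L) \cong GL(n,\mathbb{Q})$ (satisfying $\tau(t) = A$) has abelian kernel $K$: either $K = L$ when $L' = L = L''$, or otherwise $K$ is the increasing union of conjugates of $L$ by powers of $t^{\pm 1}$, isomorphic to $\mathbb{Z}[1/|\det A|]^n$. Choosing a basis $c_1,\dots,c_n$ of $L$ and positive integers $m_i$ so that $X \coloneq \{c_1^{m_1}, \ldots, c_n^{m_n}\} \subseteq L \cap G_0$, one has $\mathrm{C}_G(X) \subseteq K$ --- because centralising a finite-index subgroup of $L$ is equivalent to acting trivially on $L \otimes \mathbb{Q}$ --- so $H' = \mathrm{C}_{G_0}(X) \subseteq K \cap G_0$ is abelian. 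Since $K$ is normal in $G$ and $t^k \in G_0$, the element $t^k$ normalises $K \cap G_0$ and hence lies in $\comm_{G_0}(H')$, and its action on $(K \cap G_0) \otimes \mathbb{Q} = L \otimes \mathbb{Q}$ is exactly $A^k$.

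The main subtlety I anticipate lies in the ascending sub-case $L' = L \neq L''$ (and its mirror): there $K \cap G_0$ may be infinitely generated, so one must confirm that $\comm(K \cap G_0)$ is nevertheless canonically identified with $GL(n,\mathbb{Q})$. This follows from the fact that every isomorphism between finite-index subgroups of $K \cap G_0$ extends uniquely to a $\mathbb{Q}$-linear automorphism of $(K \cap G_0) \otimes \mathbb{Q} = L \otimes \mathbb{Q}$. Granted this identification, in both cases the image of $t^k$ in $\comm(H') \cong GL(n,\mathbb{Q})$ is (a conjugate of) $A^k$, of infinite order, which contradicts the finiteness of the image of $\comm_{G_0}(H')$ supplied by Corollary~\ref{cor:comm_of_centralizer}.
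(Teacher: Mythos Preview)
Your argument in the generic case (both $L'$ and $L''$ proper in $L$) is essentially identical to the paper's: pick non-commuting $g_1,g_2\in G_0\cap G^{(2)}$, invoke Lemma~\ref{lem:morecents-new} to realise $M=\mathrm{C}_{G_0}(\{g_1,g_2\})$ as a finite-index subgroup of some $fLf^{-1}$, and apply Corollary~\ref{cor:comm_of_centralizer} to the element $ft^lf^{-1}\in G_0$ (equivalently, to a power of $t$, since the image in $\comm(H')$ is conjugate to a power of $A$).

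The difference lies in the ascending case $L'=L$ or $L''=L$. The paper dispatches this by an external citation: $G$ is then metabelian, so any biautomatic finite-index subgroup $G_0$ is soluble and biautomatic, hence virtually abelian by Groves--Hermiller~\cite[Main Theorem]{grhe}, forcing $A$ to have finite order. Your route through Corollary~\ref{cor:comm_of_centralizer} applied to $H'=\mathrm{C}_{G_0}(X)=K\cap G_0$ is a genuine self-contained alternative that avoids this citation. Two remarks: first, the ``subtlety'' you anticipate is slightly off target---you do not need to identify $\comm(H')$ with $GL(n,\Q)$, only the natural \emph{injection} $\comm(H')\hookrightarrow GL(L\otimes\Q)$, and even that is more than necessary, since once $t^{kN}$ centralises a finite-index subgroup $H''\leqslant H'$ it centralises $H''\cap L$, which has finite index in $L$ (because $L\cap G_0\leqslant H'$), giving $A^{kN}=I$ directly. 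Second, in the strictly ascending subcase your $H'=K\cap G_0$ is in fact infinitely generated, and this alone already contradicts biautomaticity of $G_0$, since centralisers of finite subsets in biautomatic groups are $\cL$-quasiconvex and hence finitely generated; this gives an even quicker conclusion than the one you wrote.
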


\begin{proof}
As in the proof of Theorem~\ref{thm:notbiaut}, the case when either $L=L'$ or
$L=L''$ follows from the result of Groves-Hermiller~\cite[Main Theorem]{grhe}, so we assume
from now on that both $L'$ and $L''$ are proper subgroups of $L$.

Let $H$ be a finite-index subgroup of $G$, then $|G^{(2)}:G^{(2)}\cap H|<\infty$, so $G^{(2)}\cap H$  is a non-abelian free subgroup by Lemma~\ref{lem:morecents-new}.
Choose arbitrary two non-commuting elements $g_1,g_2 \in H \cap G^{(2)}$. The same lemma states that $\mathrm{C}_G(\{g_1,g_2\})$ is a
finite-index subgroup of $fLf^{-1}$ for some $f \in G$. Since $G$ commensurates $L$, it also commensurates $fLf^{-1}$, as well as its finite-index subgroup
$M\coloneq \mathrm{C}_H(\{g_1,g_2\})=H \cap \mathrm{C}_G(\{g_1,g_2\})$. It follows that $M$ is an abelian subgroup commensurated by $H$.
Now, Corollary \ref{cor:comm_of_centralizer} implies that $H$ cannot be biautomatic as $ft^lf^{-1} \in H$, for some $l \in \N$, and no non-trivial power of this element can centralize a finite-index subgroup of $fLf^{-1}$ (since $A^l$ has infinite order).
\end{proof}

\section{Explicit examples} \label{sec:2dim}

Throughout this section, it will be sufficient to specialize the
groups $G(A,L')$, defined in Section~\ref{sec:comm_HNN}, to the case when $L=\Z^2$ has rank two. We will write
$M_2(\Q)$ to denote the ring of $2\times 2$ matrices with
rational entries.  Before starting, we recall that the classification,
up to conjugacy, of square matrices over a field $k$ is equivalent to
the classification, up to isomorphism, of finitely generated torsion
modules for the polynomial ring $k[x]$, which is a principal ideal
domain~\cite[Ch.~XI]{macbir}.  In particular, if $f(x)\in k[x]$ is
a polynomial that is square-free (i.e., not divisible by the square
of any irreducible polynomial) then there is exactly one conjugacy
class of square matrices over $k$ with characteristic polynomial
$f(x)$: this is the analogue for $k[x]$ of the familiar statement
(for $\Z$-modules) that there is exactly one abelian group of order $n$
provided that $n$ is square-free.

\begin{prop}\label{prop:crit_for_2x2_matr}
If $A\in M_2(\Q)$, then $A$ is conjugate to an element of $SO(2)$ in $GL(2,\R)$
if and only if $\det(A)=1$ and either $A=\pm I$ or
 $-2<\tr(A)<2$.  Such a matrix $A$ has finite order if and only if
$\tr(A)\in\Z$.
\end{prop}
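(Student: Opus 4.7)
My plan is to handle the two assertions in the proposition separately and to treat the forward and reverse implications of the characterization of conjugacy to $SO(2)$ independently.

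For the forward direction, I would observe that both $\det$ and $\tr$ are invariants of conjugation in $GL(2,\R)$. Any $R \in SO(2)$ has $\det(R)=1$ and $\tr(R) = 2\cos\theta \in [-2,2]$, with $\tr(R)=\pm 2$ only when $R = \pm I$. So if $A$ is conjugate in $GL(2,\R)$ to some $R \in SO(2)$, then $\det(A)=1$ and $\tr(A) \in [-2,2]$, with the equality cases forcing $A = \pm I$ (since $\pm I$ is central and hence conjugate only to itself).

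For the reverse direction, I would dispose of the case $A = \pm I$ immediately (it already lies in $SO(2)$) and then assume $\det(A)=1$ and $-2 < \tr(A) < 2$. The characteristic polynomial is $f(x) = x^2 - \tr(A) x + 1$, whose discriminant $\tr(A)^2 - 4$ is negative, so $f(x)$ is irreducible over $\R$, hence square-free. Now I invoke the classification alluded to in the introductory paragraph of this section: square-free characteristic polynomials determine a unique conjugacy class of $2\times 2$ matrices over a field. Picking $\theta \in (0,\pi)$ with $2\cos\theta = \tr(A)$, the rotation $R_\theta \in SO(2)$ has characteristic polynomial $f(x)$, so $A$ and $R_\theta$ lie in the same $GL(2,\R)$-conjugacy class.

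For the order statement, I would use that the conjugate $R_\theta$ produced above has order $n < \infty$ if and only if $\theta$ is a rational multiple of $\pi$, equivalently, $e^{i\theta}$ is a root of unity. If $A$ has finite order then its eigenvalues $e^{\pm i\theta}$ are roots of unity, so $\tr(A) = 2\cos\theta = e^{i\theta}+e^{-i\theta}$ is an algebraic integer; being rational (as $A \in M_2(\Q)$), it must be an integer. Conversely, if $\tr(A) \in \Z$ then combined with $-2 \le \tr(A) \le 2$ we get $\tr(A) \in \{-2,-1,0,1,2\}$, giving $\cos\theta \in \{-1,-\tfrac12,0,\tfrac12,1\}$; in each case $\theta$ is a rational multiple of $\pi$, so $R_\theta$ and hence $A$ has finite order (with orders $2,3,4,6,1$ respectively).

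No step here seems genuinely difficult; the mildly subtle point is the appeal to the classification of matrices with square-free characteristic polynomial to pass from agreement of characteristic polynomials to actual $GL(2,\R)$-conjugacy, which is why the authors included the reminder about this classification earlier in the section. Everything else is a direct computation with traces, determinants, and the observation that rational algebraic integers are integers.
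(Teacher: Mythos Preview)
Your proof is correct and follows essentially the same approach as the paper's: both directions of the conjugacy characterization are handled identically, via conjugation-invariance of $\det$ and $\tr$ for one direction and irreducibility of the characteristic polynomial (hence uniqueness of the conjugacy class) for the other. The only minor variation is in showing that finite order forces $\tr(A)\in\Z$: the paper argues that the subring of $M_2(\Q)$ generated by $A$ has finitely generated additive group, forcing the characteristic polynomial into $\Z[X]$, whereas you observe directly that the eigenvalues are roots of unity, hence algebraic integers, so their rational sum $\tr(A)$ is an integer --- both arguments are standard and equally short.
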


\begin{proof}
Matrices in $SO(2)$ have the claimed properties, and these are
not changed by conjugation.  Conversely, if $A$ has the claimed
properties and $A\neq \pm I$, then the characteristic polynomial
of $A$ has the form $X^2-\tr(A)X +1$, and is irreducible over $\R$.
Any two matrices with this characteristic polynomial are conjugate
in $GL(2,\R)$.

If $A$ has finite order, the additive group of the subring of
$M_2(\Q)$ generated by $A$ is finitely generated, from which it
follows that the characteristic polynomial of $A$ must lie in $\Z[X]$.
For the converse, the choices of $-2,-1,0,1,2$ for $\tr(A)$ give
rise to elements of order $2,3,4,6,1$ respectively.
\end{proof}

\begin{ex}\label{ex:mats}
As examples, the matrix
\begin{equation}\label{eq:A_k/m}
A_{k/m}\coloneq\begin{pmatrix} 0&-1 \\ 1& k/m \\ \end{pmatrix},
\end{equation}
for $k,m\in \Z$ with $m>0$, is conjugate to a matrix in $SO(2)$
if and only if $-2m<k<2m$, and has infinite order provided
that $k\neq -m,0,m$.

Pythagorean triples give rise to matrices of infinite order
in $GL(2,\Q)\cap SO(2)$.
For example we shall consider the matrix $A_P$, defined by
\begin{equation}\label{eq:A_P}
A_P\coloneq \begin{pmatrix} 3/5& -4/5 \\ 4/5 & 3/5 \\ \end{pmatrix}.
\end{equation}

\end{ex}

A combination of Theorem~\ref{thm:iscat0} with Theorem \ref{thm:notbiaut} yields the following.
\begin{cor}
If $A\in GL(2,\Q)$ has infinite order and is conjugate to an element of $SO(2)$  in $GL(2,\R)$
then for any $L'$, $G(A,L')$ is \cat{} and is not biautomatic.
\end{cor}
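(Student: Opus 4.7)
The plan is to deduce this corollary directly from the two theorems cited, namely Theorem~\ref{thm:iscat0} and Theorem~\ref{thm:notbiaut}, with essentially no additional work required.

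First I would observe that $SO(2)\subseteq O(2)$, so any matrix in $SO(2)$ is orthogonal. Consequently, if $A\in GL(2,\Q)$ is conjugate in $GL(2,\R)$ to an element of $SO(2)$, then $A$ is in particular conjugate in $GL(2,\R)$ to an orthogonal matrix. Invoking Theorem~\ref{thm:iscat0} with $n=2$ (which applies for any choice of the finite-index subgroup $L'\leqslant L = \Z^2$ satisfying $A\,L'\subseteq L$), we conclude that $G(A,L')$ is a \cat{} group.

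Next, since $A$ has infinite order by hypothesis, Theorem~\ref{thm:notbiaut} immediately gives that $G(A,L')$ is not biautomatic. (In fact, by Theorem~\ref{thm:notvirtbiaut}, $G(A,L')$ is not even virtually biautomatic, though the corollary only asserts the weaker statement.)

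There is no real obstacle in this proof, as all the substantive work has already been carried out in the proofs of Theorems~\ref{thm:iscat0} and~\ref{thm:notbiaut}. The only minor subtlety worth noting is that the hypothesis uses $SO(2)$ rather than $O(2)$, but this is consistent with the assumption that $A$ has infinite order: every element of $O(2)\setminus SO(2)$ is a reflection and thus has order two, so requiring conjugacy to an element of $SO(2)$ costs nothing in the presence of the infinite-order assumption.
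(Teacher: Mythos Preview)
Your proposal is correct and matches the paper's approach exactly: the paper simply states that the corollary is obtained by combining Theorem~\ref{thm:iscat0} with Theorem~\ref{thm:notbiaut}, without giving any further argument. Your additional remarks about $SO(2)\subseteq O(2)$ and the redundancy of the $SO(2)$ hypothesis under the infinite-order assumption are accurate and harmless elaborations.
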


\begin{ex}\label{ex:groups}
For more concrete examples, consider the groups
\[G_{k,m}\coloneq\langle a,b,t \,\|\, [a,b]=1,\,\, tat^{-1}=b,\,\,
tb^m t^{-1}= a^{-m}b^k \rangle.\]

This group is \cat{} whenever $-2m<k<2m$ and is not biautomatic
provided that $k\neq -m,0,m$.  The group $G_{k,m}$ is of the form
$G(A_{k/m},L')$, where $L'=\langle (1,0)^T, (0,m)^T\rangle$ has index $m$ in $L$.  In the case when ${\rm gcd}(k,m)=1$,
$L'$ is as large as possible, so $G_{k,m}=G(A_{k/m})$.

The first example of a group of this type that we found was the
group $G'_P\coloneq G(A_P,(5\Z)^2)$, where $A_P$ is the matrix
defined in \eqref{eq:A_P} .
Here are presentations for the groups $G_P\coloneq G(A_P)$
mentioned in the introduction (which corresponds to the case
$L'=L \cap A_P^{-1}\,L=\langle (2,-1)^T,(1,2)^T\rangle$) and $G'_P$.

\[G_P=\langle a,b,t \,\|\, [a,b]=1,\,\, ta^2b^{-1}t^{-1}=a^2b,\,\,
tab^2t^{-1}= a^{-1}b^2 \rangle,\]
\[G'_P=\langle a,b,t \,\|\, [a,b]=1,\,\, ta^5t^{-1}=a^3b^4,\,\,
tb^5t^{-1}= a^{-4}b^3 \rangle
.\]

\end{ex}

In the case when $G(A,L')$ is \cat{}, there is usually only one choice
of \cat{} metric on $L\otimes \R$ up to homothety.

\begin{cor}
  Suppose that $A\in GL(2,\Q)$ has order at least 3 and is conjugate to an
  element of $SO(2)$   in $GL(2,\R)$.  In this case, the inner product
  $\langle \,\cdot\,,\,\cdot\,\rangle_{L}$ on $L\otimes \R$, defined by
\eqref{eq:inner_prod_formula} when viewing $L$ as a subgroup of the
\cat{} group $G(A,L')$, is, up to multiplication by a scalar, the unique
inner product that is preserved by $A$.
\end{cor}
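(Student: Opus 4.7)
The plan is to exploit the fact that the centralizer of a rotation of order at least three in $GL(2,\R)$ is small. By Theorem~\ref{thm:addendum}(4') (applied with $\varphi=t$, which commensurates $L$ inside the \cat{} group $G(A,L')$), the inner product $\langle\,\cdot\,,\,\cdot\,\rangle_L$ is preserved by the action of $A$ on $L\otimes\R$. So the statement amounts to showing that the space of $A$-invariant inner products on $L\otimes\R$ is one-dimensional.

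Suppose $\langle\,\cdot\,,\,\cdot\,\rangle'$ is any inner product on $L\otimes\R$ that is preserved by $A$. Standard linear algebra gives a unique operator $S$ on $L\otimes\R$ that is positive-definite and self-adjoint with respect to $\langle\,\cdot\,,\,\cdot\,\rangle_L$ and satisfies
\[
\langle x,y\rangle' = \langle Sx,y\rangle_L \quad \text{for all } x,y\in L\otimes\R.
\]
Because $A$ is orthogonal with respect to both inner products, its adjoint with respect to each is $A^{-1}$. Differentiating the identity $\langle A x,A y\rangle'=\langle x,y\rangle'$ through $S$ shows that $S$ commutes with $A$.

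The next step is to identify the centralizer of $A$ in $\mathrm{End}(L\otimes\R)$. By Proposition~\ref{prop:crit_for_2x2_matr}, the hypotheses $A\in SO(2)$ up to conjugacy, order $\ge 3$, and $A\neq\pm I$ force $-2<\tr(A)<2$, so the characteristic polynomial $X^2-\tr(A)X+1$ is irreducible over $\R$ and coincides with the minimal polynomial of $A$. Consequently $L\otimes \R$ is an irreducible $\R[A]$-module, and Schur's lemma (or a direct computation in a basis that puts $A$ into rotation form) identifies the centralizer of $A$ in $\mathrm{End}(L\otimes\R)$ with $\R[A]=\R\cdot I\oplus \R\cdot A$. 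Hence $S=aI+bA$ for some $a,b\in\R$.

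Finally, self-adjointness of $S$ with respect to $\langle\,\cdot\,,\,\cdot\,\rangle_L$ gives
\[
aI+bA=S=S^{*}=aI+bA^{-1},
\]
so $b(A-A^{-1})=0$. Since $A$ has order at least $3$, $A^2\neq I$, hence $A\neq A^{-1}$ and we conclude $b=0$. Thus $S=aI$ and $\langle\,\cdot\,,\,\cdot\,\rangle'=a\langle\,\cdot\,,\,\cdot\,\rangle_L$, as required. The only mild obstacle is getting the centralizer computation exactly right; everything else is immediate once we observe that a rotation of order $\ge 3$ has irreducible minimal polynomial over $\R$ and is not equal to its own inverse.
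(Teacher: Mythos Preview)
Your proof is correct. The approach, however, differs from the paper's, which is more direct and geometric: the paper picks any non-zero $a\in L$, observes that $\{a,Aa\}$ is a basis of $L\otimes\R$ whenever $A\neq\pm I_2$, and then notes that for \emph{any} $A$-invariant inner product one has $\langle Aa,Aa\rangle=\langle a,a\rangle$ and $\langle a,Aa\rangle=\langle a,a\rangle\cdot\tr(A)/2$ (since $A$ acts as a rotation through the angle $\theta$ with $2\cos\theta=\tr(A)$). Thus all entries of the Gram matrix are determined by the single number $\langle a,a\rangle$, proving uniqueness up to scale in two lines. Your argument instead encodes the second inner product as $\langle S\cdot,\cdot\rangle_L$, shows $S$ commutes with $A$, computes the centralizer of $A$ via the irreducibility of its minimal polynomial, and then uses self-adjointness to force $S$ to be scalar. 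This is a perfectly valid structural route---it generalizes more readily (e.g., to higher dimensions or to operators with no real eigenvalues)---but it is heavier machinery than the explicit Gram-matrix computation the paper uses. One small stylistic point: the phrase ``differentiating the identity'' is misleading, since you are simply unwinding the definition of $S$; you might replace it with ``rewriting the invariance of $\langle\cdot,\cdot\rangle'$ in terms of $S$''.
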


\begin{proof}
  Let $a\in L$ be any non-identity element, and suppose that the
  \cat{} structure on $G(A,L')$ is chosen so that   $a$ acts on
  $L\otimes \R$ as translation by some distance $\lambda>0$.  In
  this case $\langle a,a\rangle_L=\lambda^2$, but also
  \[ \langle A\,a,A\,a\rangle_L=\lambda^2, \quad
  \langle A\,a,a\rangle_L = \langle a,A\,a\rangle_L = \lambda^2 \tr(A)/2,\]
  because $A$ acts on $L\otimes \R$ as rotation through an angle
  $\theta$ with $2\cos(\theta)=\tr(A)$.  The uniqueness follows, because
  for $A\neq \pm I_2$, $a$ and $A\,a$ form a basis of $L\otimes \R$.
\end{proof}

Figure~\ref{fig:fig2} below depicts the unique geometries on $L\otimes \R$
for the seven \cat{} groups $G_{k,2}$ and the \cat{} group $G_P$.

\begin{figure}[htp]
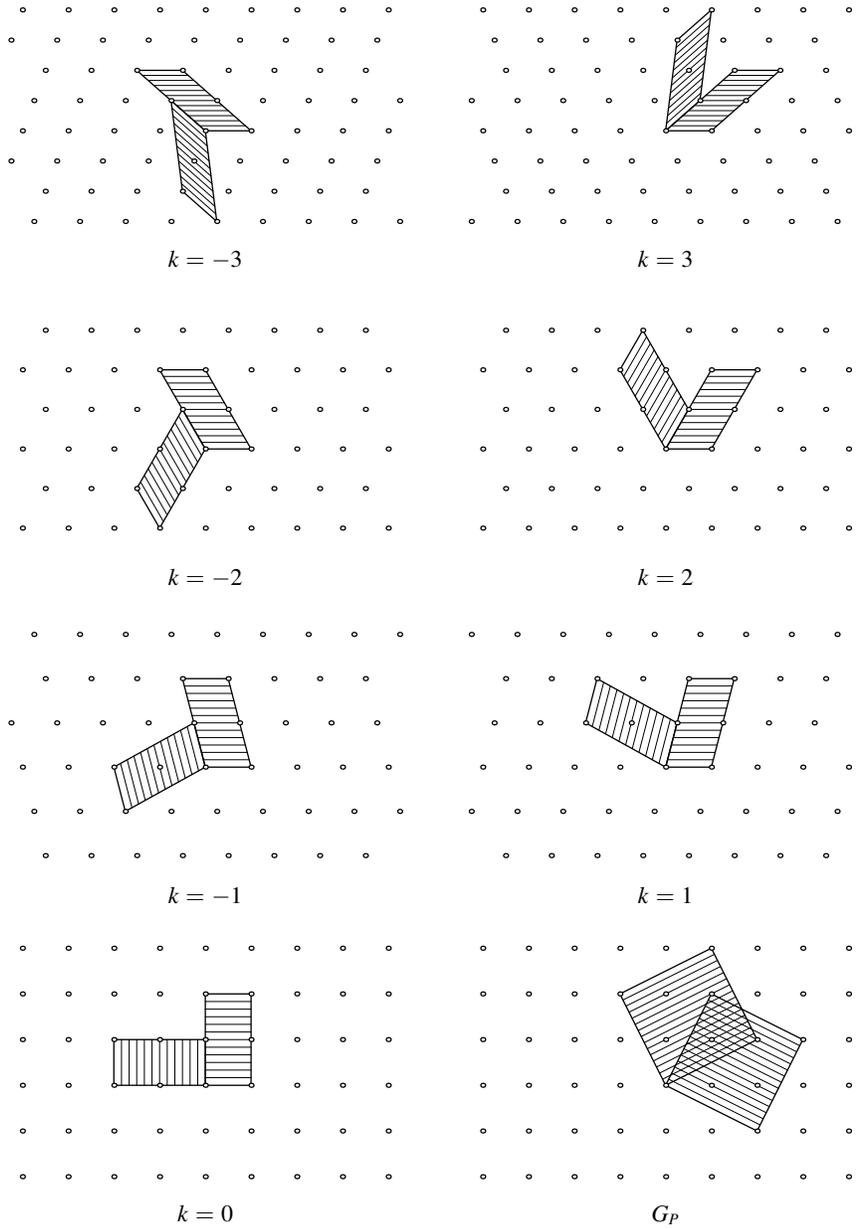

\captionsetup[subfigure]{labelformat=empty}
\centering
\subfloat[$k=-3$]{\includegraphics[height=33mm]{pics/r2-3.mps}}\qquad
\subfloat[$k=3$]{\includegraphics[height=33mm]{pics/r23.mps}}

\subfloat[$k=-2$]{\includegraphics[height=33mm]{pics/r2-2.mps}}\qquad
\subfloat[$k=2$]{\includegraphics[height=33mm]{pics/r22.mps}}

\subfloat[$k=-1$]{\includegraphics[height=33mm]{pics/r2-1.mps}}\qquad
\subfloat[$k=1$]{\includegraphics[height=33mm]{pics/r21.mps}}

\subfloat[$k=0$]{\includegraphics[height=33mm]{pics/r20.mps}}\qquad
\subfloat[$G_P$]{\includegraphics[height=33mm]{pics/rP.mps}}

\caption{The geometry of $L\otimes \R$ in the \cat{} groups $G_{k,2}$ and
  $G_P$.  Dots represent points of $L$ and the shaded regions
  represent the fundamental domains for $L'$ and $L''$ that are
  implied by the given presentations.
  In each picture $a$ acts as a horizontal translation and $t$
  acts as rotation through $\arccos(k/4)$ for~$G_{k,2}$ and
  $\arccos(3/5)$ for $G_P$.} \label{fig:fig2}

\end{figure}

In \cite[Question~2.7]{Bestvina-list} D. Wise asked whether every \cat{} group $G$ has
the following property: for any elements $a,b\in G$, there exists
$n>0$ so that the subgroup $\langle a^n,b^n\rangle$ is either
abelian or free.

\begin{cor}\label{cor:wise}
If $A$ has infinite order and is conjugate to an element of $SO(2)$
in $GL(2,\R)$ then the group $G=G(A,L')$, for any suitable
choice of $L'$, is \cat{} but it is not virtually biautomatic and it does not have Wise's property. In particular, this applies to the groups $G_P$, $G'_P$ and $G_{k,m}$ from Example~\ref{ex:groups}, provided $-2m<k<2m$ and $k \neq 0,\pm m$.
\end{cor}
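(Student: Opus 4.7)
The \cat{} conclusion follows from Theorem~\ref{thm:iscat0}, and the failure of virtual biautomaticity from Theorem~\ref{thm:notvirtbiaut}. The substantive task is to refute Wise's property by producing $a,b \in G$ such that $H_n \coloneq \langle a^n,b^n\rangle$ is neither abelian nor free for every $n \geq 1$. I take $a \in L \setminus \{1\}$ (for instance, a basis element of $L \cong \Z^2$) and $b \coloneq t$, the stable letter, and fix an arbitrary $n \geq 1$. The key underlying fact is that, since $A$ has infinite order and is conjugate in $GL(2,\R)$ to a rotation, $A^n \neq \pm I$ (otherwise some power of $A$ would be trivial), and hence $A^n$ has no real eigenvalues.

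To see that $H_n$ is non-abelian, I use the affine action $\alpha \colon G \to AGL(2,\R)$ from the proof of Proposition~\ref{prop:free-by-ab-by-cyc}: elements of $L$ act as translations of $\R^2$ and $t$ acts by multiplication by the matrix $A$. A direct computation of the commutator yields
\[\alpha([a^n,t^n])(x) = x - n(A^n - I)\,a\qquad\text{for all } x \in \R^2,\]
where $a$ is identified with its image in $L\otimes \R$. Since $A^n$ has no eigenvector with eigenvalue~$1$, the matrix $A^n - I$ is invertible, so $(A^n - I)a \neq 0$; hence $[a^n,t^n] \neq 1$ in $G$.

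To see that $H_n$ is not free, I will show that $H_n \cap L$ has rank $2$ in $L$. Set $L_n' \coloneq L' \cap A^{-1}L' \cap \cdots \cap A^{-(n-1)}L'$. As the intersection of finitely many full-rank lattices in $L\otimes\Q$, this is a finite-index subgroup of $L$. For any $x \in L_n'$, each of $x, Ax, \ldots, A^{n-1}x$ lies in $L'$, so the HNN relation $tct^{-1}=Ac$ can be iterated $n$ times to give $t^n x t^{-n} = A^n x \in L$ in $G$. Choose $M \in \N$ large enough that $a^{nM} \in L_n'$, and put $g \coloneq t^n a^{nM} t^{-n} \in H_n \cap L$; under $L \otimes \R \cong \R^2$, the element $g$ corresponds to the vector $nM\cdot A^n a$. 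Since $A^n$ has no real eigenvectors, $a$ and $A^n a$ are $\R$-linearly independent, so $a^n$ and $g$ generate a rank-$2$ subgroup of $L$. Hence $H_n \supseteq \Z^2$, and so $H_n$ is not free.

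The main obstacle is the non-freeness step: one must iterate the HNN relation $n$ times in succession, which is legal only on the finite-index sublattice $L_n' \leqslant L$, and then use the absence of a real eigenvector for a non-trivial planar rotation to certify that the two resulting elements of $H_n \cap L$ are linearly independent.
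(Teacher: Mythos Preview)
Your proof is correct and follows the same strategy as the paper's: choose a non-trivial $a\in L$ and the stable letter $t$, show $H_n=\langle a^n,t^n\rangle$ is non-abelian because $A^n$ fixes no non-zero vector, and show $H_n$ is not free by exhibiting a rank-$2$ subgroup of $H_n\cap L$ via $t^n a^{nM} t^{-n}$ for suitable $M$. The only cosmetic differences are that the paper gives the explicit exponent $M=m^n$ with $m=|L:L'|$ (rather than invoking the finite-index sublattice $L_n'$), and asserts the non-abelianness and the rank-$2$ conclusion directly, whereas you spell out the affine-action commutator computation and the linear-independence argument using that $A^n$ has no real eigenvector.
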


\begin{proof} The group $G$ is \cat{} by Theorem \ref{thm:iscat0} and it is not virtually biautomatic by Theorem~\ref{thm:notvirtbiaut}.

Let $a\in L$ be a non-identity element of $L\leqslant G$ and let $t \in G$ be
the stable letter from the presentation \eqref{eq:pres_of_G(A,L')}. Set  $m \coloneq |L/L'|$, then $b^m \in L'$ for every $b \in L$.
Given any $n \in \N$, the subgroup
$H_n=\langle a^n,t^n\rangle \leqslant G$ cannot be abelian because $t^n$ does not centralize
any non-identity element of $L$.  On the other hand, $H_n$ cannot be
free, because it contains the element $t^n a^{nm^n} t^{-n}\in L$,
which together with $a^n$ generates a finite-index subgroup of $L$.
\end{proof}

\begin{rem} Although the standard Tits alternative is still unknown for general \cat{} groups,
Proposition~\ref{prop:free-by-ab-by-cyc} implies that it does hold for any of the groups $G(A,L')$, defined by \eqref{eq:pres_of_G(A,L')}.
\end{rem}
%

\begin{rem} After hearing the first named author's talks on the results of this paper, M. Bridson suggested that the methods developed in his paper \cite{Bridson-combings} give an alternative proof that the groups $G(A,L')$ from Corollary~\ref{cor:wise} are not biautomatic. Indeed, \cite[Proposition~2.2]{Bridson-combings} states that any biautomatic structure on an abelian group can contain only finitely many commensurability classes of quasiconvex subgroups. If the matrix $A$ has infinite order and has no rational eigenvectors, then for any infinite cyclic quasiconvex subgroup $C\leqslant L$, its conjugates $t^i Ct^{-i}$, $i \in \N$, will all be quasiconvex, pairwise non-commensurable and will virtually be subgroups of $L$.
Moreover, using the work of Bridson and Gilman \cite{bri-gil} it may be possible to extend this method to prove the stronger statement that $G(A,L')$ does not admit any bounded bicombing such that the corresponding language is context-free.
\end{rem}

\section{Residual finiteness and non-Hopficity}\label{sec:n-H}
As mentioned in the introduction, the groups $G(A,L')$ are higher
dimensional generalizations of the Baumslag-Solitar groups.
Originally Baumslag and Solitar introduced their groups in \cite{Baum-Sol} as
the first examples of  non-Hopfian  one-relator groups.
It is not hard to see that nearly the same argument shows that many of our
groups are non-Hopfian.  The following result is closely related to
a theorem of D. Meier~\cite{meier}, although neither result is a direct
corollary of the other.

\begin{prop} \label{prop:nonhopf}
Let $A\in GL(2,\Q)$ satisfy $\det A \in \Z$.
Suppose that  there exists an integer
$m>1$ so that $mA$ is an integer matrix and $k\coloneq m \tr(A)$ is coprime to $m$.  Then the group $G=G(A)$, defined in Section~\ref{sec:comm_HNN}, is
non-Hopfian.
\end{prop}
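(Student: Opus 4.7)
My plan is to adapt the classical Baumslag--Solitar non-Hopfian argument. Define $\phi\colon G\to G$ on generators by $\phi(c)\coloneq mc$ for $c\in L$ (using additive notation on $L$) and $\phi(t)\coloneq t$; well-definedness is automatic, since for $c\in L'$ one has $mc\in L'$ and therefore $t(mc)t^{-1}=A(mc)=m(Ac)=\phi(Ac)$. I will show that $\phi$ is surjective but not injective, which implies that $G$ is non-Hopfian.

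For non-injectivity, first observe that the coprimality of $k=m\tr(A)$ with $m$ rules out $A\in M_2(\Z)$ (otherwise $\tr(A)\in\Z$ and $m\mid k$, contradicting $m>1$), so $L'\subsetneq L$; a parallel argument, using that $\det A\in\Z$ forces $\det A=\pm1$ whenever $A^{-1}$ is integer and hence $A\in M_2(\Z)$, shows $L''\subsetneq L$. Since a group is never the union of two proper subgroups, one can pick $c\in L\setminus(L'\cup L'')$. The word $w=[c,tct^{-1}]=c\,t\,c\,t^{-1}\,c^{-1}\,t\,c^{-1}\,t^{-1}$ is Britton-reduced---all three candidate pinches fail because $c,c^{-1}\notin L'$ and $c^{-1}\notin L''$---so $w\neq1$ in $G$. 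Yet $\phi(w)=[mc,(mA)c]$ lies in the abelian group $L$, and is therefore trivial.

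Surjectivity is the technical heart. Set $N\coloneq mA\in M_2(\Z)$ and $d\coloneq\det A\in\Z$; Cayley--Hamilton gives $N^{2}-kN+m^{2}dI=0$. Let $K\coloneq\phi(G)\cap L$; it suffices to show $K=L$, since then $\phi(G)\supseteq\langle L,t\rangle=G$. The hypothesis $mA\in M_2(\Z)$ forces $mL\subseteq L'$, so $K\supseteq mL$ and, via $t$-conjugation, $K\supseteq t(mL)t^{-1}=A(mL)=NL$. The key step is the inclusion $L''\subseteq NL+mL$: given $c\in L''$, write $c=Ab$ with $b\in L'$; then
\[
Nc=mA\cdot Ab=mA^{2}b=(kA-mdI)b=kc-mdb,
\]
so $kc=Nc+mdb\in NL+mL$, and clearly $mc\in mL\subseteq NL+mL$. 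Since $\gcd(k,m)=1$, a B\'ezout combination $pk+qm=1$ gives $c=p(kc)+q(mc)\in NL+mL$. Hence $L''\subseteq K$, and $L'=t^{-1}L''t\subseteq K$ by the other HNN-relation.

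Finally, reducing Cayley--Hamilton modulo $m$ yields $\bar{N}^{2}=k\bar{N}$ in $M_2(\Z/m)$. Because $k$ is invertible modulo $m$, the endomorphism $E\coloneq k^{-1}\bar{N}$ of $\bar{L}\coloneq L/mL$ is idempotent, which produces the internal direct sum decomposition $\bar{L}=\ker\bar{N}\oplus\mathrm{im}\,\bar{N}$. Identifying $\ker\bar{N}=L'/mL$ (since $Nc\in mL\iff Ac\in L\iff c\in L'$), this translates to $L=L'+NL+mL=L'+NL\subseteq K$, completing the proof of surjectivity. The main obstacle is the Cayley--Hamilton step producing $L''\subseteq NL+mL$: in rank at least two the naive inclusion $K\supseteq mL+NL$ is typically a proper subgroup of $L$ (for instance it has index $5$ in $L$ in the group $G_P$), and it is precisely the Cayley--Hamilton identity combined with the coprimality $\gcd(k,m)=1$ that is needed to recover the missing elements.
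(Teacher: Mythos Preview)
Your proof is correct and uses the same endomorphism $\phi$ as the paper ($c\mapsto mc$ on $L$, $t\mapsto t$); the non-injectivity argument is also essentially identical, the paper noting the single-element choice $c\in L\setminus(L'\cup L'')$ parenthetically. The difference is in the surjectivity step. The paper rewrites Cayley--Hamilton as $mA+m\ell A^{-1}=kI$ (with $\ell=\det A$), observes that $m\ell A^{-1}=kI-mA$ is an integer matrix so that $m\ell L\subseteq L''$, and then reads off the group identity $t(mc)t^{-1}\cdot t^{-1}(m\ell c)t=kc$ for every $c\in L$; since $mc\in\phi(G)$ as well, B\'ezout finishes immediately. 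Your route---first showing $L''\subseteq NL+mL$, conjugating by $t^{-1}$ to get $L'\subseteq K$, and then invoking the idempotent decomposition $L/mL=\ker\bar N\oplus\mathrm{im}\,\bar N$ to obtain $L=L'+NL$---is valid but longer; the mod-$m$ linear algebra becomes unnecessary once one sees the paper's one-line identity, which already exhibits $kc$ as a product of conjugates of elements of $mL$.
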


\begin{proof}
The characteristic polynomial
of $A$ is $X^2-(k/m)X+l$, where $l\coloneq \det A \in \Z$. This implies that
\begin{equation}\label{eq:comb_A_and_A-1}
mA+ml A^{-1}=k I_2,
\end{equation}
in particular, $ml A^{-1}$ is an integer matrix.
Since $mA$ is an integer matrix, we have
$A\,L^m=(mA)\,L \subseteq L$, hence $L^m\subseteq  A^{-1}\,L\cap L=L'$. Similarly, $L^{ml}\subseteq  A\,L\cap L=L''$. Thus
\begin{equation}\label{eq:L^m_in_L'_and_L''}
L^m\subseteq L' ~\mbox{ and }~ L^{ml} \subseteq L''.
\end{equation}

Let $a,b\in L\leqslant G$ be generators for $L \cong \Z^2$, and let $t\in G$ be
the stable letter.   Combining \eqref{eq:L^m_in_L'_and_L''} with \eqref{eq:comb_A_and_A-1}, we obtain the identity
\begin{equation}\label{eq:identity}
tc^mt^{-2}c^{ml}t= c^k~ \mbox{ for any } c\in L \mbox{ in } G.
\end{equation}

It is easy to check that the map $\phi$ defined by
\[\phi(a)\coloneq a^m,\quad \phi(b)\coloneq b^m,\quad \phi(t)\coloneq t\]
extends to an endomorphism $\phi:G\rightarrow G$. Since the image of
$\phi$ contains $t$ and $L^m=\langle a^m, b^m \rangle$, in view of \eqref{eq:identity} we see that it also contains
$a^k$ and $b^k$. Recall that $k$ is coprime to $m$ by the assumptions, hence the image of $\phi$ contains $a$ and $b$, so $\phi$ is
surjective.

It remains to show that $\phi$ is not injective.
The assumptions imply that both $A$ and $A^{-1}$ have some non-integer entries, hence $L'$ and $L''$ must be proper subgroups of $L$.
Choose arbitrary $c\in L-L'$ and $d\in L-L''$ (since $L$ cannot be the union of the
two proper subgroups $L'$ and $L''$, there are  elements of this form with $c=d$). Then the commutator
$[tct^{-1},d]$ is non-trivial in $G$ by Britton's Lemma for HNN-extensions (cf. \cite[Section IV.2]{L-S}), but it
is in the kernel of $\phi$ by \eqref{eq:L^m_in_L'_and_L''}.
\end{proof}

By a well-known theorem of Malcev (cf. \cite{Mal}) the groups from Proposition~\ref{prop:nonhopf} cannot be residually finite.
We can actually say more about the finite images of such groups.

\begin{cor}\label{cor:G-by-R-metab} Suppose that $G=G(A)$ is a group
satisfying the assumptions of Proposition~\ref{prop:nonhopf},
and $\phi:G \to G$  is the endomorphism defined in the proof of this proposition.
Let $R\coloneq \bigcup_{n=1}^\infty \ker(\phi^n) \lhd G$. Then the quotient
$G/R$ is abelian-by-cyclic. In particular, every finite quotient of $G$ is metabelian.
\end{cor}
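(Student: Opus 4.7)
The plan is to first show that modulo $R$ all commutators of the generators of the normal closure $N$ of $L$ in $G$ vanish, so that $NR/R$ is an abelian normal subgroup of $G/R$ with cyclic quotient; this gives the first assertion. For the ``in particular'' part I will argue that every finite-index normal subgroup of $G$ contains $R$, so every finite quotient of $G$ factors through the abelian-by-cyclic (hence metabelian) group $G/R$.

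For the core commutator computation, note first that $R$ is normal in $G$ as an ascending union of the normal subgroups $\ker(\phi^n)$, so by conjugating by $t^i$ it suffices to prove $[c_1,t^kc_2t^{-k}]\in R$ for all $c_1,c_2\in L$ and $k\in\Z$. Applying $\phi^n$ gives $[c_1^{m^n},t^kc_2^{m^n}t^{-k}]$ since $\phi|_L$ is multiplication by $m$ and $\phi(t)=t$. Writing $A=(1/m)B$ with $B=mA\in M_2(\Z)$, one computes $A^j(m^n c_2)=m^{n-j}B^jc_2$, which lies in $L$ for every $0\le j\le n$. Hence for $k\ge 0$ and $n\ge k$ the HNN relations (applied $k$ times in succession) yield $t^kc_2^{m^n}t^{-k}=A^kc_2^{m^n}\in L$, so the commutator with $c_1^{m^n}\in L$ is trivial because $L$ is abelian. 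For $k<0$, writing $k=-\ell$ with $\ell>0$, I use the identity
\[[x,\,t^{-\ell}yt^{\ell}]=t^{-\ell}[t^{\ell}xt^{-\ell},\,y]t^{\ell}\]
applied to $x=c_1^{m^n}$, $y=c_2^{m^n}$ to reduce to the positive case: for $n\ge\ell$ we have $t^{\ell}c_1^{m^n}t^{-\ell}\in L$, and the inner commutator is again trivial. Thus in all cases $[c_1,t^kc_2t^{-k}]\in\ker(\phi^n)\subseteq R$ for $n\ge|k|$.

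Since $N=\langle t^iLt^{-i}:i\in\Z\rangle$, the previous paragraph shows $[N,N]\subseteq R$, so $NR/R$ is abelian. Because $G/N\cong\Z$ is generated by the image of $t$, the quotient $(G/R)/(NR/R)$ is cyclic, and $G/R$ is abelian-by-cyclic as claimed (in particular, metabelian).

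For the final claim, let $K\lhd G$ have finite index $d$. The surjectivity of $\phi$ established in Proposition~\ref{prop:nonhopf} implies that each $\phi^{-n}(K)$ is also normal of index $d$ in $G$. Since $G=\langle a,b,t\rangle$ is finitely generated, it has only finitely many subgroups of index $d$, so $\phi^{-i}(K)=\phi^{-j}(K)$ for some $i<j$. Applying $\phi^i$ and using that $\phi^i$ is surjective yields $K=\phi^{-(j-i)}(K)$; iterating gives $K=\phi^{-s(j-i)}(K)$, so $\ker(\phi^{s(j-i)})\subseteq K$ for every $s\ge 1$. As each element of $R$ lies in $\ker(\phi^n)$ for some $n$, choosing $s$ with $s(j-i)\ge n$ shows $R\subseteq K$. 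Consequently every finite quotient of $G$ is a quotient of $G/R$, hence metabelian. The main obstacle in this plan is the commutator computation: the verification that $A^j(m^nc_2)\in L$ for $j\le n$ (which uses the hypothesis $mA\in M_2(\Z)$) together with the careful symmetrization needed to treat negative $k$.
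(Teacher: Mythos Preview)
Your proof is correct and follows essentially the same approach as the paper: both show that $[N,N]\subseteq R$ (where $N$ is the normal closure of $L$) by applying $\phi^n$ for large $n$ and using that $\phi^n$ pushes conjugates $t^i c t^{-i}$ into $L$, then conclude $G/R$ is abelian-by-cyclic. The only notable difference is cosmetic: the paper handles arbitrary $g,h\in N$ by first conjugating by a high power of $t$ to make all exponents positive, whereas you work directly with commutators of generators and treat negative $k$ via the commutator identity (which is the same conjugation trick in disguise); for the ``in particular'' clause the paper simply invokes the argument behind Malcev's theorem, while you spell out that pigeonhole argument explicitly.
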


\begin{proof}
Let $N$ be the normal closure of $L$ in $G$, so that $G/N$ is
infinite cyclic.
Any $g, h \in N$ can be written as products of elements of the form
$t^i c t^{-i}$, where $c \in L$ and  $i \in \Z$. Hence, we can choose $s \in \N$ such that $g'\coloneq t^s g t^{-s}$ and $h'\coloneq t^s h t^{-s}$ are products of elements of the form
$t^i c t^{-i}$, where $c \in L$ and  $i>0$.

Note that, in view of \eqref{eq:L^m_in_L'_and_L''}, if
$i \in \N$ then for each $j \ge i$,
$\phi^j(t^i c t^{-i})\in L$ in $G$. Therefore there exists a sufficiently large
$n \in \N$ such that $\phi^n(g') \in L$ and $\phi^n(h') \in L$.
Since $L$ is abelian, it follows that the commutator
$[g',h'] \in \ker\phi^n\subseteq R$, so $[g,h]=t^{-s}[g',h']t^s \in R$, for arbitrary $g,h \in N$. Therefore
the image of $N$ in $G/R$ is abelian, so $G/R$ is abelian-by-cyclic.

For the last assertion, recall that the proof of Malcev's theorem implies that $R$ is contained in the intersection of all finite-index subgroups of $G$.  Therefore it is annihilated
by every epimorphism $\psi:G\to Q$, with $Q$ finite. It follows that $Q$ is a quotient of $G/R$, so it is also abelian-by-cyclic, as claimed.
\end{proof}

\begin{cor}
Suppose that $G=G_P$ or $G=G_{k,m}$, with $m >1$, $-2m<k<2m$ and ${\rm gcd}(k,m)=1$,
is a group  from Example~\ref{ex:groups}.
Then $G$ is a \cat{} group which is not Hopfian and not uniformly non-amenable.
\end{cor}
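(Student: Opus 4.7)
The proof reduces to three verifications using results established earlier in the paper, applied to the explicit matrices of Example~\ref{ex:groups}. First, for \cat{}ness the plan is to invoke Theorem~\ref{thm:iscat0}, which requires the relevant matrix to be conjugate in $GL(2,\R)$ to an orthogonal matrix. This is immediate for $G_P=G(A_P)$ since $A_P\in SO(2)$. For $G_{k,m}$, the assumption $\gcd(k,m)=1$ places us in the ``$L'$ as large as possible'' regime so that $G_{k,m}=G(A_{k/m})$, and Proposition~\ref{prop:crit_for_2x2_matr} then applies because $\det A_{k/m}=1$, the trace $k/m$ lies in $(-2,2)$ by the hypothesis $-2m<k<2m$, and $A_{k/m}\neq\pm I$.

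Next, non-Hopficity will follow from Proposition~\ref{prop:nonhopf}. For $G_P$, take the auxiliary integer in that proposition to be $5$: then $5A_P$ is integral, $\det A_P=1\in\Z$, and $5\tr(A_P)=6$ is coprime to $5$. For $G_{k,m}$, take the auxiliary integer to be the given $m>1$: then $mA_{k/m}$ is integral, $\det A_{k/m}=1\in\Z$, and $m\tr(A_{k/m})=k$ is coprime to $m$ by hypothesis. Proposition~\ref{prop:nonhopf} then produces a surjective endomorphism $\phi\colon G\to G$ with non-trivial kernel, witnessing non-Hopficity.

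The third assertion, that $G$ is not uniformly non-amenable, is the substantive step and the main obstacle. The input will be Corollary~\ref{cor:G-by-R-metab} applied to the endomorphism $\phi$ just produced: the normal subgroup $R=\bigcup_n\ker\phi^n\lhd G$ satisfies that $G/R$ is abelian-by-cyclic (hence amenable), and every finite quotient of $G$ factors through $G/R$ and so is metabelian. From this algebraic data the plan is to invoke the relevant external result from the literature on uniform non-amenability of non-Hopfian groups; the guiding analogy is the known fact that non-Hopfian Baumslag--Solitar groups $BS(p,q)$ with $|p|\neq|q|$ are not uniformly non-amenable. Concretely, iterates $\phi^n(S_0)$ of a fixed finite generating set $S_0$ of $G$, combined with F\o{}lner sequences in the amenable quotient $G/R$, should be used to exhibit generating sets of $G$ whose Cheeger constants tend to zero. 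The hard part is precisely this passage from the algebraic data of Corollary~\ref{cor:G-by-R-metab} to the geometric conclusion of vanishing Cheeger constants: one must identify the appropriate external theorem on uniform non-amenability and verify that the iterated-endomorphism-plus-amenable-quotient structure of $G$ fits its hypotheses.
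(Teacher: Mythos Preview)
Your approach is essentially the same as the paper's. The verifications for \cat{}ness and non-Hopficity are exactly as you outline (the paper takes the \cat{} part as already established in Example~\ref{ex:groups} and does not repeat it). For the uniform non-amenability, the paper also routes through Corollary~\ref{cor:G-by-R-metab} and then cites an external result; the references you are looking for are \cite[Corollary~13.2]{ABLRSV} or \cite[Theorem~2.2]{Osin-Kazhdan}, whose hypotheses are precisely the ``surjective non-injective endomorphism with amenable direct limit $G/R$'' structure you describe, so there is no further work to do once these are named.
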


\begin{proof}  The group $G$ is non-Hopfian by Proposition~\ref{prop:nonhopf}.
The fact that $G$ is not uniformly non-amenable follows from Corollary~\ref{cor:G-by-R-metab} by \cite[Corollary~13.2]{ABLRSV} or \cite[Theorem~2.2]{Osin-Kazhdan}.
\end{proof}

The fact that the group $G_P$ is non-Hopfian can also be derived from Meier's criterion
\cite[Lemma~1]{meier}, however this criterion does not seem to apply to the groups $G_{k,m}$.  The first examples of non-Hopfian \cat{} groups were constructed by Wise in \cite{Wise-non-Hopf}.

Using the work of Andreadakis, Raptis and Varsos \cite{A-R-V} we can characterize the residual finiteness of groups $G(A,L')$ in general.

\begin{prop} \label{prop:rf-crit} Suppose that $L = \Z^n$, $A \in GL(n,\Q)$ and $L'$ is a finite-index subgroup of $L$ such that $A\,L' \subseteq L$.
Then the group $G= G(A,L')$, defined by \eqref{eq:pres_of_G(A,L')}, is residually finite if and only if  one of the following conditions holds:
\begin{itemize}
  \item[(i)] $L'=L$ or $A\,L'=L$;
  \item[(ii)] $A$ is conjugate in $GL(n,\Q)$ to a matrix from $GL(n,\Z)$.
\end{itemize}
\end{prop}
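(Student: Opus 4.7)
My plan is to invoke the criterion of Andreadakis, Raptis and Varsos \cite{A-R-V} for residual finiteness of HNN extensions of finitely generated abelian groups, since $G(A,L')$ is exactly such an extension: the base is $L = \Z^n$, the associated subgroups are $L'$ and $L'' = AL'$, and the conjugating isomorphism $\phi \colon L' \to L''$ is $c \mapsto Ac$. The main task is to translate the A-R-V criterion into our dichotomy (i)/(ii).

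For the forward direction, I would apply A-R-V's result, which characterizes residual finiteness of such HNN extensions via the dichotomy: either an associated subgroup equals the whole base (matching (i)), or $\phi$ extends to an automorphism of a finitely generated $A$-invariant subgroup of $L \otimes \Q$ containing both $L'$ and $L''$. I would then verify that the second alternative is equivalent to (ii): the existence of an $A$-invariant lattice $\Lambda \subseteq L \otimes \Q$ is equivalent to $A$ being conjugate in $GL(n,\Q)$ to an integer matrix (taking a $\Z$-basis of $\Lambda$ expresses $A$ as an element of $GL(n,\Z)$; conversely, if $A = P^{-1}A_0 P$ with $A_0 \in GL(n,\Z)$, then $\Lambda := \sum_{k \in \Z} A^k L$ is such a lattice, being commensurable with the $A$-invariant lattice $P^{-1}\Z^n$ and therefore finitely generated of rank $n$).

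For the converse, I would treat the two cases separately. In case (i), say $L' = L$, then $A \in M_n(\Z)$ and $G(A,L)$ is an ascending HNN extension of $L$; the kernel of the $t$-exponent homomorphism $G \to \Z$ is the ascending union $\bigcup_{k \geq 0} t^{-k} L t^{k}$ of copies of $L$, which is abelian. Hence $G(A,L)$ is a finitely generated metabelian group, and residual finiteness follows from P.~Hall's theorem. The symmetric case $AL' = L$ is handled by inverting the stable letter. In case (ii), one constructs $\Lambda = \sum_k A^k L$ as above, forms the polycyclic (and hence residually finite) group $\tilde G := \Lambda \rtimes_A \Z$, and deduces residual finiteness of $G(A,L')$ from A-R-V's constructive direction, or alternatively by exhibiting a finite-index subgroup of $G(A,L')$ that embeds in $\tilde G$, which then yields residual finiteness of $G(A,L')$ itself.

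The main obstacle is the accurate translation between A-R-V's algebraic criterion and our geometric conditions (i)/(ii), specifically the equivalence between ``$\phi$ extends to a lattice automorphism of an $A$-invariant $\Lambda$'' and ``$A$ is conjugate in $GL(n,\Q)$ to an integer matrix''. A secondary subtlety in case (ii) of the converse is that the most natural homomorphism $G(A,L') \to \Lambda \rtimes_A \Z$ sending $L \hookrightarrow \Lambda$ and $t \mapsto (0,1)$ need not be injective (as already happens for $BS(n,n)$, which satisfies (ii) yet is not metabelian), so one cannot simply embed $G(A,L')$ wholesale into the polycyclic group; either A-R-V's construction is invoked directly, or one passes to a suitable finite-index subgroup to secure the embedding.
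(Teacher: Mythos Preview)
Your approach is essentially the same as the paper's: both invoke \cite[Theorem~1]{A-R-V} and then translate its second alternative into condition~(ii). The paper phrases the A-R-V criterion as ``$t$ normalizes a finite-index subgroup $M\leqslant L$'' and gives a short direct argument for the equivalence with~(ii): writing $M=B\,L$ for $B\in GL(n,\Z)$ yields $B^{-1}AB\in GL(n,\Z)$; conversely, given $C^{-1}AC\in GL(n,\Z)$, one scales $C$ so that $M\coloneq (mC)\,L\subseteq L'$ and checks $A\,M=M$. Your lattice-in-$L\otimes\Q$ formulation is an equivalent repackaging, and your equivalence argument is fine.

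There is one genuine error in your converse for case~(ii). The fallback ``pass to a suitable finite-index subgroup of $G(A,L')$ that embeds in $\tilde G=\Lambda\rtimes_A\Z$'' cannot work in general: your own example $BS(2,2)$ already shows this, since there $\tilde G\cong\Z^2$ is abelian while every finite-index subgroup of $BS(2,2)$ contains a non-abelian free group (the associated subgroups are proper, so the Bass--Serre tree is not a line and the kernel of the affine action is free non-abelian). So no finite-index subgroup embeds in any polycyclic group. You must therefore take the other branch you mention and cite A-R-V directly for that implication, which is exactly what the paper does. Your treatment of case~(i) via P.~Hall's theorem on finitely generated metabelian groups is correct but unnecessary, since the paper simply absorbs both directions into the A-R-V citation.
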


\begin{proof} By \cite[Theorem 1]{A-R-V} the group $G$, defined by \eqref{eq:pres_of_G(A,L')}, is residually finite
if and only if either $L=L'$ or $L=A\,L'$ (in which case $G$
is metabelian) or $t$ normalizes a finite-index subgroup of $L$. Let us prove that the latter is equivalent
to saying that $A$ is conjugate in $GL(n,\Q)$ to a matrix from $GL(n,\Z)$.

Suppose, first, that $tMt^{-1}=M$ for some finite-index subgroup $M \leqslant L$. This implies that $M \subseteq L'$, so
$tMt^{-1}=A\,M=M$. Evidently $M=B \,L$ for some invertible matrix $B$ with integer entries,
thus $(B^{-1}\,A\, B)\,L=L$, i.e., $B^{-1}\,A\, B \in GL(n,\Z)$.

Conversely, assume that $C^{-1}\,A\,C \in GL(n,\Z)$ for some $C \in GL(n,\Q)$. Set $k \coloneq |L/L'|$ and choose $m \in \N$ so that all entries of the matrix
$B\coloneq mC$ are integers divisible by $k$. Then $B$ is invertible, so $M \coloneq B \,L$ has finite index in $L$; moreover, $M \subseteq L'$ by the choice of $m$.
Note that $B^{-1}=\frac{1}{m} C^{-1}$, so $B^{-1}\,A\, B=C^{-1}\,A\,C \in GL(n,\Z)$. It follows that $(B^{-1}\,A\, B)\,L=L$, hence
$M=A\,M=tMt^{-1}$, as required.
\end{proof}

Using the rational canonical form for matrices~\cite[Chapter~XI.4]{macbir},
condition (ii) from Proposition~\ref{prop:rf-crit} can be restated more
algebraically.
\begin{rem}\label{rem:crit_for_conj_to_GL(n,Z)} A matrix $A \in GL(n,\Q)$
is conjugate in $GL(n,\Q)$ to some matrix from $GL(n,\Z)$
if and only if $\det(A)=\pm 1$ and
all coefficients of the characteristic polynomial of
$A$ are integers.
\end{rem}

\begin{prop}\label{prop:linearity} Let $G=G(A,L')$, where $L=\Z^n$, $A \in GL(n,\Q)$ and $L'$ is a finite-index subgroup of $L$ such that $A\,L' \subseteq L$. Then $G$ is residually finite if and only if $G$ is linear over $\Q$.
\end{prop}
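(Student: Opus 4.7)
The plan is to verify each direction separately. For $(\Leftarrow)$, any linear group over $\Q$ is finitely generated linear, hence residually finite by Malcev's theorem. For $(\Rightarrow)$, suppose that $G$ is residually finite, so Proposition~\ref{prop:rf-crit} leaves two cases to treat. The main tool throughout is the affine action $\alpha\colon G \to AGL(n,\Q) \hookrightarrow GL(n+1,\Q)$ from the proof of Proposition~\ref{prop:free-by-ab-by-cyc}, sending each $c\in L$ to translation by $c$ and $t$ to multiplication by $A$. This $\alpha$ is a well-defined homomorphism (the HNN-relations $tct^{-1}=Ac$ for $c\in L'$ are satisfied in $AGL(n,\Q)$) and is clearly injective on $L$.

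In case~(i), by symmetry assume $L=L'$, so that $G$ is the ascending HNN-extension $L *_A$. When $A$ has infinite order, I would embed $G$ into the mapping torus $\tilde L \rtimes_A \Z$ with $\tilde L = \bigcup_{k\ge 0} A^{-k} L \subset L\otimes\Q$, on which $A$ now acts invertibly; the standard normal form $t^{-p}ct^q$ (with $p,q\ge 0$, $c\in L$, and $c\notin AL$ whenever $p\ge 1$) shows that $\alpha$ is injective, since such an element acts on $L\otimes\Q$ as $v\mapsto A^{q-p}v+A^{-p}c$, which is the identity only when $p=q$ and $c=0$. When $A$ has finite order, the inclusions $AL\subseteq L$ and $A^d L=L$ force $|\det A|=1$ and thus $A\in GL(n,\Z)$, so $G\cong L\rtimes_A \Z$ is virtually abelian and hence linear over $\Q$.

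In case~(ii), the proof of Proposition~\ref{prop:rf-crit} supplies a finite-index subgroup $M\leqslant L$ with $AM=M$ and $M\subseteq L'\cap L''$, so that $t$ normalizes $M$; combined with $L$ being abelian this makes $M$ normal in $G$. The quotient $G/M$ is then an HNN-extension of the finite group $L/M$ with finite associated subgroups, and hence acts on its Bass--Serre tree with finite vertex stabilizers; by the Karrass--Pietrowski--Solitar theorem it is virtually free, so linear over $\Q$. Choosing a faithful linear representation $\beta\colon G/M\hookrightarrow GL(m,\Q)$ and writing $\pi\colon G\to G/M$ for the quotient, the product map $\alpha\times(\beta\circ\pi)\colon G \to GL(n+m+1,\Q)$ has kernel $\ker\alpha\cap M$, which is trivial because $\alpha$ is injective on $L\supseteq M$.

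The main obstacle I anticipate is that in case~(ii) neither ingredient on its own is enough: the affine action $\alpha$ is typically not faithful, while the quotient map $\pi$ forgets the translation direction of $L$. What allows the product representation to succeed is the elementary observation $\ker\alpha\cap L=\{1\}$, so $\ker\alpha$ intersects every subgroup of $L$, in particular $M$, trivially.
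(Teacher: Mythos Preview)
Your proof is correct and follows essentially the same strategy as the paper: in both, the key is the product of the affine representation $\alpha$ with a second homomorphism whose kernel lies in $L$, using $\ker\alpha\cap L=\{1\}$. The only difference is in case~(i): the paper handles the ascending HNN-extension uniformly by pairing $\alpha$ with the projection $G\to\Z$ (so no case split on the order of $A$ is needed), whereas you split off the infinite-order subcase and show the stronger fact that $\alpha$ itself is already faithful there; both arguments are fine, and your case~(ii) is identical to the paper's.
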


\begin{proof}
Finitely generated linear groups are residually finite by a result of Malcev \cite{Mal}, hence we only need to prove that if $G$ is residually finite then it is isomorphic to a subgroup of $GL(m,\Q)$ for some $m \in \N$.

Suppose, first, that $t$ normalizes a finite-index subgroup $M$ of $L$. Then $M \lhd G$ and $F \coloneq G/M$ is an HNN-extension of the finite group $L/M$. Thus $F$ is finitely generated and virtually free, so it is linear over $\Q$.  Let $\beta:G \to F$ be the natural epimorphism with $\ker\beta=M$.

As before, we also have a homomorphism $\alpha:G \to AGL(n,\Q)$, which comes from the actions of $L$ and $t$ on $\Q^n$ by translations and  by $A$ respectively. The standard embedding of  $AGL(n,\Q)$ in $GL(n+1,\Q)$ shows that it is linear over $\Q$.

Evidently $\ker \alpha \cap L =\{1\}$, hence $\ker\alpha \cap \ker\beta=\{1\}$. Therefore the homomorphism $\psi:G \to  AGL_n(\Q) \times F$, defined by $\psi(g)=(\alpha(g),\beta(g))$ for all $g \in G$, is injective. It follows that $G$ is linear over $\Q$.

If $t$ does not normalize any finite-index subgroup of $L$, then, by  \cite[Theorem 1]{A-R-V}, either $L'=L$ or $A\,L'=L$. In this case $G$ is an ascending HNN-extension of $L$, which easily yields that $G$ embeds in the direct product $AGL(n,\Q) \times \Z$, where the homomorphism $G \to \Z$, onto the second factor, is given by the natural projection  sending $L$ to $0$ and $t$ to $1$. This again shows that $G$ is linear over $\Q$.
\end{proof}

\section{Free products with amalgamation}\label{sec:amalgams}

Just as a cyclic group embeds as an index two subgroup of a dihedral
group, many of the groups $G(A,L')$ can be embedded as index two
subgroups of groups expressed as free products with amalgamation.

\begin{thm} \label{thm:fpa}
Let $L = \Z^n$, let $A\in GL(n,\Q)$  and
let $L'$ be a finite-index subgroup of $L\cap A^{-1}\,L$. Suppose that
there is a matrix $R\in GL(n,\Z)$ with the following properties.

\begin{itemize}
\item[(i)] $R^2=I_n$;

\item[(ii)] $R A R= A^{-1}$;

\item[(iii)] $R\,L'=L''$, where $L'' \coloneq A\,L' \leqslant L$.

\end{itemize}

Then the group $G(A,L')$, defined by \eqref{eq:pres_of_G(A,L')},
embeds as an index two subgroup of an amalgamated free product
$K=H*_{L'=L'}H'$, where $H$ is an index $2$ overgroup of $L$ and
$H'$ is an index $2$ overgroup of~$L'$.

If $n=2$ and $A$
is conjugate to an
element of $SO(2)$ in $GL(2,\R)$ then $K$ is \cat{}.
\end{thm}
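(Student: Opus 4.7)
The proof splits into the index-two embedding and, under the stronger hypothesis, the \cat{} conclusion.

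For the embedding, my plan is to exhibit an order-two automorphism $\sigma$ of $G = G(A,L')$, defined on generators by $\sigma(c) = R\,c$ for $c \in L$ and $\sigma(t) = t^{-1}$. To verify that $\sigma$ respects the HNN relation $tct^{-1} = A\,c$ for $c \in L'$, note that by (iii) $R\,c \in L''$, so $t^{-1}(R\,c)t = A^{-1}(R\,c)$ holds in $G$; agreement with $\sigma(A\,c) = RA\,c$ then reduces to the identity $A^{-1}R = RA$ on $L'$, which rearranges to (ii). Condition (i) gives $\sigma^2 = {\rm id}$. Form $\tilde G = G \rtimes \langle r\rangle$ with $r$ acting as $\sigma$, so $G$ is an index-two subgroup of $\tilde G$.

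Next, I would set $s \coloneq rt \in \tilde G$ and check that $s^2 = 1$ using $rtr^{-1} = t^{-1}$. The natural presentation of $\tilde G$ on generators $L \cup \{t, r\}$ transforms, under the substitution $t = rs$, into the presentation
\[
\langle L,\, r,\, s \mid [L,L],\; r^2 = 1,\; s^2 = 1,\; rcr^{-1} = R\,c \; (c \in L),\; scs^{-1} = A^{-1}R\,c \; (c \in L')\rangle.
\]
This is precisely the standard presentation of the amalgamated free product $K = H *_{L'} H'$ with $H = L \rtimes \langle r\rangle$ (acting by $R$) and $H' = L' \rtimes \langle s\rangle$ (acting by $A^{-1}R$). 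The checks that $A^{-1}R$ preserves $L'$ and squares to the identity follow from (ii) and (iii): $A^{-1}R \, L' = A^{-1}\,L'' = L'$, and $(A^{-1}R)^2 = A^{-1}(RA^{-1}R) = I$ since $RA^{-1}R = A$. This identifies $\tilde G$ with $K$ and realizes $G$ as an index-two subgroup of $K$.

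For the \cat{} statement, I would equip $L \otimes \R = \R^2$ with an inner product making $A$ orthogonal (such exists by hypothesis) and first show that $R$ is orthogonal for the same product. In an orthonormal basis in which $A$ is a nontrivial rotation, a short direct computation with the constraints $R^2 = I$ and $RAR = A^{-1}$ forces $R$ to be a reflection; the degenerate cases $A = \pm I$ and $A$ of order two are handled separately by choosing a suitable inner product. It follows that $A^{-1}R$ is orthogonal too, so both $H$ and $H'$ act on $\E^2$ properly, cocompactly, and isometrically as two-dimensional crystallographic groups, and the two actions agree on the common subgroup $L'$.

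Finally, following the blueprint of the proof of Theorem \ref{thm:iscat0}, I would consider the diagonal action of $K$ on $\E^2 \times T$, where $T$ is the Bass-Serre tree of the splitting $K = H *_{L'} H'$. Because $[H:L'] = 2[L:L']$ and $[H':L'] = 2$ are both finite, $T$ is locally finite; the diagonal action is then isometric, cocompact, and properly discontinuous (each vertex stabilizer, a conjugate of $H$ or $H'$, acts properly on $\E^2$ with at most a two-element kernel). Hence $K$ acts geometrically on the \cat{} space $\E^2 \times T$ and is itself \cat{}. The main technical hurdle is the orthogonality verification for $R$, especially in the low-order cases for $A$; everything else is direct manipulation of the relations or a routine adaptation of the construction used earlier for $G(A,L')$.
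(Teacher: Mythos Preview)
Your argument is correct, and it reaches the same destination as the paper's proof by the reverse route. The paper first constructs $K=H*_{L'}H'$ (with $H'$ built from $R'\coloneq RA$, the same matrix as your $A^{-1}R$), then locates $G(A,L')$ inside $K$ as the index-two subgroup $M=\langle L,\,rr'\rangle$, using Bass--Serre theory (the action of $M$ on the tree, a two-edge fundamental domain, and the Structure Theorem) to read off a presentation of $M$ and match it with~\eqref{eq:pres_of_G(A,L')}. You instead start from $G$, produce the involutive automorphism $\sigma$, pass to $\tilde G=G\rtimes C_2$, and recognise $\tilde G$ as the amalgam via a Tietze move $t\mapsto rs$. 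Your direction is a little more elementary for the embedding step, since it avoids invoking Bass--Serre theory; the paper's direction has the advantage that it shows the result for \emph{any} index-two overgroups $H$ of $L$ and $H'$ of $L'$ realising the actions of $R$ and $R'$, not just the split ones you obtain. For the \cat{} conclusion your outline coincides with the paper's: one checks (by the same direct computation when $A$ has order $\geq 3$, and by choosing the inner product when $A=\pm I_2$) that $R$ and hence $R'=RA$ are orthogonal, and then runs the diagonal action on $\E^2\times T$ exactly as in Theorem~\ref{thm:iscat0}.
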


\begin{proof}
Define the matrix $R'\coloneq RA \in GL(n,\Q)$.  From conditions (i)--(iii) it is immediate that $R'^2=I_n$, and that $R'\,L'=L'$.
The group $H$ is defined as an extension with kernel $L$ and the
quotient cyclic of order two, generated by $\rho$, say, where
conjugation by $\rho$ acts as multiplication by the matrix $R$.
Similarly, $H'$ is defined as an extension with kernel $L'$
and the quotient cyclic of order two, generated by an element
$\rho'$ that acts on $L'$ as multiplication by $R'$. Let $K\coloneq H*_{L'=L'}H'$ be the amalgamated product of $H$ and $H'$ along their common subgroup $L'$.

Let $r \in H$ and $r' \in H'$ be some preimages of $\rho$ and $\rho'$ respectively.
Let us first check that the subgroup $M\coloneq \langle L, rr'\rangle$  has index $2$ in $K$. Evidently $K$ is generated by $M$ and $r$, and
$rLr^{-1}=L$, $r(rr')r^{-1}=r^2 r'^2 r'^{-1}r^{-1} \in L (rr')^{-1} \subseteq M$, as $r^2 \in L$ and $r'^2 \in L' \subseteq L$ in $K$.
Therefore $r M r^{-1} \subseteq M$, which implies that
$M \lhd K$ because $r^2 \in M$. It follows that $M$ is the kernel of the epimorphism
$\eta: K \to \Z/2\Z$, defined by $\eta(r)=\overline{1}$, $\eta(r')=\overline{1}$  and $\eta(L)=\{\overline{0}\}$. Thus $|K:M|=2$, as claimed.

Let $T$ be the Bass-Serre tree for the decomposition of $K$ as an amalgamated free product.
The vertices of $T$ can be identified with the cosets
$cH$ or $cH'$, $c \in K$, and the edges correspond to the cosets $cL'$, $c \in K$. Let $u$, $v$ be the vertices of $T$ corresponding to $H$ and $H'$ respectively; let $e$ be the edge of $T$ corresponding to $L'$.
Then $e$ joins $u$ with $v$ in $T$, and this edge, together with the endpoints, is a fundamental domain for the action of $K$ on $T$.
As a  fundamental domain for the induced action of $M$ on $T$ we can take the union $e \cup e'$, where $e'\coloneq r'^{-1} \,e$
with the vertices $u=e_-$, $v=e_+=e'_+$ and $w\coloneq e'_-=r'^{-1}\,e_-$.
Note that $w=r'^{-1}\,u=r'^{-1}r^{-1}\,u \in M\,u$, since $r\,u=u$ and $rr' \in M$.
Thus the action of $M$ on $T$ has two orbits of vertices and two orbits of edges, and
the quotient graph $M \backslash T$ consists of two vertices and two edges joining them.

Observe that the $M$-stabilizers of $u$, $v$, $e$ and $e'$ are $L$, $L'$, $L'$ and $r'^{-1}L'r'=L'$ respectively. We can now apply the Structure Theorem of Bass-Serre theory \cite[Section I.5.4]{Serre}
to find that $M$ has the following presentation:
\begin{equation}\label{eq:pres_of_M}
M=\langle L,t \,\|\, tct^{-1}=(r'^{-1}r^{-1})^{-1}c(r'^{-1}r^{-1}),~\forall\,c \in L'\rangle.
\end{equation}
It remains to observe that for each $c \in L'$ we have $(r'^{-1}r^{-1})^{-1}c(r'^{-1}r^{-1})=(rr')c(rr')^{-1}=A\,c$, hence the presentation
\eqref{eq:pres_of_M}, of $M$, coincides with the presentation \eqref{eq:pres_of_G(A,L')}, of $G(A,L')$. Thus $M \cong G(A,L')$.

Now suppose that $n=2$ and $A\in SO(2)$.  The cases $A=\pm I_2$ are easily dealt with.
Assuming that $A\in SO(2)$ has order at least $3$, a direct computation shows that any matrix $R \in SL^{\pm}(2,\R)$ satisfying $RA=A^{-1} R$ is a reflection matrix from $O(2)$.
Hence whenever $A$ preserves some inner product on $L\otimes\R$,
that inner product is also preserved by $R$ and $R'$.  Once one knows
this, showing that $K$ is a \cat{} group is similar to the
proof of Theorem~\ref{thm:iscat0}.  There are isometric actions of
the group on $\E^2$ and on $T$, the Bass-Serre tree for the given
decomposition as a free product with amalgamation.  Furthermore,
the diagonal action on $\E^2\times T$ is
properly discontinuous, cocompact and by isometries.  (If we metrize
$T$ so that each edge has length $1/2$, the space $\E^2\times T$ with
its action of $G(A,L')$ is equivariantly isometric to the space
constructed in Theorem~\ref{thm:iscat0}.)
\end{proof}

\begin{rem}
In the case when $L'= L\cap A^{-1}\,L$, one has that
$R\,L'= R\,L\cap R A^{-1}\,L = L\cap A R\,L= L\cap A\,L=A\,L'$,
and so in this case condition~(iii) follows from conditions
(i)~and~(ii).
\end{rem}

The possible isomorphism types of the group $H$ arising in
Theorem~\ref{thm:fpa} depend on the action of $R$ on $L$.
To make this precise, we need a further definition.

Let $L$ be a free abelian group of rank two, and let $\rho$ be an
involution of $L$ that reverses the orientation of $L$.  It
can be shown that there are two conjugacy classes of such involutions
in $GL(2,\Z)$, depending on whether $L$
has a basis which is permuted by $\rho$ or has a basis of eigenvectors
for $\rho$.  We refer to the first as the {\sl rhombic} case and to the
second as the {\sl rectangular} case.  If one
chooses an inner product on $L \otimes\R$ that is preserved
by the action of the involution, the rhombic case corresponds to the
existence of a basis for $L$ consisting of vectors of the same
length swapped by $\rho$, and the rectangular case corresponds to the
existence of an orthogonal basis for $L$ consisting of eigenvectors
for $\rho$.

\begin{prop} Let $H$ be a group expressed as an extension with kernel
$L\cong \Z^2$ and quotient cyclic of order two, generated by $\rho$.
If the action of $\rho$ on $L$ is rhombic then $H$ is isomorphic to
the wreath product $\Z\wr C_2$.  If the action of $\rho$ on $L$
is rectangular then $H$ is isomorphic either to the direct product
$\Z\times D_\infty$ or to the Klein bottle group $BS(1,-1)$.
\end{prop}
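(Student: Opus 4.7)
My approach is to classify such extensions via the standard low-dimensional cohomological invariant for extensions with cyclic quotient. Fix any lift $t\in H$ of $\rho$; then $t^2\in L$, and the identity $t\cdot t^2\cdot t^{-1}=t^2$ shows that $t^2$ lies in the $\rho$-fixed subgroup $L^\rho$. If I replace the lift by $tl$ for some $l\in L$, then, writing $L$ additively, $(tl)^2=t^2+l+\rho(l)$, so the class of $t^2$ in the quotient $L^\rho/(1+\rho)L$, which is $H^2(\langle\rho\rangle;L)$, is an invariant of $H$ as an extension. Conversely, every such class is realized by some extension, and two extensions with the same class yield isomorphic groups. It therefore suffices to compute this cohomology group in each of the two cases and identify the resulting groups.

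In the rhombic case, take a basis $e_1,e_2$ of $L$ swapped by $\rho$. Then $L^\rho=\Z(e_1+e_2)$, and $(me_1+ne_2)+\rho(me_1+ne_2)=(m+n)(e_1+e_2)$, so $(1+\rho)L=\Z(e_1+e_2)=L^\rho$. Hence $H^2(\langle\rho\rangle;L)=0$ and $H$ is unique up to isomorphism. The split extension $L\rtimes\langle\rho\rangle$ in which $\rho$ permutes the basis is by definition the restricted wreath product $\Z\wr C_2$, proving the first assertion.

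In the rectangular case, take a basis $f_1,f_2$ of eigenvectors for $\rho$ with eigenvalues $+1$ and $-1$. Then $L^\rho=\Z f_1$ while $(1+\rho)L=2\Z f_1$, so $H^2(\langle\rho\rangle;L)\cong\Z/2$, giving exactly two isomorphism classes of extensions. If the invariant is trivial, I can choose $t$ with $t^2=1$; then $H$ decomposes as the internal direct product of $\langle f_1\rangle\cong\Z$ and the infinite dihedral subgroup $\langle f_2,t\mid t^2=1,\,tf_2t^{-1}=f_2^{-1}\rangle$, so $H\cong\Z\times D_\infty$. If the invariant is non-trivial, I can adjust the lift so that $t^2=f_1$; then $H$ admits the presentation
\[\langle f_1,f_2,t\mid [f_1,f_2]=1,\ tf_1t^{-1}=f_1,\ tf_2t^{-1}=f_2^{-1},\ t^2=f_1\rangle,\]
and eliminating $f_1=t^2$ makes the two relations $tf_1t^{-1}=f_1$ and $[f_1,f_2]=1$ redundant, the latter because $tf_2t^{-1}=f_2^{-1}$ forces $t^2f_2t^{-2}=f_2$. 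What remains is $\langle f_2,t\mid tf_2t^{-1}=f_2^{-1}\rangle\cong BS(1,-1)$, the Klein bottle group.

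The one step demanding care is the Tietze reduction of the non-split rectangular presentation to the standard one-relator presentation of $BS(1,-1)$; everything else reduces to short calculations inside the $\rho$-module $L$. A consistency check that the two rectangular extensions are genuinely non-isomorphic is immediate from their abelianizations, $\Z\times(\Z/2)^2$ versus $\Z\times\Z/2$, so the cohomological count indeed corresponds to a count of isomorphism classes of groups.
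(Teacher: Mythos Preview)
Your proof is correct and follows essentially the same approach as the paper's: both classify the extensions by $H^2(\langle\rho\rangle;L)$, observe that this vanishes in the rhombic case and has order two in the rectangular case, and then identify the resulting groups. The paper is terser---it notes that in the rhombic case $L$ is a free $\Z\langle\rho\rangle$-module, which kills the cohomology immediately, and simply asserts the identifications in the rectangular case---whereas you compute $L^\rho/(1+\rho)L$ by hand and carry out the Tietze transformations explicitly; your abelianization check distinguishing the two rectangular groups is a pleasant extra not in the paper.
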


\begin{proof}
Group extensions with quotient cyclic of order two and kernel a
given $C_2$-module $V$ are classified by $H^2(C_2;V)$.  In the
rhombic case $L$ is a free $\Z \langle \rho\rangle$-module and so
there is only the split extension $\Z\wr C_2$.  In the rectangular
case $H^2(\langle \rho\rangle;L)$ has order two.  The split
extension gives $\Z\times D_\infty$ and the non-split extension
gives the Klein bottle group $BS(1,-1)$.
\end{proof}

\begin{ex}\label{ex:embed}
Next we consider how this embedding result applies to the groups
$G_{k,m}$, $G_P$ and $G'_P$ which were defined in Example~\ref{ex:groups}.

Let $\{a,b\}$ be the standard basis for $L=\Z^2$.  For each $k$ and $m$,
the matrix $A_{k/m}$, defined in \eqref{eq:A_k/m}, is inverted by the matrix
$R= \begin{pmatrix} 0& 1 \\ 1 & 0 \\ \end{pmatrix}$.  If $L'$ is the submodule
of $L$ spanned by $a$ and $mb$, then $RA\,a=a$ and
$RA\,mb=-mb+ka$.  A calculation shows that the action of $R$
on $L=\Z^2$ is rhombic and the action of $RA$ on $L'$ is rhombic in
the case when $k$ is odd and is rectangular in the case when $k$ is
even.  Thus in each case the group $G_{k,m}$ embeds as an index two
subgroup of a group $H*_{L'=L'}H'$ as in the statement of
Theorem~\ref{thm:fpa}, where  $H\cong \Z\wr C_2$.  If $k$ is odd, then $H'$
is also isomorphic to $\Z\wr C_2$, whereas if $k$ is even, then
$H'$ may be taken to be either $\Z\times D_\infty$ or $BS(1,-1)$.

Since the matrix $A_P$, defined in \eqref{eq:A_P}, is already in
$SO(2)$, there are four possible choices for $R \in O(2) \cap
GL(2,\Z)$: $\begin{pmatrix} 0& 1 \\ 1 & 0  \end{pmatrix}$ or
$\begin{pmatrix} 0& -1 \\ -1 & 0  \end{pmatrix}$ (rhombic case), or
$\begin{pmatrix} 1& 0 \\ 0 & -1 \end{pmatrix}$ or $\begin{pmatrix}
  -1& 0 \\ 0 & 1  \end{pmatrix}$ (rectangular case).  In the case
when $L'=\langle (2,-1)^T,(1,2)^T\rangle$ is as large as possible, a
calculation shows that the action of $R A_P$ on $L'$ and the action
of $R$ on $L$ both have the same type.  Thus we obtain five
potentially different amalgamated products of the form $H*_{L'=L'}H'$
that contain $G_P$ as an index two subgroup, including one
torsion-free group in which each of $H$ and $H'$ is isomorphic to
$BS(1,-1)$.  None of the possible choices for $R$ swaps $(5\Z)^2$ and
its image under $A_P$, so we cannot construct a group of this form
containing $G'_P$ as an index two subgroup.
\end{ex}

\begin{ex} Let us give an explicit presentation for the torsion-free amalgamated product
$K=H*_{L'=L'}H'$, where  $A=A_{P}=\begin{pmatrix} 3/5& -4/5 \\ 4/5 & 3/5  \end{pmatrix}$, $L'=\langle (2,-1)^T,(1,2)^T\rangle$
and $R=\begin{pmatrix} -1& 0 \\ 0 & 1 \\ \end{pmatrix}$ (rectangular case). Then the group $G_P$ from Example~\ref{ex:groups} embeds in $K$ as a subgroup of index $2$, and
$K \cong BS(1,-1)*_{\Z^2} BS(1,-1)$ has the presentation
\[ \langle a,r,c,s\,\|\, rar^{-1}=a^{-1},\,\, scs^{-1}=c^{-1},\,\, a^2r^{-2}=c,\,\, ar^4=s^2 \rangle,
\]
which can be transformed to the $2$-generator and $2$-relator presentation
\[ \langle r,s\,\|\, r^{5}s^{-2}r^{3}s^{-2}=1,\,\, s^5r^{-10}s^3r^{-10}=1 \rangle.
\]
\end{ex}

\begin{rem}\label{rem:am_prod_non-Hopf}
Suppose that the hypotheses of both Proposition~\ref{prop:nonhopf}
and Theorem~\ref{thm:fpa} apply, and we are in the split case (i.e., when $r^2=r'^2=1$, $H=L \rtimes \langle r\rangle$
and $H'=L'\rtimes \langle r'\rangle$). Then the endomorphism
$\phi$ constructed
in the proof of Proposition~\ref{prop:nonhopf} extends to the amalgamated product
$H*_{L'=L'}H'$ via $\phi(r)=r$ and $\phi(r')=r'$.  This gives examples of free
products of virtually $\Z^2$ groups amalgamating abelian subgroups
that are non-Hopfian, and, in particular, not residually
finite.
\end{rem}

\begin{cor}\label{cor:am_prod_non-biaut}
In addition to the assumptions of Theorem \ref{thm:fpa}, suppose that
$A$ has infinite order. Then the amalgamated product $K=H*_{L'=L'}H'$
is not virtually biautomatic.
\end{cor}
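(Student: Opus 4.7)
The plan is to deduce this corollary directly from Theorem~\ref{thm:notvirtbiaut} and the embedding provided by Theorem~\ref{thm:fpa}, using the standard fact that biautomaticity is inherited by (and from) finite-index subgroups.

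First I would recall that Theorem~\ref{thm:fpa} realises $G \coloneq G(A,L')$ as an index two subgroup of $K$. Suppose for contradiction that $K$ is virtually biautomatic, so there exists a finite-index subgroup $K_0 \leqslant K$ admitting a biautomatic structure. Set $G_0 \coloneq K_0 \cap G$. Since $|K:K_0|<\infty$ and $|K:G|=2$, we have $|K:G_0|<\infty$, hence $|G:G_0|<\infty$.

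Now $G_0$ is a finite-index subgroup of the biautomatic group $K_0$, so by \cite[Theorem~4.1.4]{Eps} $G_0$ itself is biautomatic. This exhibits $G=G(A,L')$ as a virtually biautomatic group. Since $A$ has infinite order by hypothesis, this contradicts Theorem~\ref{thm:notvirtbiaut}. Therefore $K$ cannot be virtually biautomatic.

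There is no real obstacle here: the corollary is essentially a transfer of the non-virtual-biautomaticity of $G(A,L')$ along the index two embedding, using that the class of virtually biautomatic groups is closed under commensurability (finite-index over- and undergroups), which follows from \cite[Theorem~4.1.4]{Eps}.
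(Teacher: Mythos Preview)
Your proof is correct and follows essentially the same approach as the paper: both invoke Theorem~\ref{thm:notvirtbiaut} for $G(A,L')$, use the index-two embedding from Theorem~\ref{thm:fpa}, and appeal to \cite[Theorem~4.1.4]{Eps} to transfer non-virtual-biautomaticity along finite index. You have simply spelled out the intersection-with-a-finite-index-subgroup step more explicitly than the paper does.
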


\begin{proof} Indeed, the group $G=G(A,L')$ is not virtually biautomatic by
Theorem~\ref{thm:notvirtbiaut} and, since $G$ has index $2$ in $K$,
$K$ cannot be virtually biautomatic by \cite[Theorem 4.1.4]{Eps}.
\end{proof}

Baumslag, Gersten, Shapiro and Short \cite{BGSS} proved that a free
product of two finitely generated free abelian groups amalgamating any
subgroup is always automatic. However, Bridson \cite[Remark~3]{Bridson-growth} noted that amalgamated products of finitely generated virtually abelian groups can be much wilder; in particular they may not even be asynchronously automatic.
Corollary~\ref{cor:am_prod_non-biaut} together with Example~\ref{ex:embed} produce examples of similar spirit.

\section{Closing remarks and open questions}\label{sec:open-q}
Motivated by the question whether all automatic groups are biautomatic, we tried checking automaticity of the groups $G_P$ and $G_{k,m}$ from Example~\ref{ex:groups}. As part of this effort, we used the GAP implementation of Holt's KBMAG software~\cite{kbmag,gap} to search for automatic structures on the groups $G_{k,2}$ as defined
in Example~\ref{ex:groups} for various choices of $k$.  In the
cases $k\in\{-2,0,2\}$ it easily found an automatic structure, while
for $k\in\{-3,-1,1,3,4,5\}$ it failed to find any automatic structure.

Throughout the rest of this section we assume that $n \in \N$, $L=\Z^n$, $A \in GL(n,\Q)$ and $L'$ is a finite-index subgroup of $L$ such that $A\,L' \subseteq L$. The questions below concern the groups $G(A,L')$, defined in \eqref{eq:pres_of_G(A,L')}.

\begin{question} Can the group $G(A,L')$ be automatic when $A$ has infinite order?
\end{question}

As explained in the introduction, the groups $G(A,L')$ are higher-dimensional analogues of Baumslag-Solitar groups $BS(k,l)$, which cannot be subgroups of biautomatic groups unless $|k|=|l|$ \cite[Proposition~6.7]{G-S}. This yields the following question, suggested to the authors by K.-U. Bux:

\begin{question} Suppose that $G(A,L')$ embeds as a subgroup in a biautomatic group. Does it follow that $A$ has finite order?\footnote{A positive answer to this question has recently been announced by M. Valiunas, \texttt{{arXiv:2104.13688}}.}
\end{question}

In \cite[Question~43]{FHT} Farb, Hruska and Thomas  asked whether every group which acts properly and cocompactly on a \cat{} piecewise Euclidean $2$-complex is
biautomatic. This question remains open, though one can show that our groups $G_P$ and $G_{k,m}$, $-2m<k<2m$, admit geometric actions on  \cat{} piecewise Euclidean $3$-complexes.

Andreadakis, Raptis and Varsos \cite{A-R-V-2} described a necessary and sufficient criterion for an HNN-extension of $\Z^n$ to be Hopfian. However, it is not obvious how to check this criterion for any given group $G(A,L')$. It would be interesting to characterize the Hopficity of $G(A,L')$ only in terms of the matrix $A$ and the finite-index subgroup $L'$ of $L=\Z^n$
(in the spirit of the residual finiteness criterion from Proposition~\ref{prop:rf-crit}).

\begin{question}
Classify the groups $G(A,L')$ up to isomorphism\footnote{This has recently been completed by M. Valiunas in \texttt{arXiv:2011.08143}.}, commensurability and quasi-isometry.
\end{question}

We expect that the classification up to isomorphism should be straightforward, while the classification up to commensurability will be more challenging (it has only recently been
completed by Casals-Ruiz, Kazachkov and Zakharov \cite{CR-K-Z} for the Baumslag-Solitar groups, which correspond to the case $n=1$). For $n=1$ the quasi-isometry classification of Baumslag-Solitar groups was done by Whyte~\cite{Whyte-BS}. Suppose that $A$ is conjugate to an orthogonal matrix in $GL(n,\R)$. Then, in view of  Corollary~\ref{cor:quasi-isom}, $G(A,L')$ is quasi-isometric to the direct product of $\Z^n$ with a free group. So, for a fixed $n \in \N$, there are exactly $2$ quasi-isometry classes for such $A$: in the first class $L'=L$, $A$ has finite order and $G(A,L')$ is virtually $\Z^{n+1}$, and in the second class $L'$ is a proper subgroup of $L$ and $G(A,L')$ is quasi-isometric to $\Z^n \times F_2$, where $F_2$ is the free group of rank $2$. In the latter case $G(A,L')$ is commensurable with $\Z^n \times F_2$ if and only if $A$ has finite order.

Finally, an observant reader may notice that, according to Theorem~\ref{thm:ashtalk}, if the group $G(A,L')$ is both \cat{} and residually finite then it is biautomatic. Thus our methods leave the following question open.

\begin{question}
Is every residually finite \cat{} group biautomatic?
\end{question}


\begin{thebibliography}{99}

\bibitem{alobri} J.M. Alonso, M.R. Bridson, Semihyperbolic groups.
\emph{Proc. London Math. Soc. (3)} \textbf{70} (1995), 56--114.

\bibitem{Amrhein} A. Amrhein, Characterizing Biautomatic Groups. Preprint (2021). \texttt{arXiv:2105.07509}

\bibitem{A-R-V-2}  S. Andreadakis, E. Raptis, D. Varsos, Hopficity of certain HNN-extensions.  \emph{Comm. Algebra} \textbf{20} (1992), no. 5, 1511--1533.

\bibitem{A-R-V}  S. Andreadakis, E. Raptis, D. Varsos, Residual finiteness and Hopficity of certain HNN extensions.
\emph{Arch. Math. (Basel)} \textbf{47} (1986), no. 1, 1--5.

\bibitem{ABLRSV}  G.N. Arzhantseva, J. Burillo, M. Lustig, L. Reeves, H. Short, E. Ventura,  Uniform non-amenability. \emph{Adv. Math.} \textbf{197} (2005), no. 2, 499--522.

\bibitem{BGSS}  G. Baumslag, S.M. Gersten, M. Shapiro, H. Short, Automatic groups and amalgams. \emph{J. Pure Appl. Algebra} \textbf{76} (1991), no. 3, 229--316.


\bibitem{Baum-Sol} G. Baumslag, D. Solitar,
Some two-generator one-relator non-Hopfian groups.
\emph{Bull. Amer. Math. Soc.} \textbf{68} (1962), 199--201.

\bibitem{Bestvina-list} M. Bestvina, Questions in Geometric Group Theory. \url{http://www.math.utah.edu/~bestvina/eprints/questions-updated.pdf}

\bibitem{Bridson-combings} M.R. Bridson, Regular combings, nonpositive
curvature and the quasiconvexity of abelian subgroups.  \emph{J. Pure
Appl. Algebra} \textbf{88} (1993), no. 1-3, 23--35.

\bibitem{Bridson-growth}  M.R. Bridson, On the growth of groups and automorphisms. \emph{Internat. J. Algebra Comput.} \textbf{15} (2005), no. 5-6, 869--874.

\bibitem{bri-gil} M.R. Bridson, R. Gilman, Formal language theory
and the geometry of $3$-manifolds. \emph{Comment. Math. Helv.} \textbf{71}
(1996), no. 4, 525--555.

\bibitem{brihae} M.R. Bridson, A. Haefliger, Metric spaces of
non-positive curvature.  \emph{Springer-Verlag, Berlin}, 1999.  xxi+643
pp.



\bibitem{capmon} P.-E. Caprace, N. Monod,
Isometry groups of non-positively curved spaces: discrete subgroups.
\emph{J. Topol.} \textbf{2} (2009), no. 4, 701--746.

\bibitem{capmon-corr} P.-E. Caprace, N. Monod, Erratum and addenda to ``Isometry groups of non-positively curved spaces: discrete subgroups''. Preprint (2019). \texttt{arXiv:1908.10216}

\bibitem{CR-K-Z} M. Casals-Ruiz, I. Kazachkov, A. Zakharov, Commensurability of Baumslag-Solitar groups. Preprint (2019). \texttt{arXiv:1910.02117}




\bibitem{Elder} M. Elder, Automaticity, almost convexity and falsification by fellow traveler properties of some finitely generated groups. PhD thesis, \emph{The University of Melbourne (2000).}

\bibitem{Eps} D.B.A. Epstein, J.W. Cannon, D.F. Holt, S.V.F. Levy,
  M.S. Paterson, W.P. Thurston,
Word processing in groups. \emph{Jones and Bartlett Publishers,
  Boston, MA,} 1992. xii+330 pp.


\bibitem{kbmag}  D.B.A. Epstein, D.F. Holt, S.E. Rees, The use of
Knuth-Bendix methods to solve the word problem in automatic
groups.  \emph{J. Symbolic Computation} \textbf{12} (1991), 397--414.

\bibitem{FHT} B. Farb, C. Hruska, A. Thomas, Problems on
automorphism groups of nonpositively curved polyhedral complexes
and their lattices.  A chapter in `Geometry, rigidity and group
actions' Chicago Lectures in Math., \emph{Univ. Chicago Press,
Chicago}, 2011, 515--560.




\bibitem{G-S} S.M. Gersten, H.B. Short, Rational subgroups of
  biautomatic groups.
\emph{Ann. Math. (2)} \textbf{134} (1991), no. 1, 125--158.

\bibitem{grhe} J.R.J. Groves, S.M. Hermiller,
Isoperimetric inequalities for soluble groups.
\emph{Geom. Dedicata} \textbf{88} (2001), no. 1--3, 239--254.


\bibitem{huangprytula} J. Huang, T. Prytu\l{a}, Commensurators
of abelian subgroups in \cat{} groups. \emph{Math. Z.} \textbf{296} (2020), no. 1-2, 79--98.

\bibitem{krop} P.H. Kropholler, Baumslag-Solitar groups and some
other groups of cohomological dimension two.  \emph{Comment. Math.
Helv.} \textbf{65} (1990), 547--558.

\bibitem{L-S}  R.C. Lyndon, P.E. Schupp, Combinatorial group theory.
Ergebnisse der Mathematik und ihrer Grenzgebiete, Band 89. \emph{Springer-Verlag, Berlin-New York}, 1977. xiv+339 pp.

\bibitem{macbir} S. Mac~Lane, G. Birkhoff, Algebra, Second
edition.  \emph{Macmillan Inc, New York}, 1979. xv+586 pp.

\bibitem{Mal} A. Malcev,
On isomorphic matrix representations of infinite groups. (Russian)
\emph{Mat. Sbornik} \textbf{8 (50)} (1940), 405--422.

\bibitem{mccammond} J. McCammond, Algorithms, Dehn functions and
automatic groups. Preprint (2007). \url{https://sites.google.com/a/scu.edu/rscott/pggt}

\bibitem{meier} D. Meier, Non-Hopfian groups.
\emph{J. London Math. Soc. (2)} \textbf{26} (1982), no. 2, 265--270.

\bibitem{Min-fibre} A. Minasyan, On conjugacy separability of fibre products. \emph{Proc. Lond. Math. Soc. (3)} \textbf{115} (2017), no 66,  1170--1206.

\bibitem{Min-VR} A. Minasyan, Virtual retraction properties in groups. \emph{IMRN}, to appear.  \\ \texttt{arXiv:1810.02654}

\bibitem{Neu-Sha}  W.D. Neumann, M. Shapiro, Equivalent automatic structures and their boundaries. \emph{Internat. J. Algebra Comput.} \textbf{2} (1992), no. 4, 443--469.


\bibitem{nibree} G.A. Niblo, L. Reeves, The geometry of
cube complexes and the complexity of their fundamental groups.
\emph{Topology} \textbf{37} (1998), no. 3, 621--633.

\bibitem{osajdaprytula} D. Osajda, T. Prytu\l{a}, Classifying
spaces for families of subgroups for systolic groups.
\emph{Groups Geom. Dyn.} \textbf{12} (2018), 1005--1060.

\bibitem{Osin-Kazhdan} D.V. Osin,
Kazhdan constants of hyperbolic groups. (Russian)
\emph{Funktsional. Anal. i Prilozhen.} \textbf{36} (2002), no. 4, 46--54. English translation in \emph{Funct. Anal. Appl.} \textbf{36} (2002), no. 4, 290--297.

\bibitem{prytula} T. Prytu\l{a}, Bredon cohomological dimension for
virtually abelian stabilizers for \cat{} groups.   \emph{J.  Topol. Anal.}, to appear. \texttt{arXiv:1810.08924}

\bibitem{Serre} J.-P. Serre, Trees. Translated from the French original by J. Stillwell. Corrected 2nd printing of the 1980
English translation. Springer Monographs in Mathematics, \emph{Springer-Verlag, Berlin,} 2003. x+142 pp.

\bibitem{gap} The GAP Group,
\emph{GAP--Groups, Algorithms and Programming},
Version~4.9.2 (2018). \url{https://www.gap-system.org/}

\bibitem{Whyte-BS} K. Whyte, The large scale geometry of the higher
Baumslag-Solitar groups.  \emph{Geom. Funct. Anal.} \textbf{11}
(2001), no. 6, 1327--1343.

\bibitem{Wise-non-Hopf} D.T. Wise,
A non-Hopfian automatic group.
\emph{J. Algebra} \textbf{180} (1996), no. 3, 845--847.

\bibitem{Wise-thesis}  D.T. Wise, Non-positively curved squared complexes, aperiodic tilings and non-residually finite groups. Ph.D. thesis, \emph{Princeton University (1996).}


\end{thebibliography}
\end{document}